\documentclass[11pt]{amsart}

\usepackage{amsmath,amsthm,amscd,amsfonts,wasysym,amssymb,epic,eepic,bbm,mathtools,multicol,enumitem,mathrsfs,comment}
\usepackage{tikz}
\usepackage{tikz-cd}
\usetikzlibrary{arrows.meta,calc,cd,math}
\usepackage[pagebackref,colorlinks=true,linkcolor=blue,citecolor=blue]{hyperref}
\usepackage[noabbrev]{cleveref}
\usepackage{MnSymbol}

\newcounter{dummy}
\makeatletter
\newcommand\myitem[1][]{\item[#1]\refstepcounter{dummy}\def\@currentlabel{#1}}
\makeatother

\allowdisplaybreaks

\newtheorem{thm}{Theorem}[section]
\newtheorem{cor}[thm]{Corollary}

\newtheorem*{thm*}{Theorem}
\newtheorem*{prop*}{Proposition}
\newtheorem{prop}[thm]{Proposition}
\theoremstyle{remark}
\newtheorem{remark}[thm]{Remark}

\newcounter{remarkscounter}

\newcommand{\A}{\mathbb{A}}
\newcommand{\C}{\mathbf{C}}

\newcommand{\Z}{\mathbb{Z}}

\newcommand{\Ad}{\mathrm{Ad}}
\newcommand{\Adj}{\mathrm{Adj}}

\newcommand{\GL}{\mathrm{GL}}

\newcommand{\Hom}{\mathrm{Hom}}
\newcommand{\Id}{\mathrm{Id}}

\newcommand{\Lie}{\mathrm{Lie}}

\newcommand{\Mat}{\mathrm{Mat}}

\newcommand{\SL}{\mathrm{SL}}

\newcommand{\Spec}{\mathrm{Spec}}

\newcommand{\Stab}{\mathrm{Stab}}

\newcommand{\Tr}{\mathrm{Tr}}

\renewcommand{\O}{\mathrm{O}}

\renewcommand{\dim}{\mathrm{dim}}

\renewcommand{\ker}{\mathrm{ker}\,}

\newcommand{\tr}{\mathrm{tr}}

\makeatletter
\newcommand{\rprod}{%
  \DOTSB             
  \mathop{\mathpalette\rprod@\relax}\slimits@}
\newcommand{\rprod@}[2]{%
  \ooalign{$\m@th#1\prod$\cr$\m@th#1\coprod$\cr}}
\makeatother

\newcommand{\quash}[1]{}

\theoremstyle{definition}
\newtheorem{defn}[thm]{Definition}
\newtheorem{lemma}[thm]{Lemma}

\newenvironment{psmatrix}
  {\left(\begin{smallmatrix}}
  {\end{smallmatrix}\right)}

\numberwithin{equation}{subsection}

\allowdisplaybreaks

\begin{document}
\linespread{1.2}

\title{Quasi-Classical Braverman--Kazhdan Intertwiners via Quiver Varieties}

\author{Nikolay Grantcharov, Aaron Slipper}
\address{Department of Mathematics\\
Duke University\\
Durham, NC 27708}
\email{aaron.slipper@duke.edu}
\address{Department of Mathematics\\
University of Georgia\\
Athens, GA 30607}
\email{nikolayg@uga.edu}

\begin{abstract}
We show that Braverman--Kazhdan normalized intertwiners for $G=\SL_n(\C)$ have a quasi-classical incarnation governed by type $A$ quiver varieties. More precisely, for standard parabolic subgroups $P$ and $P'$ with conjugate Levi subgroups, we construct $\SL_n\times L^{\mathrm{ab}}$-equivariant isomorphisms $\Phi(P,P'):\overline{T^*(\SL_n/[P,P])}^{\mathrm{aff}}\rightarrow\overline{T^*(\SL_n/[P',P'])}^{\mathrm{aff}}$ between the affinizations of the cotangent bundles of the corresponding Braverman--Kazhdan spaces, and we prove that these isomorphisms satisfy Coxeter relations. The construction uses $\SL$-gauge analogues of Lusztig--Maffei--Nakajima reflection functors, thereby extending Wang's quiver-variety realization of the quasi-classical Gelfand--Graev action from the Borel case to arbitrary parabolic subgroups. In this way, we complete the quasi-classical Braverman--Kazhdan intertwiner story for $\SL_n(\C)$ and obtain a systematic source of non-isomorphic varieties whose affinized cotangent bundles are isomorphic.
\end{abstract}

\maketitle

\setcounter{tocdepth}{1}
\tableofcontents

\section{Introduction}

Let $G$ be a reductive group, $W$ its Weyl group, and $U\subset G$ a maximal unipotent subgroup. Although the basic affine space $G/U$ does not itself carry a natural $W$-action, several related objects \textit{do} admit such a structure. The Gelfand--Graev action is an action of $W$ on the algebra of differential operators $D(G/U)$.\footnote{The original note on the Gelfand--Graev action appears to be unpublished; see \cite{GinzburgKazhdan2022}.} The ``Horospherical transforms" \cite{GelfandGraevPiatetskiiShapiro1969} are an action of $W$ on $L^2(G/U)$ (see also \cite{BK:basic:affine}). Finally, the quasi-classical Gelfand-Graev action is an action of $W$ on the Poisson algebra $\C[T^*(G/U)]$ \cite{GinzburgRiche2015,GinzburgKazhdan2022}. 

These three incarnations of the Gelfand--Graev action are best viewed not as a symmetry of the variety $G/U$, but as a Fourier-type symmetry of its differential, analytic, and symplectic avatars. In the simplest case $G=\SL_2$, where $G/U\simeq \A^2\setminus\{0\}$, these three avatars are (respectively) the classical Fourier involution on the Weyl algebra of $D_{\A^2}$, the symplectic Fourier transform on $L^2(\A^2)$, and the classical symplectic Fourier involution on $\A^4$.

Braverman and Kazhdan extended this picture from basic affine space to the spaces $G/[P,P]$, where $P$ is a parabolic subgroup of $G$. More precisely, they constructed normalized intertwining operators between the spaces $L^2(G/[P,P])$ as $P$ ranges over the parabolic subgroups of $G$ with a fixed Levi subgroup \cite{BK:normalized,Getz:Liu:BK}. The Gelfand--Graev action is recovered in the Borel case $P=B$, while the general parabolic case gives a larger system of intertwiners attached to varying Braverman--Kazhdan spaces $G/[P,P]$. These operators are part of the broader circle of ideas relating generalized Fourier transforms, relative trace formulas, and Langlands functoriality.

The present paper investigates the quasi-classical form of these normalized intertwiners for $G=\SL_n$, over the ground field $\C$. Our starting point is recent work of Wang \cite{Wang2021GelfandGraev}. In type $A$, Wang showed that the quasi-classical Gelfand--Graev action on $\overline{T^*(\SL_n/U)}^{\mathrm{aff}}$ can be realized geometrically by modifying the reflection functors of Lusztig, Maffei, and Nakajima \cite{Lusztig2000QuiverWeyl,Maffei2002RemarkQuiverWeyl,Nakajima2003ReflectionFunctors} on quiver varieties: the usual $\GL_{\mathbf{v}}$-gauge group is replaced by an $\SL_{\mathbf{v}}$-gauge group. This gives a striking bridge between two a priori different subjects: the Gelfand--Graev action and the geometry of Nakajima quiver varieties.\footnote{Gannon and Williams have also given a Coulomb-branch interpretation of both the quasi-classical and differential-operator Gelfand--Graev actions \cite{GannonWilliamsCoulomb}.}

Our main result extends Wang's construction from the Gelfand--Graev action in the Borel case to the full system of Braverman--Kazhdan intertwiners for $\SL_n$ attached to arbitrary associate parabolic subgroups $P$. More precisely, let $P$ and $P'$ be standard parabolic subgroups of $\SL_n$ with conjugate Levi subgroups $L$ and $L'$. We construct an $\SL_n\times L^{\mathrm{ab}}$-equivariant\footnote{The assumption of conjugate Levi subgroups identifies their abelianizations $L^{\mathrm{ab}}:=L/[L,L]$.} isomorphism
\begin{equation*}
\Phi(P,P'):\overline{T^*(\SL_n/[P,P])}^{\mathrm{aff}}\rightarrow\overline{T^*(\SL_n/[P',P'])}^{\mathrm{aff}}
\end{equation*}
between the affinizations of the cotangent bundles of the Braverman--Kazhdan spaces $\SL_n/[P,P]$ and $\SL_n/[P',P']$. Furthermore, these isomorphisms satisfy Coxeter relations and are therefore the quasi-classical analogues of the normalized Braverman--Kazhdan intertwiners. We note that \textit{a priori} it is not even obvious that the two affinized cotangent bundles are abstractly isomorphic;\footnote{This is particularly striking because the underlying Braverman--Kazhdan spaces $\SL_n/[P,P]$ are \textit{not} isomorphic; see Section~\ref{sectionNonIso}.} this fact is a corollary of our construction of $\Phi$. 

Extending Wang's construction to general parabolics is highly nontrivial. Wang specializes to the quiver with dimension vector $\mathbf{v}=(1,2,\ldots,n-1)$ and framing vector $\mathbf{w}=(0,\ldots,0,n)$. The Weyl group preserves the dimension vector $\mathbf{v}$, giving rise to an action on a single variety. We work with \textit{arbitrary} dimension vectors $\mathbf{v}=(d_1,\ldots,d_{k-1})$ such that $0<d_1<\cdots<d_{k-1}<n$, and with framing vector $\mathbf{w}=(0,\ldots,0,n)$. Reflections typically alter our dimension vectors.

This leads to several complications. First, Wang introduced a torsion-1 constraint which allowed him to modify the usual LMN $\GL_{\mathbf{v}}$-reflection functors to $\SL_{\mathbf{v}}$-quiver varieties. However, when the dimension vector is arbitrary, the torsion-1 constraint causes Coxeter relations to break. We introduce a new \textit{signed} torsion convention, depending on the dimension vector and the reflection, which recovers the Coxeter relations and gives rise to well-defined compositions of reflection functors.

Second, and more importantly, Wang critically utilizes several pleasant algebro-geometric properties of the quiver variety with dimension vector $\mathbf{v} = (1, 2, \ldots, n-1)$. These do \textit{not} hold in our more general setting. In particular, the $0$-fiber of the moment map\footnote{Often called the ``shell'' \cite{HerbigSchwarzSeaton2024} in the literature.} is not normal; the quotient map to the Hamiltonian reduction is not flat; and the resulting Hamiltonian reductions are not necessarily normal\footnote{This is in marked contrast with the case of a $\GL_{\mathbf{v}}$-gauge group, where normality is guaranteed by the central result of \cite{CrawleyBoevey2003Normality}.} (see Section~\ref{ExamplesSection}). Most devastatingly: the full $\SL_{\mathbf{v}}$-Hamiltonian reductions for reflected quivers are in general non-isomorphic varieties (see Appendix~\ref{1323ex}). Thus reflection functors between affinized cotangent bundles cannot arise from naive isomorphisms between the associated quiver varieties. To circumvent these obstacles, we construct our reflection functors as isomorphisms between the affinizations of suitable dense open subsets of the two Hamiltonian reductions.\footnote{Importantly, the codimension of the complement of these open subsets can be $1$, precluding any Hartogs-type argument, even when the Hamiltonian reductions are normal varieties; see Appendix~\ref{1323ex}.}

The main results of the paper are summarized below.

\begin{thm*}[Theorem~\ref{classical reflection functors}]
Let $P$ be a standard parabolic subgroup of $\SL_n$ whose Levi subgroup $L$ has $k$ blocks. Let $w\in S_k$, and let $P^w$ be the corresponding standard parabolic subgroup with Levi subgroup $L^w:=w\cdot L$, obtained by permuting the blocks of $L$ by $w$. Then there exists an isomorphism of varieties
\begin{equation*}
\Phi_w(P,P^w):\overline{T^*(\SL_n/[P,P])}^{\mathrm{aff}}\rightarrow\overline{T^*(\SL_n/[P^w,P^w])}^{\mathrm{aff}}.
\end{equation*}
\end{thm*}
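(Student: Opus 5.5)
Our approach is to realize both sides as affinizations of open pieces of $\SL_{\mathbf{v}}$-Hamiltonian reductions of type $A$ framed quivers, following Wang, and then to build $\Phi_w$ as a composite of simple reflection isomorphisms.

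\textbf{Step 1: quiver model.} Let the blocks of $L$ have sizes $n_1,\dots,n_k$. Set $d_i=n_1+\cdots+n_i$ and $\mathbf{v}=(d_1,\dots,d_{k-1})$, with framing $\mathbf{w}=(0,\dots,0,n)$. We first identify $\overline{T^*(\SL_n/[P,P])}^{\mathrm{aff}}$ with the coordinate ring of the $\SL_{\mathbf{v}}$-Hamiltonian reduction $\mu^{-1}(0)/\!\!/\SL_{\mathbf{v}}$, or more precisely with the affinization of a suitable dense open subset $U_{\mathbf{v}}$ of it. The idea is this. The open locus where all maps $V_i\to V_{i+1}$ and $V_{k-1}\to \C^n$ are injective is a free $\SL_{\mathbf{v}}$-quotient. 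It is identified with $T^*(\SL_n/[P,P])$: an injective chain of maps is a flag together with volume forms on the successive pieces, that is, a point of $\SL_n/[P,P]$, and the dual arrows subject to $\mu=0$ give the cotangent fibre. This identification is $\SL_n\times L^{\mathrm{ab}}$-equivariant, where $L^{\mathrm{ab}}\cong \GL_{\mathbf{v}}/\SL_{\mathbf{v}}$ acts through the residual torus. We may take this identification as available from the earlier sections.

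\textbf{Step 2: simple reflections.} Since $S_k$ is generated by adjacent transpositions $s_i$, it suffices to construct
\[
\Phi_{s_i}:\overline{T^*(\SL_n/[P,P])}^{\mathrm{aff}}\to \overline{T^*(\SL_n/[P^{s_i},P^{s_i}])}^{\mathrm{aff}}.
\]
Swapping blocks $i$ and $i+1$ replaces $d_i$ by $d_{i-1}+d_{i+1}-d_i$, which is exactly the LMN reflection at vertex $i$ with zero parameter. Following Maffei and Nakajima, at vertex $i$ we form the complex
\[
V_i\to V_{i-1}\oplus V_{i+1}\to V_i .
\]
Its middle-cohomology-type replacement $V_i'=\ker$ (respectively $\mathrm{coker}$) has the new dimension. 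On the open locus where the first map is injective and the second surjective, the new representation is determined up to $\GL(V_i')$.

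To descend to the $\SL$-gauge we must choose a volume form on $V_i'$. We do this from the short exact sequence
\[
0\to V_i\to V_{i-1}\oplus V_{i+1}\to V_i'\to 0,
\]
using the given volume forms on $V_{i\pm1}$ and $V_i$, twisted by the signed torsion convention (a sign depending on $\mathbf{v}$ and $i$). This gives an $\SL_{\mathbf{v}}$-to-$\SL_{\mathbf{v}'}$ equivariant isomorphism between open subsets $U\subset\mu^{-1}(0)$ and $U'\subset\mu'^{-1}(0)$. It is compatible with $\SL_n$ and, after the appropriate identification, with $L^{\mathrm{ab}}$, so it descends to an isomorphism $U/\SL_{\mathbf{v}}\cong U'/\SL_{\mathbf{v}'}$.

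\textbf{Step 3: passing to affinizations.} The key point is to show that these open subsets have the same affinization as the whole cotangent bundle. Concretely, the rings of regular functions $\C[U/\SL_{\mathbf{v}}]$ and $\C[T^*(\SL_n/[P,P])]$ must agree inside a common function field. We expect this to be the main obstacle. Hartogs is unavailable: the complement can have codimension one, and the reductions need not be normal.

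Our plan is to show that both open sets contain a common big open subset $T^*(\SL_n/[P,P])^\circ$, on which all relevant maps are injective or surjective. We then show that every regular function on this subset extends to the whole cotangent bundle. To do so we use the $\SL_n\times L^{\mathrm{ab}}$-isotypic decomposition: functions are sums of weight vectors for the $L^{\mathrm{ab}}$-grading. For each weight we check extension across the divisorial complement by a degree/valuation argument along each boundary divisor. This should follow from equivariance and the explicit description of the invariants as minors of the arrows, computed via the first fundamental theorem for $\SL$.

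Granting this, $\Phi_{s_i}$ induces an isomorphism of the affinizations. We then define $\Phi_w$ as the composite along a reduced word for $w$. For the bare existence statement of the theorem, any choice of reduced word suffices; independence of the choice is the separate Coxeter-relation result, which uses the signed torsion convention.
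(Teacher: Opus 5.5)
Your Steps 1 and 2 match the paper: the DKS identification $m^{-1}_{\mathrm{inj}}(0)/\SL_{\mathbf v}\cong T^*(\SL_n/[P,P])$, and an $\SL$-gauge zipper normalized by a torsion condition. You are also right that Step 3 is the crux. But the argument you sketch there cannot work. You assert that Hartogs is unavailable, and you propose to show that \emph{every} regular function on a dense open subset extends across the divisorial part of its complement. When the complement really is divisorial, this is false.

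Concretely, the complement of $\Lambda^{\mathrm{surj}}_{P,i}$ in $m^{-1}_{\mathrm{inj}}(0)$ is the locus where $\lambda_i=0$ and the induced map $\overline{\overline{\beta}}_i:\C^{d_{i+1}}/\operatorname{Im}\alpha_i\to\C^{d_i}/\operatorname{Im}\alpha_{i-1}$ fails to be surjective. This map goes from a space of dimension $s_{i+1}$ to one of dimension $s_i$. If $s_{i+1}<s_i$, the complement is the whole hypersurface $\{\lambda_i=0\}$. Then $1/\lambda_i$ is an $\SL_{\mathbf v}$-invariant regular function on $\Lambda^{\mathrm{surj}}_{P,i}$ that does not extend to $T^*(\SL_n/[P,P])$, and the image on the $P'$ side is likewise just $\{\lambda_i\neq0\}$. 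No isotypic or valuation argument can rule out such a function.

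Your proposal never ties the direction of the reflection to the block sizes, so it runs into exactly this case. The codimension-one complements and the non-normality you cite concern the full reductions $m^{-1}(0)/\!\!/\SL_{\mathbf v}$ (e.g.\ $X_{p\neq0}\subset X$ in the $\SL_3$ example). Those reductions never need to enter the argument.

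The missing idea is to orient each simple reflection by block size. When $s_i\le s_{i+1}$, the same description shows that the complement of $\Lambda^{\mathrm{surj}}_{P,i}$ is the preimage of a determinantal locus inside $\{\lambda_i=0\}$. It has codimension $2+s_{i+1}-s_i\geq 2$ in the smooth variety $m^{-1}_{\mathrm{inj}}(0)$. In this case the paper proceeds as follows:
\begin{enumerate}
\item It uses the zipper $0\to\C^{d_i'}\to\C^{d_{i-1}}\oplus\C^{d_{i+1}}\to\C^{d_i}\to0$, with $\operatorname{in}_i(B)$ surjective on the source.
\item It applies ordinary Hartogs on $m^{-1}_{P,\mathrm{inj}}(0)$ to get $\C[\Lambda^{\mathrm{surj}}_{P,i}]=\C[m^{-1}_{P,\mathrm{inj}}(0)]$.
\item It identifies the invariant rings through the $\SL_{d_i'}$- and $\SL_{d_i}$-torsor structures of the zipper.
\end{enumerate}
For $s_i>s_{i+1}$, it \emph{defines} $\Phi_i$ as the inverse of the functor for the reversed pair.

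Strictly speaking, on the $P'$ side the zipper lands only in the open subset of $m^{-1}_{P',\mathrm{inj}}(0)$ where $\lambda_i\neq0$ or the induced map $\C^{d_{i+1}}/\operatorname{Im}\alpha_i'\to\C^{d_i'}/\operatorname{Im}\alpha_{i-1}'$ is injective; compare $\{B\neq0\}\subset\mathcal U$ in the $\SL_3$ example. Its complement again has codimension $2+s_{i+1}-s_i$, so Hartogs applies on that side too. With this orientation you need no extension argument beyond Hartogs. Your last paragraph, composing along an arbitrary reduced word for bare existence, then goes through.
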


The next result records the equivariance property expected of a quasi-classical Braverman--Kazhdan intertwiner.

\begin{prop*}[Propositions~\ref{WT equivariance} and~\ref{SL_n equivariance}]
Let $(P,P^w)$ be a pair of associate parabolics as in Theorem~\ref{classical reflection functors}. Then the reflection functor $\Phi_w:=\Phi_w(P,P^w)$ is equivariant with respect to the standard $\SL_n\times L^{\mathrm{ab}}$-action (Equation~\ref{SL_n times L^ab action}): for $[B]\in\overline{T^*(\SL_n/[P,P])}^{\mathrm{aff}}$,
\begin{align*}
\Phi_w([B]\cdot h)&=\Phi_w([B])\cdot w(h), \qquad \text{for all } h\in L^{\mathrm{ab}},\\
\Phi_w(g\cdot[B])&=g\cdot\Phi_w([B]), \qquad \text{for all } g\in\SL_n.
\end{align*}
\end{prop*}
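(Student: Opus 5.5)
The equivariance has to be checked at the level where $\Phi_w$ is built, namely the quiver data. The plan is to reduce both identities to the case of a single simple reflection $s_i\in S_k$ and then verify them on representatives.

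\textbf{Reduction to simple reflections.} $\Phi_w$ is defined by composing the elementary reflection functors $\Phi_{s_i}$ along a reduced word for $w$, and Theorem~\ref{classical reflection functors} shows the result is independent of the word. Equivariance therefore reduces to the generators. The $\SL_n$-statement passes directly through compositions. For the $L^{\mathrm{ab}}$-statement, suppose $\Phi_{s_i}([B]\cdot h)=\Phi_{s_i}([B])\cdot s_i(h)$ for every dimension vector. Then, if $w=s_{i_1}\cdots s_{i_r}$, the twists compose to $w(h)$, because the actions of $L^{\mathrm{ab}}\cong L^{w,\mathrm{ab}}$ are identified compatibly with the permutation of blocks. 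So it suffices to treat $w=s_i$.

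\textbf{The $\SL_n$-action.} Under the identification of $\overline{T^*(\SL_n/[P,P])}^{\mathrm{aff}}$ with (the affinization of an open subset of) the $\SL_{\mathbf{v}}$-Hamiltonian reduction, $\SL_n$ acts on the framing space $W_{k-1}=\C^n$. The reflection $s_i$ changes only the data at vertex $i$. For $i<k-1$ the framing maps are untouched, so the formula for $\Phi_{s_i}$ commutes trivially with the $\SL_n$-action. For $i=k-1$ the new $V_i'$ is the kernel (or cokernel) of a map built out of the framing maps $a,b$. The new arrows are obtained by restricting and projecting $a,b$, which is $\GL(W)$-natural, and the torsion normalization involves only determinants on $V$-spaces. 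Hence it is $\SL_n$-invariant. This gives the $\SL_n$-equivariance on the dense open subset, and it extends to the affinization by functoriality.

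\textbf{The $L^{\mathrm{ab}}$-action.} $L^{\mathrm{ab}}$ is the residual torus $\GL_{\mathbf{v}}/\SL_{\mathbf{v}}$, acting through $\det$ at each vertex (Equation~\ref{SL_n times L^ab action}). For $h=(t_j)$ acting on $[B]$, the coordinates $t_j$ at vertices $j\neq i$ visibly pass through $\Phi_{s_i}$. At vertex $i$ the new space satisfies $V_i'\cong \ker(V_{i-1}\oplus V_{i+1}\to V_i)$ (or the dual construction). This yields the determinant line identity $\det V_i'\cong\det V_{i-1}\otimes\det V_{i+1}\otimes(\det V_i)^{-1}$. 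Acting by $g_i\in\GL(V_i)$ with $\det g_i=t_i$ therefore induces on the reflected side the character $t_{i-1}t_{i+1}t_i^{-1}$ at vertex $i$. Translated into block coordinates of $L^{\mathrm{ab}}$ (the blocks have sizes $d_j-d_{j-1}$), this is exactly swapping the $i$-th and $(i+1)$-st blocks, i.e.\ $s_i(h)$.

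\textbf{Main obstacle.} The hard step is the signed torsion convention. The map $\Phi_{s_i}$ carries the reflected data to a representative with a prescribed torsion/sign normalization, e.g.\ a chosen trivialization of $\det V_i'$. I must check that, after acting by $h$, the normalizing scalar transforms by the character predicted above, with the sign matching in both the $\ker$ and $\mathrm{coker}$ cases. I would first verify this on the explicit formula for $\Phi_{s_i}$ on the open locus where the relevant maps are injective or surjective, tracking how the normalizing determinant scales. Then I would extend from the open subset to the affinization, using that both sides are morphisms agreeing on a dense open set.
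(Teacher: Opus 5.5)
Your proposal is essentially the paper's proof: for each simple reflection the paper checks that the zipper $Z_i^{\tau_i}(P,P')$ is stable under $(B,B')\mapsto(B\cdot h,\,B'\cdot\sigma_i(h))$ and under the diagonal $\SL_n$-action. The torsion check there is exactly your determinant-line identity $c_ic_i'=c_{i-1}c_{i+1}$ (Equation~\eqref{torsion scaling identity}), so the torsion is rescaled by a character that equals $1$. The sign worries in your last paragraph therefore do not arise, and the case $s_i>s_{i+1}$ is handled simply by inverting.

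One correction to your $\SL_n$ step: for $i=k-1$ the torsion does involve the framing, because the middle term of \eqref{SES zipper} is $\C^{d_{k-2}}\oplus\C^n$. It is preserved because $S=\Id\oplus g$ has $\det S=\det g=1$, not because only $V$-spaces enter; for $g\in\GL_n$ the torsion would be multiplied by $\det g$.
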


Finally, the reflection functors are normalized: they satisfy the Coxeter relations.

\begin{thm*}[Theorem~\ref{braid relation theorem}]
Let $\sigma_i,\sigma_j,\sigma_r\in S_k$ denote simple reflections with $j=i+1$ and $r$ arbitrary. Then the reflection functors satisfy the $S_k$ Coxeter relations:
\begin{align*}
\Phi_i(P^{\sigma_i},P)\Phi_i(P,P^{\sigma_i})&=\mathrm{Id},\\
\Phi_r(P^{\sigma_i},P^{\sigma_r\sigma_i})\Phi_i(P,P^{\sigma_i})&=\Phi_i(P^{\sigma_r},P^{\sigma_i\sigma_r})\Phi_r(P,P^{\sigma_r}),\qquad \text{(if } |i-r|>1\text{)},\\
\Phi_i(P^{\sigma_j\sigma_i},P^{\sigma_i\sigma_j\sigma_i})\Phi_j(P^{\sigma_i},P^{\sigma_j\sigma_i})\Phi_i(P,P^{\sigma_i})
&=
\Phi_j(P^{\sigma_i\sigma_j},P^{\sigma_j\sigma_i\sigma_j})\Phi_i(P^{\sigma_j},P^{\sigma_i\sigma_j})\Phi_j(P,P^{\sigma_j}).
\end{align*}
\end{thm*}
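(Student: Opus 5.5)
The Coxeter relations are identities between isomorphisms of affine varieties, and every variety involved is reduced and irreducible. So it suffices to check each identity on a dense open subset. The plan is to reduce to explicit formulas for the reflection functors on the dense open loci used to define them. Recall that $\Phi_i(P,P^{\sigma_i})$ is built as the affinization of an isomorphism between dense open subsets $U\subset \mu^{-1}(0)/\!\!/\SL_{\mathbf v}$ and $U'\subset \mu^{-1}(0)/\!\!/\SL_{\mathbf v'}$, where $\mathbf v'=s_i\mathbf v$. I would take the opens to be the loci where the relevant maps $a_i\colon V_i\to V_{i-1}\oplus V_{i+1}$ and $b_i$ at vertex $i$ are injective or surjective, so that the LMN exact sequence
\[
0\to V_i\to V_{i-1}\oplus V_{i+1}\to V_i'\to 0
\]
(with framing $0$ at interior vertices) makes sense. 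On these loci the quotient is a geometric quotient, and the map can be written on representatives in $\mu^{-1}(0)$: $V_i'$ is the cokernel, the new arrows are the induced maps, and the new $\SL$-gauge trivialization is fixed by the signed torsion convention. Via the earlier identification of the Hamiltonian reduction with $\overline{T^*(\SL_n/[P,P])}^{\mathrm{aff}}$, I will work on the intersection of all the opens appearing in a given word. This intersection is still dense, since each open is dense and each reflection is an isomorphism on its domain; the preimage of a dense open under an isomorphism of dense opens is dense.

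For $\Phi_i(P^{\sigma_i},P)\Phi_i(P,P^{\sigma_i})=\Id$: on the $\GL$-level, applying the LMN construction twice at vertex $i$ returns a representation canonically isomorphic to the original, because the double cokernel recovers $V_i$. The only content is the determinant trivialization. Here I use that the signed torsion for $(\mathbf v,i)$ and for $(s_i\mathbf v,i)$ are opposite, so the induced volume forms on $V_i$ agree. This is a one-line determinant computation from the exact sequence, using multiplicativity of $\det$ on short exact sequences.

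For $|i-r|>1$, the two reflections modify disjoint vertices whose neighborhoods do not interact: $V_{i\pm1}$ and $V_{r\pm1}$ are untouched by the other reflection, except when $|i-r|=2$, where they share a neighbor, but that neighbor's space itself is unchanged. So the constructions commute on the nose, and the torsion signs depend only on local data, which is unchanged. For the braid relation with $j=i+1$, I would follow Nakajima/Maffei. On the open locus, both sides send a representation to one obtained by replacing $(V_i,V_j)$ by the spaces determined by the complex
\[
V_{i-1}\oplus V_{j+1}\to\cdots,
\]
that is, the unique representation of the $A_2$-subquiver $\{i,j\}$ with prescribed neighbors. I would check that both triple compositions yield canonically isomorphic $\GL$-data. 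Equivalently, I would invoke the known $\GL$-level braid relations of LMN on the stable loci, which are compatible with our opens.

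The main obstacle is the $\SL$-refinement of the braid relation: showing that the trivializations of $\det V_i$ and $\det V_j$ produced along the two paths agree, and not merely up to a sign or root of unity. I would first compute each side's determinant trivialization as a product of determinants of the connecting maps in the successive short exact sequences. Then I would show that the accumulated signs match. The signed torsion convention was designed precisely for this, so it reduces to checking a sign identity in terms of $\mathbf v$, $s_i\mathbf v$, $s_js_i\mathbf v$, and so on. If a residual scalar remains, I would use $\SL_n\times L^{\mathrm{ab}}$-equivariance (Propositions~\ref{WT equivariance} and~\ref{SL_n equivariance}) together with continuity on the connected open set to show that it is a constant character. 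I would then evaluate it at a convenient point, for example a regular semisimple-type representative, to see that it equals $1$. The identity then extends from the dense open to the affinizations, since both sides are morphisms of affine varieties that agree on a dense set.
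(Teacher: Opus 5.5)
Your reduction to dense opens, the commuting relation for $|i-r|>1$, and your appeal to Maffei's $\GL$-level hexagon construction all match the paper. The gap is the braid relation's actual content, its $\SL$-refinement, which you never carry out. You say it ``reduces to checking a sign identity,'' but you neither derive that identity nor specify the torsion constants, and in the paper this is the whole proof. The paper uses Lemma~\ref{torsion of maffei SES}: splicing Maffei's sequences $0\to V\to W\oplus U\to Y\to 0$ and $0\to U\to X\oplus Y\to Z\to 0$ gives $0\to V\to W\oplus X\to Z\to 0$ with torsion $(-1)^{d_Yd_Z+d_Y}\tau(V,Y)\tau(U,Z)$. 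Comparing the spliced sequences along the two sides of the hexagon yields two constraints on the constants $\tau_i(P,P')$, namely \eqref{First torsion equation} and \eqref{Second torsion equation}. These are then verified in $\mathbb{F}_2$ for the specific choice $\tau_i(P,P')=(-1)^{\ell_i(P)}$ of Definition~\ref{torsion choice}. The relation genuinely depends on this choice: the paper notes that Wang's convention $\tau\equiv 1$ breaks the Coxeter relations for general dimension vectors. So an argument that leaves the constants unspecified cannot close. Your fallback does not avoid this. The discrepancy between the two triple composites is a rescaling at the two reflected vertices by matrices of constant determinant, i.e.\ the action of a fixed element of $L^{\mathrm{ab}}$ determined by the torsion constants. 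Since $L^{\mathrm{ab}}$ acts freely on $T^*(\SL_n/[P,P])$, ``evaluating at a convenient point'' is exactly the sign computation you have not done.

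Your argument for the quadratic relation is also incorrect as stated. For $s_i>s_{i+1}$ the paper \emph{defines} $\Phi_i(P,P^{\sigma_i}):=\Phi_i(P^{\sigma_i},P)^{-1}$, because the direct zipper construction is unavailable there: $\Lambda_i^{\mathrm{surj}}$ has codimension-one complement and the Hartogs step fails. So the relation is tautological unless $s_i=s_{i+1}$. In that case $P^{\sigma_i}=P$ and the same constant $\tau_i$ is used in both directions. What makes it work is that, on $\lambda_i\neq0$, the splittings $-\operatorname{out}_i(B)/\lambda_i$ of $\operatorname{in}_i(B)$ and $\operatorname{out}_i(B')/\lambda_i$ of $\operatorname{in}_i(B')$ show the reversed sequence has torsion $(-1)^{d^2+d}\tau_i=\tau_i$. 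So the two torsions are equal, not ``opposite.'' Moreover, suppose you build both directions from constant-torsion zippers when $d_i\neq d_i'$. The same computation gives the reversed sequence torsion $(-1)^{d_id_i'+d_i}\lambda_i^{d_i-d_i'}\tau_i$, which is not constant. Your double reflection then differs from the identity by a $\lambda_i$-dependent rescaling at vertex $i$, whatever signs you choose.
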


These results complete the quasi-classical Braverman--Kazhdan intertwiner story for $G=\SL_n$ and make the relationship with quiver varieties completely explicit in type $A$. Moreover, they also provide a systematic source of examples of non-isomorphic varieties whose affinized cotangent bundles are isomorphic. We study this phenomenon in Section~\ref{sectionNonIso}.

This is the quasi-classical analogue of the question of when two non-isomorphic varieties can have isomorphic algebras of Grothendieck differential operators. Examples of $D$-isomorphic varieties have heretofore been somewhat \textit{ad hoc}: in dimension $1$, the only such examples come from curves homeomorphic to $\A^1$ with cusps at some finite collection of points; see \cite{BerestWilson2004Differential}. In higher dimension, the only known example of this phenomenon\footnote{Except for those trivially built from the previous examples.} was due to Levasseur, Smith, and Stafford (LSS), and is representation-theoretic in nature \cite[Section~9]{LevasseurSmithStafford1989Joseph}.\footnote{Morita equivalences of categories of $\mathcal D_X$-modules were studied by Ben-Zvi and Nevins in \cite{BenZviNevinsCuspsDModules}.} As we show in Section~\ref{quadricconex}, the reflection functors in the example of the quadric cone produce the quasi-classical analogue of LSS's mysterious isomorphism. Thus, our results explain this apparent coincidence and suggest a geometric mechanism for constructing further examples.

We close this introduction by placing the construction in a broader context. The question of when two varieties have Poisson-isomorphic cotangent bundles is closely related to the question of polarizations of a symplectic variety. In the relative Langlands program of Ben-Zvi, Sakellaridis, and Venkatesh, such polarizations are expected to give nonlinear analogues of the Fourier transform \cite[Section~10]{BenZviSakellaridisVenkatesh2024}, which are in turn expected to play an important role in Langlands functoriality \cite{Ngo:Hankel}. The Gelfand--Graev action and the Braverman--Kazhdan intertwiners are among the central representation-theoretic examples of this philosophy. Schematically,
\[
\begin{tikzpicture}[
    >=Stealth,
    font=\footnotesize,
    box/.style={
        draw,
        rounded corners=4pt,
        align=center,
        inner sep=5pt
    },
    lab/.style={
        font=\scriptsize,
        fill=white,
        inner sep=1pt,
        align=center
    }
]

\node[box, text width=4.2cm] (P) at (0,0) {%
\textbf{Poisson algebra isomorphism}\\
$\C[T^*X]\xrightarrow{\sim}\C[T^*Y]$
};

\node[box, text width=3.5cm] (D) at (-5.0,-2.8) {%
\textbf{Algebra isomorphism}\\
$D(X)\xrightarrow{\sim}D(Y)$
};

\node[box, text width=4.0cm] (L) at (5.0,-2.8) {%
\textbf{Intertwiner/Fourier transform}\\
$L^2(X(F))\xrightarrow{\sim}L^2(Y(F))$
};

\draw[->, shorten <=4pt, shorten >=4pt]
    ($(P.south)+(-1.0,0)$) --
    ($(D.north)+(0.9,0)$)
    node[pos=.47, above left=1pt, lab] {Deformation\\quantization};

\draw[->, shorten <=4pt, shorten >=4pt]
    ($(P.south)+(1.0,0)$) --
    ($(L.north)+(-0.9,0)$)
    node[pos=.47, above right=1pt, lab] {Geometric\\quantization};

\draw[<->, shorten <=6pt, shorten >=6pt]
    (D.east) -- (L.west);

\end{tikzpicture}
\]
\noindent where $F$ is a local field. Conjectural generalizations of these ideas have appeared in \cite{GetzEtAl_ModulationGroups_2025}.

The differential-operator side of this picture is beginning to emerge for Braverman--Kazhdan spaces. In very recent work \cite{Hsu2026WeylAlgebrasBKSpaces}, Hsu has constructed, for $P$ a maximal parabolic, the corresponding intertwiner\footnote{Called the ``Fourier transform'' in loc. cit.} $D(G/[P,P])\rightarrow D(G/[P^{\mathrm{op}},P^{\mathrm{op}}])$ and established the basic theory of Weyl algebras on $G/[P,P]$. In our type $A$ setting, this corresponds to a quiver with two nodes, and it represents the quantization of our reflection functor. Thus the results of this paper provide the quasi-classical foundation for the expected quantized Braverman--Kazhdan intertwiners in type $A$.

Finally, we mention a related parabolic analogue of the Gelfand--Graev action, in which one studies intertwiners for $\overline{T^*(G/U_P)}^{\mathrm{aff}}$ rather than $\overline{T^*(G/[P,P])}^{\mathrm{aff}}$.\footnote{The dichotomy between the quasi-affine varieties $G/[P,P]$ and $G/U_P$ and their singular affinizations also arises in the theory of geometric Eisenstein series \cite{LaumonEisenstein,BravermanGaitsgoryGeometricEisenstein}, with the two forms of Drinfeld's compactification \cite{BFGMDrinfeldCompactificationsIC,BFGMDrinfeldCompactificationsICErratum}. In fact, Laumon's compactification, which is a small resolution of Drinfeld's compactification, may be thought of as a global function-field version of the quiver resolution $V\rightarrow \overline{\SL_n/[P,P]}^{\mathrm{aff}}$; see Corollary~\ref{generalized base affine quiver} and \cite{LaumonEisenstein,Kuznetsov1997LaumonResolution}. This explains why the intertwiners we construct here are analogous to the \textit{normalized} geometric Eisenstein series of the Geometric Langlands Program.} Tom Gannon has pointed out to us that these intertwiners may be constructed using the work of Gannon and Ginzburg \cite{GannonCotangentGUP,GinzburgDerivedSatakeSymplecticDuality} for general $G$, and \cite{GannonWebsterFunctorialityCoulomb} for $G=\SL_n$. This places the present construction inside a broader program relating Braverman--Kazhdan intertwiners, quiver varieties, Coulomb branches, and nonlinear Fourier transforms.

\subsection*{Acknowledgments}

We are grateful to Tom Gannon, Jayce Getz, Victor Ginzburg, Bill Graham, Boming Jia, Daniil Klyuev, Zhilin Luo, Dan Nakano and Xiangsheng Wang for numerous helpful discussions. We are especially thankful to Travis Schedler for pointing out that quiver varieties with an $\SL$-gauge group need not be normal. 

In preparing this paper, we used ChatGPT 5.5 for proofreading and running CAS tests on our calculations, especially in the Examples section. We also used it to find a solution, checked in Proposition \ref{torsion choice satisfies braid signs}, to Equations~\eqref{First torsion equation} and~\eqref{Second torsion equation}, thus completing the verification of Coxeter relations. All text is human written, and we assume full responsibility for its content.\footnote{In this AI acknowledgment, we follow recent practice; see, for example, \cite{TaoLocalBernsteinLebesgue}.}

\section{Reflection Functors for DKS Quiver Varieties}

In this section, we generalize Wang's construction \cite{Wang2021GelfandGraev} of reflection functors for quiver varieties with an $\SL$-gauge group, dimension vector $(1, 2, \ldots, n-1)$ and framing vector $(0, \ldots, 0, n)$, to type $A$ quivers whose dimension vector forms an arbitrary strictly increasing sequence. In other words, we remove Wang's assumption that $\mathbf{w}=C\mathbf{v}$ for $C$ the Cartan matrix. In this more general setting, the reflection functors are no longer isomorphisms between the corresponding Nakajima quiver varieties, since these varieties are not isomorphic in general; see Example~\ref{1323ex} for a simple counterexample. Instead, we obtain isomorphisms over smaller smooth open subsets, and then extend these to isomorphisms of their affinizations. In some cases, the complements of these open subsets have codimension only $1$ in the corresponding Nakajima quiver variety, so the Hartogs-principle/codimension-$\geq 2$ argument used to extend such isomorphisms to the full quiver varieties does not apply here, unlike in Wang's setting. We call the affinizations obtained in this way \textit{DKS quiver varieties}; see Definition~\ref{DKS quiver variety definition} for the precise construction.

Consider a dimension vector $\mathbf{v}=(d_1,\dots,d_{k-1})$ and framing vector $\mathbf{w}=(0,\dots,0,d_k)$ with $d_1<d_2<\cdots<d_k=n$. Let $Q$ be the quiver
\begin{equation}\label{DKS quiver}
Q=
\begin{tikzcd}
\overset{d_1}{\bullet} \arrow[r] & \overset{d_2}{\bullet} \arrow[r] & \cdots \arrow[r] & \overset{d_{k-1}}{\bullet} \arrow[r] & {\overset{d_k}{\boxed{\vphantom{d_1}\,\bullet\,}}}
\end{tikzcd}
\end{equation}
For simplicity of notation, we will denote such a quiver by the diagram
\[
Q=(d_1)-(d_2)-\cdots-(d_{k-1})-[d_k].
\]

Let
\[
V:=\bigoplus_{i=1}^{k-1}\Hom(\C^{d_i},\C^{d_{i+1}}),\qquad T^*V=V\times V^*,
\]
where we identify
\[
V^*=\bigoplus_{i=1}^{k-1}\Hom(\C^{d_{i+1}},\C^{d_i})
\]
using the trace pairing. Let
\[
\SL_{\mathbf{v}}=\prod_{i=1}^{k-1}\SL_{d_i}
\]
be the gauge group. The gauge group acts on $V$ and $V^*$ as follows. Given $(\alpha_i)\in V$, $(\beta_i)\in V^*$, and $(g_i)\in\SL_{\mathbf{v}}$, we define
\begin{equation*}
(g_1,\dots,g_{k-1})\cdot(\alpha_1,\dots,\alpha_{k-1})
=
(g_2\circ\alpha_1\circ g_1^{-1},\, g_3\circ\alpha_2\circ g_2^{-1},\,\dots,\, \alpha_{k-1}\circ g_{k-1}^{-1})
\end{equation*}
and
\begin{equation*}
(g_1,\dots,g_{k-1})\cdot(\beta_1,\dots,\beta_{k-1})
=
(g_1\circ\beta_1\circ g_2^{-1},\,\dots,\, g_{k-1}\circ\beta_{k-1}).
\end{equation*}

Define the moment map
\[
m:T^*V\rightarrow\mathfrak{sl}_{\mathbf{v}}^*=\bigoplus_{i=1}^{k-1}\mathfrak{sl}_{d_i}^*.
\]

\begin{equation}\label{sl quiver moment map}
(\alpha_i,\beta_i)\mapsto \left((X_i)\mapsto \sum_{i=1}^{k-1}\Tr\big((\alpha_{i-1}\beta_{i-1}-\beta_i\alpha_i)X_i\big)\right),
\end{equation}
where by convention $\alpha_0=\beta_0=0$.

We identify $\mathfrak{sl}_{\mathbf{v}}^*$ with $\mathfrak{sl}_{\mathbf{v}}$ using the trace pairing. The $0$-fiber $m^{-1}(0)$ may be identified with the set of $(\alpha_i,\beta_i)\in V\times V^*$ such that, for all $i$, there exists $\lambda_i\in\C$ with
\begin{equation}\label{moment map DKS}
\alpha_{i-1}\beta_{i-1}-\beta_i\alpha_i=\lambda_i\Id_{d_i}.
\end{equation}

\begin{defn}\label{oriented fiber}
Let $m:T^*V\rightarrow\mathfrak{sl}_{\mathbf{v}}$ be the moment map as in Equation~\ref{sl quiver moment map}, after identifying $\mathfrak{sl}_{\mathbf{v}}^*$ with $\mathfrak{sl}_{\mathbf{v}}$ using the trace pairing. Define the following open subset of the $0$-fiber:
\begin{equation}
m_{\mathrm{inj}}^{-1}(0):=\{(\alpha_i,\beta_i)\in m^{-1}(0): \text{each $\alpha_i$ is injective}\}.
\end{equation}
\end{defn}
 
Observe that $m_{\mathrm{inj}}^{-1}(0)$ is a smooth open subset of $m^{-1}(0)$. Indeed, the differential of $m$ is surjective along the locus where all $\alpha_i$ are injective: if $(X_i)\in\bigoplus_i\mathfrak{sl}_{d_i}$ annihilates the image of the differential, then varying $\beta_i$ gives $X_{i+1}\alpha_i-\alpha_iX_i=0$ for $1\leq i\leq k-2$, while varying $\beta_{k-1}$ gives $\alpha_{k-1}X_{k-1}=0$. Since all $\alpha_i$ are injective, descending induction gives $X_{k-1}=\cdots=X_1=0$. Thus the differential $d m$ has full rank on the injective locus, and $m_{\mathrm{inj}}^{-1}(0)$ is smooth. 

Moreover, by \cite[Lemma~5.29]{DancerKirwanSwann2013}, each point of $m_{\text{inj}}^{-1}(0)$ has trivial $SL_{\mathbf{v}}$-stabilizer.  Taking the corresponding geometric quotient, we obtain a smooth, not necessarily affine, variety
\begin{equation}\label{DKS Quiver Formula}
m_{\text{inj}}^{-1}(0)/SL_{\mathbf{v}}.
\end{equation}

\begin{defn}\label{DKS quiver variety definition}
We call the affinization of $m_{\text{inj}}^{-1}(0)/SL_{\mathbf{v}}$ \eqref{DKS Quiver Formula} a \textit{DKS Quiver variety}. It will be denoted by 
\begin{equation}
    m^{-1}_{\text{inj}}(0)//\SL_{\mathbf{v}}:=\Spec(\C[m^{-1}_{\text{inj}(0)}]^{\SL_{\mathbf{v}}}).
\end{equation}
\end{defn}
\noindent In general, we also denote the affine closure of a variety $X$ by $\overline{X}^{\text{aff}}=\Spec\, (\Gamma(X,\mathcal{O}_X)).$ Thus, by definition, 
$$\overline{m^{-1}_{\text{inj}}(0)/\SL_{\mathbf{v}}}^{\text{aff}} =m^{-1}_{\text{inj}}(0)//\SL_{\mathbf{v}}.$$

We caution that we do \textit{not} work with the full GIT quotient
\[
m^{-1}(0)//\SL_{\mathbf{v}}:=\Spec(\C[m^{-1}(0)]^{\SL_{\mathbf{v}}}).
\]
Indeed, these full affine quotients are not even isomorphic as abstract varieties in general when the dimension vector is transformed by a Weyl-group element $\sigma\in W$; see Appendix~\ref{1323ex} for an example. Moreover, quiver varieties with an $\SL$-gauge group can be quite pathological: in particular, they are not necessarily normal; see Section~\ref{NonnormalQuiverSection}. Thus, the conjecture following \cite[Definition~4.3]{Wang2021GelfandGraev} is unfortunately not true.

\subsection{The DKS Isomorphism}

Let us now recall the concrete description, from \cite{DancerKirwanSwann2013}, of the quotient $m_{\mathrm{inj}}^{-1}(0)/\SL_{\mathbf{v}}$.

Let $V^\circ:=\{(\alpha_i)\in V:\text{each $\alpha_i$ is injective}\}$. By a standard parabolic subgroup, we mean a parabolic subgroup of $\SL_n$ containing the Borel subgroup of upper triangular matrices.

\begin{prop}\label{oriented basic affine}
For a quiver $Q=(d_1)-\dots-[d_k]$ with $0=d_0<d_1<\dots<d_k=n$, there is an isomorphism
\begin{equation}
    \Psi:V^\circ/\SL_{\mathbf{v}}\rightarrow \SL_n/[P,P].
\end{equation}
Here $P$ is the standard parabolic subgroup whose Levi subgroup has block sizes, in order, $s_1,s_2,\dots,s_k$, where $s_i:=d_i-d_{i-1}$.
\end{prop}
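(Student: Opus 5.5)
The plan is to realize both sides as quotients of a single homogeneous space by exhibiting $V^\circ$ as a homogeneous $\SL_n\times\SL_{\mathbf v}$-space. Here $\SL_n$ acts on the framing node by postcomposition with $\alpha_{k-1}$, and this commutes with the gauge action.

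To a point $(\alpha_i)\in V^\circ$ attach the flag
\[
F_1\subset F_2\subset\cdots\subset F_{k-1}\subset\C^n, \qquad F_i:=\operatorname{im}(\alpha_{k-1}\circ\cdots\circ\alpha_i)\cong\C^{d_i}.
\]
Each $F_i$ comes with the volume form transported from the standard one on $\C^{d_i}$. Modding out by $\SL_{d_i}$ kills exactly the choice of basis of $\C^{d_i}$ up to determinant one. So $V^\circ/\SL_{\mathbf v}$ should be identified with the space of flags of type $(d_1,\dots,d_{k-1})$ equipped with a nonzero volume form on each $F_i$, equivalently a nonzero decomposable vector in $\wedge^{d_i}F_i\subset\wedge^{d_i}\C^n$.

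The concrete steps are as follows.

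First, show that $\SL_{\mathbf v}$ acts freely on $V^\circ$, either directly by descending induction using injectivity, or via \cite[Lemma~5.29]{DancerKirwanSwann2013}. Show also that $\SL_n$ acts transitively on $V^\circ/\SL_{\mathbf v}$: given two tuples, choose bases compatibly along the chain and extend to bases of $\C^n$, then adjust determinants using the gauge freedom at each node and the free last block of size $s_k>0$, so that the final element lies in $\SL_n$.

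Second, compute the stabilizer of the base point $\alpha_i^0$ given by the standard inclusions $\C^{d_i}\hookrightarrow\C^{d_{i+1}}$. An element $g\in\SL_n$ fixes the class of $(\alpha_i^0)$ exactly when there is $(h_i)\in\SL_{\mathbf v}$ with $g\,\alpha^0_{k-1}=\alpha^0_{k-1}h_{k-1}$ and $h_{i+1}\alpha^0_i=\alpha^0_ih_i$. These conditions say that $g$ preserves the standard partial flag, so $g\in P$, and that the induced map on each $F_i=\C^{d_i}$, namely $h_i$, has determinant $1$. In terms of the Levi blocks $g_1,\dots,g_k$, this means $\det g_1\cdots\det g_i=1$ for all $i<k$, hence $\det g_j=1$ for all $j$. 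The group $\{p\in P:\det g_j(p)=1\ \forall j\}$ is $[P,P]=U_P\rtimes[L,L]$, since $[L,L]=\prod\SL_{s_j}$ intersected appropriately and $U_P\subset[P,P]$. This yields a bijective $\SL_n$-equivariant morphism
\[
\Psi:V^\circ/\SL_{\mathbf v}\to\SL_n/[P,P].
\]

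Third, upgrade bijectivity to an isomorphism. The cleanest route is to build the inverse directly: the map $\SL_n\to V^\circ$, $g\mapsto(\alpha_1^0,\dots,\alpha_{k-2}^0,\,g\alpha_{k-1}^0)$ composed with the quotient is $[P,P]$-invariant and so descends to $\SL_n/[P,P]\to V^\circ/\SL_{\mathbf v}$. Checking that this is inverse to $\Psi$ uses the transitivity argument of the first step. Alternatively, since both spaces are smooth (the quotient by a free action with a geometric quotient) and we are in characteristic $0$, a bijective equivariant morphism between homogeneous spaces is an isomorphism, because orbit maps are separable.

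I expect the main obstacle to be making the quotient $V^\circ/\SL_{\mathbf v}$ rigorous as a variety, i.e. knowing that the geometric quotient exists and that maps out of it descend. I would handle this by exhibiting a local section or slice, normalizing each $\alpha_i$ to the standard inclusion up to a determinant, or simply appeal to the existence of geometric quotients for free actions with closed orbits, as in the paragraph preceding Definition~\ref{DKS quiver variety definition}.
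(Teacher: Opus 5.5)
Your proposal is correct and takes the same approach as the paper. You show transitivity of $\SL_{\mathbf v}\times\SL_n$ on $V^\circ$ by normalizing to the standard inclusions $\alpha_i^\circ$, identify the stabilizer of that base point with $[P,P]$ via determinant-one conditions on the nested diagonal blocks, and write down the inverse $g[P,P]\mapsto[(\alpha_1^\circ,\dots,\alpha_{k-2}^\circ,g\alpha_{k-1}^\circ)]$. The only difference is cosmetic (you compute the stabilizer of the gauge class inside $\SL_n$ rather than in the product followed by projection), and your descent of the inverse from $\SL_n$ treats the morphism issue more carefully than the paper's sketch.
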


\begin{proof}
This appears as \cite[Proposition~4.10]{DancerKirwanSwann2013}. We recall the proof for the reader's convenience.

Let
\[
\alpha_i^\circ:=
\left(\begin{array}{c}
\Id_{d_i\times d_i} \\
\hline
0_{s_{i+1}\times d_i}
\end{array}\right),
\]
where $s_{i+1}=d_{i+1}-d_i$. Thus $\alpha_i^\circ:\C^{d_i}\rightarrow\C^{d_{i+1}}$ is the standard inclusion.

Suppose $(\alpha_1,\dots,\alpha_{k-1})\in V^\circ$. Since each $\alpha_i$ has maximal rank and $d_i<d_{i+1}$, there exists $(g_1,\dots,g_k)\in \prod_{i=1}^k \SL_{d_i}=\SL_{\mathbf{v}}\times\SL_n$ such that
\[
(g_i)_{i=1}^k\cdot(\alpha_1,\dots,\alpha_{k-1})=(\alpha_1^\circ,\dots,\alpha_{k-1}^\circ).
\]
Indeed, starting with any $g_1\in\SL_{d_1}$, the map $\alpha_i(v)\mapsto\alpha_i^\circ(g_i v)$ from $\alpha_i(\C^{d_i})$ to $\alpha_i^\circ(\C^{d_i})$ can be extended to an element $g_{i+1}\in\SL_{d_{i+1}}$, since the complementary block can be chosen with arbitrary determinant.

Next, consider the stabilizer
\[
H:=\Stab_{\SL_{\mathbf{v}}\times\SL_n}(\alpha_1^\circ,\dots,\alpha_{k-1}^\circ).
\]
We claim that projection to the final factor identifies $H$ with $[P,P]$. Indeed, the stabilizer condition at the $i$th arrow says that $g_{i+1}$ has the following block form:
\[
g_{i+1}=
\left(\begin{array}{c|c}
g_i & * \\
\hline
0 & h_i
\end{array}\right),
\]
where $h_i\in\SL_{s_{i+1}}$. Iterating this description, the final component $g_k$ is block upper triangular with diagonal blocks of determinant $1$. Hence $g_k\in[P,P]$. Conversely, every element of $[P,P]$ arises uniquely in this way by reading off the nested diagonal subblocks. Thus projection to the final factor gives an identification $H\simeq[P,P]$.

Now choose $(g_i)_{i=1}^k$ such that $(g_i)_{i=1}^k\cdot(\alpha_i)_{i=1}^{k-1}=(\alpha_i^\circ)_{i=1}^{k-1}$. If $(g_i')_{i=1}^k$ is another such choice, then $(g_i'g_i^{-1})_{i=1}^k\in H$, so $g_k'^{-1}$ and $g_k^{-1}$ differ by right multiplication by an element of $[P,P]$. Therefore the formula
\[
(\alpha_i)_{i=1}^{k-1}\mapsto g_k^{-1}[P,P]
\]
descends to a well-defined map $\Psi:V^\circ/\SL_{\mathbf{v}}\rightarrow \SL_n/[P,P]$.

The inverse sends $g[P,P]$ to the $\SL_{\mathbf{v}}$-orbit of $(1,\dots,1,g)\cdot(\alpha_1^\circ,\dots,\alpha_{k-1}^\circ)$. The stabilizer computation above shows that this is well-defined and inverse to $\Psi$.
\end{proof}

\begin{cor}\label{generalized base affine quiver}
Assume $d_i<d_{i+1}$ for all $i$. Then
\begin{equation}
V//\SL_{\mathbf{v}}:=\Spec(\C[V]^{\SL_{\mathbf{v}}})\simeq \overline{\SL_n/[P,P]}^{\mathrm{aff}}.
\end{equation}
\end{cor}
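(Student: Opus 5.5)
The plan is to combine Proposition~\ref{oriented basic affine} with a codimension estimate and Hartogs' lemma for the normal variety $V$. Since $\SL_{\mathbf{v}}$ is reductive, $\C[V]^{\SL_{\mathbf{v}}}$ is finitely generated and $V//\SL_{\mathbf{v}}$ is an affine variety. By definition,
\[
\overline{\SL_n/[P,P]}^{\mathrm{aff}}=\Spec\,\Gamma(\SL_n/[P,P],\mathcal{O}),
\]
and Proposition~\ref{oriented basic affine} gives an isomorphism $V^\circ/\SL_{\mathbf{v}}\simeq\SL_n/[P,P]$. It therefore suffices to produce a natural isomorphism
\[
\C[V]^{\SL_{\mathbf{v}}}\xrightarrow{\sim}\Gamma(V^\circ/\SL_{\mathbf{v}},\mathcal{O}).
\]

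First, the regular functions on the quotient are the invariants upstairs. Each point of $V^\circ$ has trivial stabilizer, by the stabilizer computation in the proof of Proposition~\ref{oriented basic affine}, and $V^\circ\to V^\circ/\SL_{\mathbf{v}}\simeq \SL_n/[P,P]$ is a principal $\SL_{\mathbf{v}}$-bundle. Indeed, it is identified with $(\SL_{\mathbf{v}}\times\SL_n)/H\to \SL_n/[P,P]$, using the transitivity statement from that proof. Hence $\Gamma(V^\circ/\SL_{\mathbf{v}},\mathcal{O})=\C[V^\circ]^{\SL_{\mathbf{v}}}$.

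Second, restriction identifies the invariants on $V^\circ$ with those on $V$. The key claim is $\C[V^\circ]=\C[V]$. Since $V$ is an affine space, hence normal, Hartogs' lemma reduces this to showing that the complement $V\setminus V^\circ$ has codimension $\geq 2$. This complement is the union over $i$ of the loci where $\alpha_i\in\Hom(\C^{d_i},\C^{d_{i+1}})$ fails to have full rank $d_i$. By the standard determinantal formula, the non-maximal-rank locus in $\Hom(\C^{a},\C^{b})$ with $a\le b$ has codimension $b-a+1$. Since $d_i<d_{i+1}$, this codimension is at least $2$. So regular functions on $V^\circ$ extend to $V$, and taking $\SL_{\mathbf{v}}$-invariants of $\C[V]=\C[V^\circ]$ yields the claimed isomorphism. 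Taking $\Spec$ and composing with $\Psi$ then gives
\[
V//\SL_{\mathbf{v}}\simeq\overline{\SL_n/[P,P]}^{\mathrm{aff}}.
\]

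The main obstacle is the first step: one must justify that the quotient $V^\circ/\SL_{\mathbf{v}}$ really is a geometric quotient whose functions are exactly the invariants. I would handle this by exhibiting $V^\circ$ explicitly as $(\SL_{\mathbf{v}}\times\SL_n)/H$ with $H\simeq[P,P]$. The quotient by the free $\SL_{\mathbf{v}}$-factor then becomes $\SL_n/[P,P]$, and the identification of functions follows from $\C[G/H]=\C[G]^H$ applied in stages. The codimension count in the second step is routine, and it is exactly where the strict inequality $d_i<d_{i+1}$ is used.
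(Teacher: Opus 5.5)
Your proposal is correct and is essentially the paper's proof. You identify $\SL_n/[P,P]$ with $V^\circ/\SL_{\mathbf{v}}$ via Proposition~\ref{oriented basic affine}, use normality of $V$ and the codimension $d_{i+1}-d_i+1\geq 2$ of the rank-deficient loci to get $\C[V]=\Gamma(V^\circ,\mathcal{O})$ by Hartogs, and take $\SL_{\mathbf{v}}$-invariants. Your principal-bundle description $V^\circ\simeq(\SL_{\mathbf{v}}\times\SL_n)/H$, which gives $\Gamma(V^\circ/\SL_{\mathbf{v}},\mathcal{O})=\Gamma(V^\circ,\mathcal{O})^{\SL_{\mathbf{v}}}$, usefully fills in a step the paper only asserts.
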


\begin{proof}
By Proposition~\ref{oriented basic affine}, $V^\circ/\SL_{\mathbf{v}}\simeq \SL_n/[P,P]$. Therefore the affinization
\[
V^\circ//\SL_{\mathbf{v}}:=\Spec\big(\Gamma(V^\circ,\mathcal{O}_{V^\circ})^{\SL_{\mathbf{v}}}\big)
\]
is isomorphic to $\overline{\SL_n/[P,P]}^{\mathrm{aff}}$. It remains to show that $V//\SL_{\mathbf{v}}\simeq V^\circ//\SL_{\mathbf{v}}$.

The affine space $V$ is smooth, hence normal. Moreover, $V\setminus V^\circ$ is the union of the rank-deficient loci
\[
Z_i:=\{(\alpha_j)\in V:\operatorname{rank}(\alpha_i)\leq d_i-1\}.
\]
The locus where $\alpha_i$ fails to have maximal rank has codimension $d_{i+1}-d_i+1$. Since $d_i<d_{i+1}$, this codimension is at least $2$. Hence $\operatorname{codim}(V\setminus V^\circ)\geq 2$. By the algebraic Hartogs theorem, restriction induces an isomorphism $\C[V]\simeq \Gamma(V^\circ,\mathcal{O}_{V^\circ})$. The claim now follows by taking $\SL_{\mathbf{v}}$-invariants.
\end{proof}

The following result was also proven in \cite{DancerKirwanSwann2013}. For the reader's convenience, we sketch the proof, since we will use the construction of the map in establishing Proposition~\ref{DKS equivariance}.

\begin{thm}[\cite{DancerKirwanSwann2013}]\label{DKS quiver variety}
For $Q$ as in Equation~\ref{DKS quiver}, there is an isomorphism
\begin{equation}
    \Xi:m_{\mathrm{inj}}^{-1}(0)/\SL_{\mathbf{v}}\rightarrow T^*(\SL_n/[P,P]),
\end{equation}
where $P$ is the standard parabolic subgroup as in Proposition~\ref{oriented basic affine}.
\end{thm}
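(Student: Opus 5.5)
We prove this by reducing to the standard description of the cotangent bundle of a homogeneous space as an associated bundle:
\[
T^*(\SL_n/[P,P])\simeq \SL_n\times^{[P,P]}\big(\mathfrak{sl}_n/\mathfrak{p}'\big)^*\simeq \SL_n\times^{[P,P]}(\mathfrak{p}')^{\perp},
\]
where $\mathfrak{p}'=\Lie[P,P]$ and $(\mathfrak{p}')^\perp\subset\mathfrak{sl}_n$ is its annihilator under the trace form. Thus $(\mathfrak{p}')^\perp$ consists of the trace-zero matrices that are strictly block lower triangular plus block-diagonal scalar matrices (one scalar per Levi block).

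\textbf{Step 1: the map.} Given $(\alpha_i,\beta_i)\in m^{-1}_{\mathrm{inj}}(0)$, set
\[
\xi:=\alpha_{k-1}\beta_{k-1}\in\mathfrak{gl}_n.
\]
Since $\Tr(\alpha_{k-1}\beta_{k-1})=\Tr(\beta_{k-1}\alpha_{k-1})=-\lambda_{k-1}d_{k-1}+\Tr(\alpha_{k-2}\beta_{k-2})$, telescoping with the moment map relations \eqref{moment map DKS} shows that $\Tr\xi$ is a combination of the $\lambda_i$. We will need to correct $\xi$ by its trace part, or verify it lands in $\mathfrak{sl}_n$ after normalization; this is a bookkeeping point.

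As in Proposition~\ref{oriented basic affine}, use $\SL_{\mathbf v}\times\SL_n$ to move $(\alpha_i)$ to the standard inclusions $(\alpha_i^\circ)$ via some $(g_i)$, and let $(\beta_i')$ be the transformed covectors. Define
\[
\Xi(\alpha,\beta)=[g_k^{-1},\,\alpha^\circ_{k-1}\beta'_{k-1}].
\]
Well-definedness follows from the stabilizer computation $H\simeq[P,P]$ in Proposition~\ref{oriented basic affine}, together with equivariance of $\xi$: $\xi$ transforms by conjugation under $g_k$ and is invariant under $\SL_{\mathbf v}$.

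\textbf{Step 2: the image lies in $(\mathfrak p')^\perp$.} With standard inclusions, the relations $\alpha^\circ_{i-1}\beta'_{i-1}-\beta'_i\alpha^\circ_i=\lambda_i\Id$ say inductively that $\alpha^\circ_i\beta'_i$ restricted to $\C^{d_i}$ has upper-left $d_i\times d_i$ corner equal to $\alpha^\circ_{i-1}\beta'_{i-1}-\lambda_i\Id$. By induction on $i$, the matrix $\alpha^\circ_{k-1}\beta'_{k-1}$ has the following shape:
\begin{itemize}
\item its columns beyond $d_{k-1}$ vanish;
\item its diagonal blocks are scalars $-\sum_{j\ge i}\lambda_j$ on the $i$th block;
\item its entries above the block diagonal vanish.
\end{itemize}
This is exactly membership in $(\mathfrak p')^\perp$ after the trace correction. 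Conversely, any element of $(\mathfrak p')^\perp$ arises: reading off the nested blocks recovers $\beta'_{k-1}$ and the $\lambda_i$, and then $\beta'_{k-2},\dots$ successively, uniquely. So fiberwise we get a bijection between $\{\beta':(\alpha^\circ,\beta')\in m^{-1}(0)\}$ and $(\mathfrak p')^\perp$, linear and $H$-equivariant.

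\textbf{Step 3: isomorphism of varieties.} The slice $\{(\alpha^\circ,\beta')\}$ together with the group action exhibits $m^{-1}_{\mathrm{inj}}(0)$ as $(\SL_{\mathbf v}\times\SL_n)\times^{H}\mathcal S$, where $\mathcal S$ is the linear space from Step 2. Quotienting by $\SL_{\mathbf v}$ gives $\SL_n\times^{[P,P]}\mathcal S\simeq\SL_n\times^{[P,P]}(\mathfrak p')^\perp$. Since stabilizers are trivial (\cite[Lemma~5.29]{DancerKirwanSwann2013}) and the quotients are geometric, this bijective equivariant morphism between smooth varieties is an isomorphism. One can also check this directly via the explicit inverse $[g,\xi]\mapsto$ the orbit of $(1,\dots,1,g)\cdot(\alpha^\circ,\beta'(\xi))$.

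The main obstacle is the inductive block-shape analysis in Step 2: we must show the map $\beta'\mapsto\alpha^\circ_{k-1}\beta'_{k-1}$ is a linear isomorphism onto $(\mathfrak p')^\perp$, with the scalar parameters $\lambda_i$ matching the block-scalar (abelian Levi) directions, and we must get the trace normalization right.
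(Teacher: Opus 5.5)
Your argument is the paper's: move $(\alpha_i)$ to the standard inclusions $(\alpha_i^\circ)$, use the moment-map relations inductively to get the block shape of $g_k\alpha_{k-1}\beta_{k-1}g_k^{-1}$, and send the orbit to $[g_k^{-1},\Gamma(g_k\alpha_{k-1}\beta_{k-1}g_k^{-1})]$. Your Step~3, the associated-bundle description $m^{-1}_{\mathrm{inj}}(0)\simeq(\SL_{\mathbf v}\times\SL_n)\times^H\mathcal S$, fills in the justification that the paper's sketch leaves to \cite{DancerKirwanSwann2013}. Two details in Step~2 need fixing.

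\textbf{Transposed shapes.} Your explicit shapes are transposed. With $\alpha^\circ_{k-1}=\begin{psmatrix}\Id\\0\end{psmatrix}$ one gets $\alpha^\circ_{k-1}\beta'_{k-1}=\begin{psmatrix}\beta'_{k-1}\\0\end{psmatrix}$, so the last $s_k$ \emph{rows} vanish, not the columns. The recursion $\alpha_i\beta_i=\begin{psmatrix}\alpha_{i-1}\beta_{i-1}-\lambda_i\Id & D_i\\0&0\end{psmatrix}$ then shows that the entries \emph{below} the block diagonal vanish, while the blocks $D_i$ above it are free.

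Correspondingly, $[P,P]$ is block upper triangular, so its trace-form annihilator is $(\mathfrak p')^\perp=\mathfrak u_P\oplus\mathfrak z(\mathfrak l)$. That is strictly block upper triangular plus block scalars, not block lower triangular. Your two errors cancel, so the conclusion is right, but as written Step~2 compares two incorrect descriptions.

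\textbf{Trace correction.} This is not optional bookkeeping. The raw image of $\beta'\mapsto\alpha^\circ_{k-1}\beta'_{k-1}$ is the space $\mathcal T$ of block upper triangular matrices with block-scalar diagonal and last diagonal block $0$. This space is not contained in $\mathfrak{sl}_n$, so your Step~1 formula does not land in $(\mathfrak p')^\perp$. You need $\Gamma(X)=X-\frac{\Tr X}{n}\Id_n$. The bijectivity of $\Gamma$ from $\mathcal T$ onto $(\mathfrak p')^\perp$ follows from three facts:
\begin{enumerate}
\item $\Id_n\notin\mathcal T$, because its last block is nonzero, so $\Gamma$ is injective on $\mathcal T$.
\item $\dim\mathcal T=(k-1)+\dim\mathfrak u_P=\dim(\mathfrak p')^\perp$.
\item $\Gamma$ commutes with $\Ad$, so $H$-equivariance is preserved.
\end{enumerate}
With these corrections the argument is complete.
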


\begin{proof}[\textit{Proof sketch}]
We canonically identify
\begin{equation*}
T^*(\SL_n/[P,P])=\SL_n\times_{[P,P]}\Lie([P,P])^\perp
\end{equation*}
and $\Lie([P,P])^\perp=\mathfrak{u}_{P}\oplus\mathfrak{z}(\mathfrak{l})$. Here, $\mathfrak{u}_P$ is the nilradical of $\mathfrak{p}=\Lie(P)$ and $\mathfrak{z}(\mathfrak{l})$ is the center of the Levi subalgebra $\mathfrak{l}=\Lie(L)$, which may be identified with $\C^{k-1}$, where $k$ is the number of blocks of $L\subset \SL_n$.

Next, because $(\alpha_i,\beta_i)\in m_{\mathrm{inj}}^{-1}(0)$, we may find $(g_i)_{i=1}^{k}\in\prod_{i=1}^k \SL_{d_i}$ such that $g_{i+1}\alpha_i g_i^{-1}=\alpha_i^\circ$. Replacing $(\alpha_i,\beta_i)$ by this representative and suppressing tildes, we may assume $\alpha_i=\alpha_i^\circ$ for all $i$. By the moment map equations \eqref{moment map DKS}, we then have
\begin{equation*}
\alpha_i\beta_i=
\left(
\begin{array}{c|c}
\alpha_{i-1}\beta_{i-1}-\lambda_i\Id_{d_i} & D_{d_i\times s_{i+1}} \\
\hline
0_{s_{i+1}\times d_i} & 0_{s_{i+1}\times s_{i+1}}
\end{array}
\right),
\end{equation*}
where the block decomposition is $\C^{d_{i+1}}=\C^{d_i}\oplus\C^{s_{i+1}}$ and $D_{d_i\times s_{i+1}}$ is an arbitrary matrix. It follows by induction that $X:=g_k\alpha_{k-1}\beta_{k-1}g_k^{-1}$ lies in $\mathfrak{z}(\mathfrak{l})\oplus\mathfrak{u}_P$, up to subtracting its scalar trace part. Finally, given $X\in\mathfrak{gl}_n$, define its trace-free part 

\begin{equation}\label{TraceFreeGamma}
    \Gamma(X):=X-\frac{\Tr(X)}{n}\Id_n\in\mathfrak{sl}_n.
\end{equation}
The DKS map is then given by
\begin{align}\label{explicit DKS}
    \Xi:m_{\mathrm{inj}}^{-1}(0)/\SL_{\mathbf{v}}&\rightarrow \SL_n\times_{[P,P]}\Lie([P,P])^\perp\\
    [(\alpha_1,\beta_1,\dots,\alpha_{k-1},\beta_{k-1})] &\mapsto [g_k^{-1}, \Gamma(g_k\alpha_{k-1}\beta_{k-1}g_k^{-1})]. \qedhere
\end{align}
\end{proof}

As a result, we obtain the following corollary.

\begin{cor}\label{DKSAffinizedIso}
For any quiver $Q=(d_1)-\dots-[d_k]$ with $d_1<\dots<d_k=n$, we have an isomorphism
\[
m_{\mathrm{inj}}^{-1}(0)//\SL_{\mathbf{v}}:=\overline{m_{\mathrm{inj}}^{-1}(0)/\SL_{\mathbf{v}}}^{\mathrm{aff}}\simeq\overline{T^*(\SL_n/[P,P])}^{\mathrm{aff}}.
\]
\end{cor}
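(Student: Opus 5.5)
The plan is to deduce the corollary formally from Theorem~\ref{DKS quiver variety}, using that affinization $X\mapsto\overline{X}^{\mathrm{aff}}=\Spec\,\Gamma(X,\mathcal{O}_X)$ is functorial and sends isomorphisms of varieties to isomorphisms of affine schemes.

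First, the isomorphism $\Xi:m_{\mathrm{inj}}^{-1}(0)/\SL_{\mathbf{v}}\to T^*(\SL_n/[P,P])$ induces, by pullback of global functions, an isomorphism of $\C$-algebras
\[
\Xi^*:\Gamma\big(T^*(\SL_n/[P,P]),\mathcal{O}\big)\xrightarrow{\sim}\Gamma\big(m_{\mathrm{inj}}^{-1}(0)/\SL_{\mathbf{v}},\mathcal{O}\big),
\]
with inverse $(\Xi^{-1})^*$. Applying $\Spec$ then gives the isomorphism $\overline{m_{\mathrm{inj}}^{-1}(0)/\SL_{\mathbf{v}}}^{\mathrm{aff}}\simeq\overline{T^*(\SL_n/[P,P])}^{\mathrm{aff}}$.

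Second, I would match this with the definition $m^{-1}_{\mathrm{inj}}(0)//\SL_{\mathbf{v}}=\Spec(\C[m^{-1}_{\mathrm{inj}}(0)]^{\SL_{\mathbf{v}}})$ in Definition~\ref{DKS quiver variety definition}. The needed identification is
\[
\Gamma\big(m_{\mathrm{inj}}^{-1}(0)/\SL_{\mathbf{v}},\mathcal{O}\big)=\Gamma\big(m_{\mathrm{inj}}^{-1}(0),\mathcal{O}\big)^{\SL_{\mathbf{v}}}.
\]
This holds because the quotient is geometric with free action: by \cite[Lemma~5.29]{DancerKirwanSwann2013} stabilizers are trivial, and the DKS description realizes it as a principal $\SL_{\mathbf{v}}$-bundle. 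Concretely, $m^{-1}_{\mathrm{inj}}(0)\to T^*(\SL_n/[P,P])$ is the restriction of $V^\circ\to V^\circ/\SL_{\mathbf{v}}$ from Proposition~\ref{oriented basic affine}, so it is locally trivial. Functions on the base are therefore exactly the invariant functions upstairs.

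The only step with any content is this last identification, namely that $m^{-1}_{\mathrm{inj}}(0)\to m^{-1}_{\mathrm{inj}}(0)/\SL_{\mathbf{v}}$ is a good (indeed principal) quotient. I expect it to be routine given the free action and the explicit slice $\alpha_i=\alpha_i^\circ$ used in the proof of Theorem~\ref{DKS quiver variety}. The paper in fact records this equality as holding by definition.
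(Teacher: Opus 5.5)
Your proposal is correct and matches the paper. The paper states the corollary without further argument as an immediate consequence of Theorem~\ref{DKS quiver variety}, i.e.\ by applying $\Spec\,\Gamma(-,\mathcal{O})$ to $\Xi$, and it takes the identification $\Gamma(m_{\mathrm{inj}}^{-1}(0)/\SL_{\mathbf{v}},\mathcal{O})=\C[m_{\mathrm{inj}}^{-1}(0)]^{\SL_{\mathbf{v}}}$ as part of Definition~\ref{DKS quiver variety definition}. Your extra justification of that identification is fine, with one correction of wording: the quotient map is not literally a restriction of $V^\circ\to V^\circ/\SL_{\mathbf{v}}$; rather, it is pulled back along the equivariant projection $m_{\mathrm{inj}}^{-1}(0)\to V^\circ$, equivalently trivialized by the slice $\alpha_i=\alpha_i^\circ$.
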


We emphasize that, in general, we do not have $m^{-1}(0)//\SL_{\mathbf{v}}=m_{\mathrm{inj}}^{-1}(0)//\SL_{\mathbf{v}}$, even though this equality holds in the full-rank case $\mathbf{v}=(1,2,\dots,n-1)$ \cite[Theorem~7.18]{DancerKirwanSwann2013}. See Appendix~\ref{1323ex} for a discussion of the counterexample when $Q=(2)-[3]$.

Let us now establish various equivariance properties of the DKS map $\Xi$. There is a natural $\SL_n\times L/[L,L]$-action on $\SL_n\times_{[P,P]}\Lie([P,P])^\perp$ given by the formula
\begin{equation}\label{SL_n times L^ab action}
(g,t)\cdot[g',x]:=[gg't,\Ad_{t^{-1}}(x)].
\end{equation}
Let us now transport this action to the DKS quiver varieties.

\begin{defn}\label{SLn action quiver variety}
Suppose $g\in\SL_n$ and $[\alpha_1,\beta_1,\dots,\alpha_{k-1},\beta_{k-1}]\in m_{\mathrm{inj}}^{-1}(0)/\SL_{\mathbf{v}}$. Then the $\SL_n$-action on $m_{\mathrm{inj}}^{-1}(0)/\SL_{\mathbf{v}}$ is given by
\begin{equation}\label{SL_n action on quiver}
g\cdot[\alpha_1,\beta_1,\dots,\alpha_{k-1},\beta_{k-1}]:=[\alpha_1,\beta_1,\dots,g\alpha_{k-1},\beta_{k-1}g^{-1}].
\end{equation}
\end{defn}

\noindent The $\SL_n$-action on $m_{\mathrm{inj}}^{-1}(0)/\SL_{\mathbf{v}}$ is induced by the standard action on the framed vertex $\C^n$. Thus the DKS map $\Xi$ is $\SL_n$-equivariant:
\begin{align*}
    \Xi(g\cdot[\alpha_1,\beta_1,\dots,\alpha_{k-1},\beta_{k-1}])
    &=\Xi([\alpha_1,\beta_1,\dots,g\alpha_{k-1},\beta_{k-1}g^{-1}])\\
    &=[gg_k^{-1},\Gamma((g_k g^{-1})(g\alpha_{k-1}\beta_{k-1}g^{-1})(gg_k^{-1}))]\\
    &=[gg_k^{-1},\Gamma(g_k\alpha_{k-1}\beta_{k-1}g_k^{-1})]\\
    &=g\cdot\Xi([\alpha_1,\beta_1,\dots,\alpha_{k-1},\beta_{k-1}]).
\end{align*}

Next, we address the $L^{\mathrm{ab}}:=L/[L,L]$ compatibility. We represent this torus by the block-scalar subgroup $Z(L)^\circ\subset L$.

\begin{defn}\label{L^ab action quiver variety}
Define the $L^{\mathrm{ab}}\simeq Z(L)^\circ$-action on $m^{-1}_{\mathrm{inj}}(0)/\SL_{\mathbf{v}}$ as follows: Let
\[
h=\mathrm{diag}(z_1,\dots,z_1,z_2,\dots,z_2,\dots,z_k,\dots,z_k)\in Z(L)^\circ,
\]
where $z_i$ appears $d_i-d_{i-1}$ times, $d_0:=0$, and $\prod_i z_i^{d_i-d_{i-1}}=1$. Define
\[
h_i:=\mathrm{diag}(z_1,\dots,z_1,\dots,z_i,\dots,z_i)\in\GL_{d_i},
\]
where $z_a$ appears $d_a-d_{a-1}$ times. Then the action of $h$ on $[\alpha_1,\beta_1,\dots,\alpha_{k-1},\beta_{k-1}]\in m_{\mathrm{inj}}^{-1}(0)/\SL_{\mathbf{v}}$ is given by
\begin{equation}\label{L/[L,L] action on quiver}
\begin{aligned}
\alpha_i &\mapsto h_{i+1}^{-1}\alpha_i h_i,
&
\beta_i &\mapsto h_i^{-1}\beta_i h_{i+1},
\qquad 1\leq i\leq k-2,\\
\alpha_{k-1} &\mapsto \alpha_{k-1}h_{k-1},
&
\beta_{k-1} &\mapsto h_{k-1}^{-1}\beta_{k-1}.
\end{aligned}
\end{equation}
\end{defn}

Now we check $L/[L,L]$-equivariance of $\Xi$. Observe first that if $g_k\alpha_{k-1}=\alpha_{k-1}^\circ$, then the identity $h\alpha_{k-1}^\circ=\alpha_{k-1}^\circ h_{k-1}$ implies $(h^{-1}g_k)(\alpha_{k-1}h_{k-1})=\alpha_{k-1}^\circ$. Thus, writing $\Gamma(X)=X-\frac{\Tr(X)}{n}\Id_n$ \eqref{TraceFreeGamma} for the trace-free projection, we have
\[
\begin{aligned}
\Xi(B\cdot h)
&=\left[g_k^{-1}h,
\Gamma\!\left((h^{-1}g_k)(\alpha_{k-1}h_{k-1})(h_{k-1}^{-1}\beta_{k-1})(g_k^{-1}h)\right)\right]\\
&=\left[g_k^{-1}h,
\Gamma(h^{-1}g_k\alpha_{k-1}\beta_{k-1}g_k^{-1}h)\right] \\
&=
\left[g_k^{-1}h,
h^{-1}\Gamma(g_k\alpha_{k-1}\beta_{k-1}g_k^{-1})h\right] \\
&=\Xi(B)\cdot h.
\end{aligned}
\]

Next, define
\[
c_i:=\det(h_i)=\prod_{a=1}^i z_a^{d_a-d_{a-1}}.
\]
Then, after choosing a $d_i$th root of $c_i$, we have $\widetilde h_i:=c_i^{-1/d_i}h_i\in\SL_{d_i}$. Since $(\widetilde h_1,\dots,\widetilde h_{k-1})\in\SL_{\mathbf v}$, replacing $h_i$ by $\widetilde h_i$ in \eqref{L/[L,L] action on quiver} gives the identity map on the quotient $m_{\mathrm{inj}}^{-1}(0)/\SL_{\mathbf v}$. Hence the $L/[L,L]$-action can equivalently be represented by the following scalar action:
\begin{equation}\label{modified L/[L,L] action}
\begin{aligned}
\alpha_i &\mapsto c_{i+1}^{-1/d_{i+1}}c_i^{1/d_i}\alpha_i,
&
\beta_i &\mapsto c_i^{-1/d_i}c_{i+1}^{1/d_{i+1}}\beta_i,
\qquad 1\leq i\leq k-2,\\
\alpha_{k-1} &\mapsto c_{k-1}^{1/d_{k-1}}\alpha_{k-1},
&
\beta_{k-1} &\mapsto c_{k-1}^{-1/d_{k-1}}\beta_{k-1}.
\end{aligned}
\end{equation}
The resulting action on the quotient is independent of the choices of roots. We summarize this discussion with the following proposition.

\begin{prop}\label{DKS equivariance}
The DKS isomorphism $\Xi:m_{\mathrm{inj}}^{-1}(0)/\SL_{\mathbf v}\rightarrow T^*(\SL_n/[P,P])$ is $\SL_n\times L/[L,L]$-equivariant, where the action on $m_{\mathrm{inj}}^{-1}(0)/\SL_{\mathbf v}$ is defined in Equations~\ref{SL_n action on quiver} and~\ref{L/[L,L] action on quiver}.
\end{prop}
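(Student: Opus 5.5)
The argument splits into the two factors of $\SL_n\times L/[L,L]$. Since the two actions commute on both sides, equivariance for each factor separately implies equivariance for the product.

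\emph{Well-definedness of the actions on the quiver side.} I first check that the formulas in Equations~\ref{SL_n action on quiver} and~\ref{L/[L,L] action on quiver} preserve $m_{\mathrm{inj}}^{-1}(0)$. For the $\SL_n$-action, $\alpha_{k-1}\beta_{k-1}$ is only conjugated and $\beta_{k-1}\alpha_{k-1}$ is unchanged, so the moment map equations \eqref{moment map DKS} survive. For the torus action, each product $\alpha_{i-1}\beta_{i-1}$ and $\beta_i\alpha_i$ is conjugated by $h_i$. Hence $\lambda_i\Id_{d_i}$ is preserved. Injectivity of the $\alpha_i$ is clearly preserved in both cases. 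Both formulas commute with the $\SL_{\mathbf v}$-action, because they act through the framed vertex and through block-diagonal matrices $h_i$ that normalize $\SL_{d_i}$. So they descend to the geometric quotient.

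\emph{$\SL_n$-equivariance.} The key observation is that if $g_k\alpha_{k-1}=\alpha_{k-1}^\circ$, then $(g_kg^{-1})(g\alpha_{k-1})=\alpha_{k-1}^\circ$. The intermediate $g_1,\dots,g_{k-1}$ are unchanged. Substituting into \eqref{explicit DKS} gives the computation already displayed before Definition~\ref{L^ab action quiver variety}. Independence from the choice of $(g_i)$ is part of the well-definedness of $\Xi$ in Theorem~\ref{DKS quiver variety}.

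\emph{$L^{\mathrm{ab}}$-equivariance.} This is the main obstacle. The new representative $(\alpha_i')=(h_{i+1}^{-1}\alpha_ih_i)$ must be normalized by elements of $\prod\SL_{d_i}$, and the naive choice $h_i^{-1}g_ih_i$ lies in $\SL_{d_i}$, while $h_i$ itself does not. The plan is as follows.
\begin{enumerate}
\item Take $g_i'=h_i^{-1}g_ih_i$ for $i<k$ and $g_k'=h^{-1}g_k$. Since $\det h_i^{-1}g_ih_i=1$, these lie in $\SL_{d_i}$.
\item Verify that $g'_{i+1}\alpha'_i g_i'^{-1}=h_{i+1}^{-1}\alpha_i^\circ h_i=\alpha_i^\circ$. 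This uses that $\alpha_i^\circ h_i=h_{i+1}\alpha_i^\circ$, because $h_i$ is the upper-left block of $h_{i+1}$. At $i=k-1$ it uses $h\alpha_{k-1}^\circ=\alpha_{k-1}^\circ h_{k-1}$.
\item Plug into \eqref{explicit DKS}. Using that $\Gamma$ commutes with conjugation, this yields $[g_k^{-1}h,\,h^{-1}\Gamma(g_k\alpha_{k-1}\beta_{k-1}g_k^{-1})h]$.
\item Compare with \eqref{SL_n times L^ab action}: this expression is exactly $(1,h)\cdot\Xi(B)$. It is well defined on $\SL_n\times_{[P,P]}\Lie([P,P])^\perp$ because $Z(L)^\circ$ normalizes $[P,P]$ and preserves $\mathfrak u_P\oplus\mathfrak z(\mathfrak l)$, and $[L,L]\subset[P,P]$ acts trivially modulo the fiber-product relation.
\end{enumerate}

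Finally, I note that the scalar version \eqref{modified L/[L,L] action} differs from \eqref{L/[L,L] action on quiver} by the element $(\widetilde h_i)\in\SL_{\mathbf v}$. It therefore induces the same action on the quotient, so the proposition holds for either description.
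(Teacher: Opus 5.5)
Your proposal is correct and follows essentially the same argument as the paper. You make the same normalizer substitutions, $g_k\mapsto g_kg^{-1}$ for $\SL_n$ and $g_k\mapsto h^{-1}g_k$ for $L^{\mathrm{ab}}$ (the latter via $h\alpha^\circ_{k-1}=\alpha^\circ_{k-1}h_{k-1}$), plug them into the explicit formula for $\Xi$, and use that $\Gamma$ commutes with conjugation. You also spell out what the paper leaves implicit: the intermediate normalizers $h_i^{-1}g_ih_i$, that the quiver-side actions descend to the quotient, and that the target-side action is well defined.
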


\subsection{The reflection functors}

The main goal is to prove the following theorem, which generalizes Wang's main result to the parabolic case.

\begin{thm}\label{classical reflection functors}
Let $P$ be a standard parabolic subgroup of $\SL_n$ whose Levi subgroup $L$ has $k$ blocks. Let $w\in S_k$, and let $P^w$ be the corresponding standard parabolic subgroup whose Levi subgroup $L^w$ is obtained by permuting the blocks of $L$ by $w$. Then there exists an isomorphism of varieties
\[
\Phi_w(P,P^w):\overline{T^*(\SL_n/[P,P])}^{\mathrm{aff}}\rightarrow\overline{T^*(\SL_n/[P^w,P^w])}^{\mathrm{aff}}.
\]
Furthermore, by conjugating the parabolics $P$ and $P^w$, we may remove the assumption that they are standard.
\end{thm}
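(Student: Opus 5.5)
By Corollary~\ref{DKSAffinizedIso}, it suffices to construct isomorphisms of the DKS quiver varieties $m_{\mathrm{inj}}^{-1}(0)//\SL_{\mathbf v}\to m_{\mathrm{inj}}^{-1}(0)//\SL_{\mathbf v'}$, where $\mathbf v'$ is the dimension vector attached to $P^w$. I would also reduce to the case of a simple transposition $w=\sigma_i$ and define $\Phi_w$ as a composite along a reduced word. Well-definedness of that composite as a function of $w$ belongs to the Coxeter-relation theorem, not to this statement; for mere existence, any word will do.

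Swapping blocks $i$ and $i+1$ of $L$ changes only the $i$th entry of $\mathbf v$, namely $d_i\mapsto d_i'=d_{i-1}+d_{i+1}-d_i$. This is exactly the Weyl reflection at vertex $i$ for the quiver with framing $\mathbf w=(0,\dots,0,n)$. At vertex $i$ I would use the LMN/Nakajima reflection. Consider the short sequence
\[
\C^{d_i}\xrightarrow{a}\C^{d_{i-1}}\oplus\C^{d_{i+1}}\xrightarrow{b}\C^{d_i},\qquad a=(\beta_{i-1},\alpha_i),\quad b=\alpha_{i-1}+(-\beta_i)\ \text{(with the usual signs)}.
\]
The moment map at $i$ says $ba=\lambda_i\Id$. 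On a suitable open locus, $a$ is injective and $b$ is surjective. The new space is $\C^{d_i'}=\ker b$ when $\lambda_i=0$, or $\operatorname{coker} a$ in general. The new maps $\alpha_{i-1}',\beta_{i-1}',\alpha_i',\beta_i'$ come from the inclusion and projection, and the parameter is modified by $\lambda_i\mapsto-\lambda_i$, $\lambda_{i\pm1}\mapsto\lambda_{i\pm1}+\lambda_i$. Wang's $\SL$-modification replaces the $\GL_{d_i}$-torsor of bases of the new space by an $\SL_{d_i'}$-torsor. To do this, one fixes a volume form on $\C^{d_i'}$ using the canonical isomorphism $\det\ker b\otimes\det\C^{d_i}\simeq\det(\C^{d_{i-1}}\oplus\C^{d_{i+1}})$, which is trivialized by the $\SL$ data, and then adjusts by a torsion normalization. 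Here I would introduce a signed torsion convention, a sign $\epsilon(\mathbf v,i)$ multiplying the volume identification. Any fixed choice suffices for existence; only the Coxeter relations need a careful one.

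The main step is the choice of a dense open subset $U\subset m_{\mathrm{inj}}^{-1}(0)/\SL_{\mathbf v}$ on which the reflection is defined and lands in $m_{\mathrm{inj}}^{-1}(0)/\SL_{\mathbf v'}$. I would take $U$ to be the locus where $a$ is injective, $b$ is surjective, and the reflected $\alpha'_{i-1},\alpha'_i$ are again injective. I would define $U'$ symmetrically and check that the reflection maps $U$ to $U'$ with inverse given by reflecting back; this is the usual LMN involutivity, with the torsion signs cancelling. Density of $U$ follows from irreducibility of $m_{\mathrm{inj}}^{-1}(0)/\SL_{\mathbf v}\simeq T^*(\SL_n/[P,P])$ (Theorem~\ref{DKS quiver variety}) together with exhibiting one point of $U$. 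For instance, I would use $\alpha=\alpha^\circ$ with $\beta$ corresponding to a regular semisimple element of $\mathfrak z(\mathfrak l)$, where all $\lambda$'s are generic and the sequence above is split exact.

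Affinization is where I expect the real obstacle. An isomorphism $U\simeq U'$ gives $\C[U]\simeq\C[U']$, but we need $\Gamma(m_{\mathrm{inj}}^{-1}(0)/\SL_{\mathbf v})\simeq\Gamma(m_{\mathrm{inj}}^{-1}(0)/\SL_{\mathbf v'})$ inside these rings. Since the complement of $U$ may have codimension one, Hartogs does not apply. My plan is to show instead that the reflection's pullback sends global functions to global functions, and to do this on generators. Concretely, $\C[T^*(\SL_n/[P,P])]$ is generated by matrix coefficients of $g\mapsto g\cdot v$ for highest weight vectors fixed by $[P,P]$ (the Pl\"ucker-type functions $\alpha_{k-1}\cdots\alpha_j$ taken through $\wedge^{d_j}$), their $\beta$-analogues, and entries of $\Gamma(\alpha_{k-1}\beta_{k-1})$. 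I would compute the pullback of each generator under the reflection; in LMN coordinates these are polynomial expressions in the old data, possibly involving the determinant identification. If those expressions are regular on all of $m^{-1}_{\mathrm{inj}}(0)$, then the map extends, and the inverse reflection gives the inverse map. The danger is invariants whose pullback acquires poles along the divisor where $\lambda_i=0$ and $b$ fails to be surjective. If that happens, I would fall back on showing both affinizations equal $\Gamma$ of a common intermediate space, namely the $\SL_n\times L^{\mathrm{ab}}$-representation decomposition, and compare multiplicities via the equivariant identification. Finally, for non-standard parabolics, one conjugates by an element $x$ carrying $P,P^w$ to standard ones and transports $\Phi$ via $x$.
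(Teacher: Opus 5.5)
\paragraph{The step left unproved.} Several of your reductions match the paper's: passing to DKS quiver varieties via Corollary~\ref{DKSAffinizedIso}, reducing to simple reflections composed along a word, using an $\SL$-normalized LMN correspondence with a torsion sign, and conjugating for non-standard parabolics. The gap is exactly the step you flag as the real obstacle. Nothing in the proposal shows that the isomorphism of open loci induces an isomorphism of rings of global functions. The generator computation is only a hope, and you yourself anticipate it may fail. The fallback is not an argument: matching $\SL_n\times L^{\mathrm{ab}}$-multiplicities does not by itself give a ring isomorphism. At the very least, one of the two subrings of $\C[U]$ would have to contain the other, and that is the original problem.

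\paragraph{A related inaccuracy.} The LMN correspondence is not symmetric. It pairs the locus where $\operatorname{in}_i(B)=(-\alpha_{i-1}\ \beta_i)$ is surjective on the source with the locus where $\operatorname{out}_i(B')$ is injective on the target. Reflecting back inverts it only where $\lambda_i\neq0$. So a symmetrically defined $U'$ is not the image of $U$: for $(1)-[3]\to(2)-[3]$, the image contains points where $\beta_1'\colon\C^3\to\C^2$ has rank one.

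\paragraph{The missing idea.} The codimension of the discarded locus depends on the direction of the swap, so you should only ever extend in the good direction.
\begin{itemize}
\item Off $\Lambda_i^{\mathrm{surj}}$ one has $\lambda_i=0$; this follows by reducing the moment map equation modulo $\operatorname{Im}\alpha_{i-1}$.
\item In a chart where $\alpha_{i-1},\alpha_i$ are standard inclusions, surjectivity of $\operatorname{in}_i(B)$ reduces to surjectivity of a block $\C^{s_{i+1}}\to\C^{s_i}$ of $\beta_i$. This block is unconstrained on $\{\lambda_i=0\}$.
\item Hence $m_{\mathrm{inj}}^{-1}(0)\setminus\Lambda_i^{\mathrm{surj}}$ has codimension $1$ when $s_{i+1}<s_i$, but codimension $2+s_{i+1}-s_i\geq 2$ when $s_i\leq s_{i+1}$.
\end{itemize}
The paper therefore constructs $\Phi_i(P,P')$ directly only when $s_i\leq s_{i+1}$. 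In that case, Hartogs on the smooth variety $m^{-1}_{P,\mathrm{inj}}(0)$ gives $\C[m^{-1}_{\mathrm{inj}}(0)]=\C[\Lambda_i^{\mathrm{surj}}]$, and the principal-bundle structure of the zipper identifies the invariant rings. For $s_i>s_{i+1}$ it sets $\Phi_i(P,P'):=\Phi_i(P',P)^{-1}$. The codimension-one divisor you worry about occurs only in the direction that is never extended across, and no computation with generators is needed.

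\paragraph{A remark on the target side.} The paper asserts that $p'$ is onto $m^{-1}_{P',\mathrm{inj}}(0)$. Strictly, the reconstructed $\alpha_i$ is injective exactly off the locus $\{\lambda_i(B')=0,\ \operatorname{rk}\operatorname{in}_i(B')<d_i\}$. The same chart computation, now with a block $\C^{s_i}\to\C^{s_{i+1}}$ required to be injective, shows this locus also has codimension $2+s_{i+1}-s_i$; compare Proposition~\ref{reflectionfunctorsl3prop}. So Hartogs applies on that side too.
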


Let us first define the reflection functors for simple reflections $\sigma_i\in S_k$.

\begin{defn}
Fix an index $i$, with $1\leq i\leq k-1$. Let $P$ be the standard parabolic whose Levi subgroup has block sizes $s_j:=d_j-d_{j-1}$, where $0=d_0<d_1<d_2<\dots<d_k=n$. Define $d_i':=d_{i-1}+d_{i+1}-d_i$ and $d_j':=d_j$ for $j\neq i$. Then $0=d_0'<d_1'<\dots<d_k'=n$. Let $P'$ be the standard parabolic whose Levi subgroup has block sizes $s_j':=d_j'-d_{j-1}'$. We call such a pair of parabolics $(P,P')$ \textit{$i$-swapped}.
\end{defn}

Thus, for an $i$-swapped pair of parabolics $(P,P')$, we have $s_i'=s_{i+1}$ and $s_{i+1}'=s_i$, while $s_j'=s_j$ for $j\neq i,i+1$. Thus $P$ and $P'$ are standard parabolics whose Levi subgroups are obtained from one another by swapping the $i$th and $(i+1)$st blocks.

Corresponding to the pair of parabolics $(P,P')$, consider the induced moment maps on quiver varieties:
\[
m_P:T^*V\rightarrow\mathfrak{sl}_{\mathbf{v}}^*,\qquad m_{P'}:T^*V'\rightarrow\mathfrak{sl}_{\mathbf{v}'}^*.
\] 

Let $B=(\alpha_i,\beta_i)_i\in m_{\mathrm{inj}}^{-1}(0)$ denote an arbitrary element. Fix the convention $\C^{d_0}=0$ and $\alpha_0=\beta_0=0$. For $1\leq i\leq k-1$, define the following two linear maps:
\begin{align}
\operatorname{out}_i(B)&:=
\left(\begin{smallmatrix}
\beta_{i-1}\\
\alpha_i
\end{smallmatrix}\right)
:\C^{d_i}\rightarrow \C^{d_{i-1}+d_{i+1}},\\
\operatorname{in}_i(B)&:=
\left(\begin{smallmatrix}
-\alpha_{i-1} & \beta_i
\end{smallmatrix}\right)
:\C^{d_{i-1}+d_{i+1}}\rightarrow\C^{d_i}.
\end{align}
In order to define the reflection functors, we must define the following two open subsets of $m_{\mathrm{inj}}^{-1}(0)$.

\begin{defn}
Under the notation in the preceding paragraph, define
\begin{align}
\Lambda_i^{\mathrm{inj}}&:=\{B\in m_{\mathrm{inj}}^{-1}(0): \operatorname{out}_i(B) \text{ is injective}\}, \\
\Lambda_i^{\mathrm{surj}}&:=\{B\in m_{\mathrm{inj}}^{-1}(0): \operatorname{in}_i(B) \text{ is surjective}\}.
\end{align}
Since $\alpha_i$ is injective for all $i$ on $m_{\mathrm{inj}}^{-1}(0)$, the map $\operatorname{out}_i(B)$ is automatically injective. Hence $\Lambda_i^{\mathrm{inj}}=m_{\mathrm{inj}}^{-1}(0)$. When we wish to specify the parabolic $P$ which defines the moment map, we write $\Lambda_{P,i}^{\mathrm{inj}}$ and $\Lambda_{P,i}^{\mathrm{surj}}$.
\end{defn}

Here is the main definition.

\begin{defn}\label{zipper}
Fix an index $i$, with $1\leq i\leq k-1$. Let $(P,P')$ be a pair of $i$-swapped parabolics. Fix a nonzero scalar $\tau_i:=\tau_i(P,P')\in\C^*$ depending on $i$, $P$, and $P'$. Let $B=(\alpha_j,\beta_j)_{j=1}^{k-1}$ denote an element of $m_{P,\mathrm{inj}}^{-1}(0)$, and similarly let $B'=(\alpha_j',\beta_j')_{j=1}^{k-1}\in m_{P',\mathrm{inj}}^{-1}(0)$.

Let
\[
\mu_i(B):=\alpha_{i-1}\beta_{i-1}-\beta_i\alpha_i\in\mathfrak{gl}_{d_i}
\]
denote the $i$th component of the $\mathfrak{gl}_{\mathbf v}$-valued moment map. Since $B\in m_P^{-1}(0)$, we have $\mu_i(B)=\lambda_i(B)\Id_{d_i}$ for some scalar $\lambda_i(B)\in\C$.

Define
\[
Z_i:=Z_i^{\tau_i}(P,P')\subset \Lambda_{P,i}^{\mathrm{surj}}\times\Lambda_{P',i}^{\mathrm{inj}}
\]
to be the subvariety consisting of pairs $(B,B')\in \Lambda_{P,i}^{\mathrm{surj}}\times\Lambda_{P',i}^{\mathrm{inj}}$ satisfying the following conditions:

\begin{enumerate}
    \item[(C1)] $\alpha_j=\alpha_j'$ and $\beta_j=\beta_j'$ for all $j\notin\{i-1,i\}$.

    \item[(C2)] The following sequence is short exact:
    \begin{equation}\label{SES zipper}
        0\rightarrow \C^{d_i'}\xrightarrow{\operatorname{out}_i(B')}\C^{d_{i-1}}\oplus\C^{d_{i+1}}\xrightarrow{\operatorname{in}_i(B)}\C^{d_i}\rightarrow 0,
    \end{equation}
    where
    \[
    \operatorname{out}_i(B')=
    \begin{psmatrix}
        \beta_{i-1}'\\
        \alpha_i'
    \end{psmatrix},
    \qquad
    \operatorname{in}_i(B)=
    \begin{psmatrix}
        -\alpha_{i-1} & \beta_i
    \end{psmatrix}.
    \]
    Furthermore, the short exact sequence \eqref{SES zipper} has torsion $\tau_i\in\C^*$ in the sense of \cite[Ch.~1]{Turaev2002Torsions}. Concretely, this means that if we fix a volume form $\mathrm{vol}_j\in\bigwedge^j\C^j$ for each $j$, then
    \[
    \operatorname{out}_i(B')(\mathrm{vol}_{d_i'})\wedge \operatorname{in}_i(B)^{-1}(\mathrm{vol}_{d_i})
    =
    \tau_i\,\mathrm{vol}_{d_{i-1}}\wedge\mathrm{vol}_{d_{i+1}},
    \]
    where $\operatorname{in}_i(B)^{-1}(\mathrm{vol}_{d_i})$ denotes any exterior lift of $\mathrm{vol}_{d_i}$ along $\operatorname{in}_i(B)$.

    \item[(C3)] The following compatibility with the moment map holds:
    \[
    \operatorname{out}_i(B')\operatorname{in}_i(B')
    =
    \operatorname{out}_i(B)\operatorname{in}_i(B)
    +\lambda_i(B)\Id_{\C^{d_{i-1}}\oplus\C^{d_{i+1}}}
    :
    \C^{d_{i-1}}\oplus\C^{d_{i+1}}\rightarrow \C^{d_{i-1}}\oplus\C^{d_{i+1}}.
    \]
\end{enumerate}
\end{defn}

\noindent We again emphasize that although the constructions of the $\Lambda_i$ and $Z_i$ look similar to those in Wang, it is crucial that we work only inside the smooth loci $m_{\mathrm{inj}}^{-1}(0)$. Wang sets $\tau_i(P,P')=1$ for all $i$, $P$, and $P'$, but, as we will see when verifying the braid relations, it is important to allow an arbitrary nonzero scalar.

The following diagram illustrates the maps appearing in the definition of $Z_i$. The condition (C2) says that the sequence formed in the middle by taking the upward-pointing arrows is short exact:
\[
\begin{tikzcd}
                                                       &                                                                   &                                                                                                                        & \overset{d_i}{\bullet} \arrow[rd, "\alpha_i", shift left] \arrow[ld, "\beta_{i-1}"]    &                                                                                                                       &                                                                   &                                             \\
\overset{d_1}{\bullet} \arrow[r, "\alpha_1", shift left] & \cdots \arrow[r, "\alpha_{i-2}", shift left] \arrow[l, "\beta_1"] & \overset{d_{i-1}}{\bullet} \arrow[ru, "\alpha_{i-1}", shift left] \arrow[rd, "\alpha_{i-1}'", shift left] \arrow[l, "\beta_{i-2}"] &                                                                                            & \overset{d_{i+1}}{\bullet} \arrow[r, "\alpha_{i+1}", shift left] \arrow[lu, "\beta_i"] \arrow[ld, "\beta_i'"] & \cdots \arrow[l, "\beta_{i+1}"] \arrow[r, "\alpha_{k-1}", shift left] & \overset{d_k}{\bullet} \arrow[l, "\beta_{k-1}"] \\
&& &\overset{d_i'}{\bullet} \arrow[ru, "\alpha_i'", shift left] \arrow[lu, "\beta_{i-1}'"] &    & &  
\end{tikzcd}
\]
At the endpoints, we interpret nonexistent arrows using the convention $\C^{d_0}=0$ and $\alpha_0=\beta_0=0$. We will refer to this diagram representing $Z_i$ as the \textit{$i$th zipper}. Note that there is a standard action of
\[
G_{i,\mathbf v}:=\SL_{d_i'}\times \SL_{d_i}\times \prod_{\substack{1\leq l\leq k-1\\ l\neq i}}\SL_{d_l}
\]
on $Z_i$. Also note that $m_{P,\mathrm{inj}}^{-1}(0)$ and $m_{P',\mathrm{inj}}^{-1}(0)$ are $G_{i,\mathbf v}$-invariant.

\noindent The following Proposition follows directly from \cite[Lemma~33]{Maffei2002RemarkQuiverWeyl} and \cite[Lemma~3.5.3]{Wang2021GelfandGraev}.
\begin{prop}\label{zipper torsor}  Let $(P,P')$ be a pair of $i$-swapped parabolics. The natural projection $p:Z_i\rightarrow \Lambda_{P,i}^{\mathrm{surj}}$ is a principal $\SL_{d_i'}$-bundle, and the projection $p':Z_i\rightarrow \Lambda_{P',i}^{\mathrm{inj}}$ is a principal $\SL_{d_i}$-bundle.
\end{prop}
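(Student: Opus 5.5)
The proof is a direct construction: we build local sections of $p$ and $p'$ and check that the fibers are single free orbits of the relevant gauge factor. Throughout, $\operatorname{out}_i(B)$ is injective and $\operatorname{in}_i(B)$ is surjective on the loci in question, and the exact sequence \eqref{SES zipper} forces $d_{i-1}+d_{i+1}=d_i'+d_i$, which matches the definition of $d_i'$.

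\emph{The projection $p$.} Fix $B\in\Lambda_{P,i}^{\mathrm{surj}}$. Condition (C1) fixes every arrow of $B'$ except $\alpha_{i-1}',\beta_{i-1}',\alpha_i',\beta_i'$. Condition (C2) says that $\operatorname{out}_i(B')$ is an isomorphism from $\C^{d_i'}$ onto $K:=\ker\operatorname{in}_i(B)$, and that this isomorphism has prescribed torsion $\tau_i$.
\begin{itemize}
\item Choose any isomorphism $\C^{d_i'}\xrightarrow{\sim}K$ and rescale one basis vector so that the torsion equals $\tau_i$. The set of such isomorphisms is a torsor under $\SL_{d_i'}$, acting by precomposition, because the torsion changes by the determinant of the precomposed map.
\item Next, condition (C3) determines $\operatorname{in}_i(B')$. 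Since $\operatorname{out}_i(B)\operatorname{in}_i(B)+\lambda_i\Id$ kills $K$ up to the term $\lambda_i\Id$, we must show that it maps into $K=\operatorname{im}\operatorname{out}_i(B')$. This follows from $\operatorname{in}_i(B)\operatorname{out}_i(B)=-\mu_i(B)=-\lambda_i\Id$, since then $\operatorname{in}_i(B)\bigl(\operatorname{out}_i(B)\operatorname{in}_i(B)+\lambda_i\Id\bigr)=0$.
\item Because $\operatorname{out}_i(B')$ is injective, we can solve uniquely for $\operatorname{in}_i(B')$. This gives $(-\alpha_{i-1}',\beta_i')$.
\end{itemize}
It remains to check that $B'\in m_{P',\mathrm{inj}}^{-1}(0)$. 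This means verifying the moment map equations at vertices $i-1$, $i$ and $i+1$ in the form \eqref{moment map DKS}, together with injectivity of $\alpha_{i-1}'$ and $\alpha_i'$. This is the step I expect to be the main obstacle, because we are forced to stay inside the injective locus and the moment map takes values in $\mathfrak{sl}$.

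At vertex $i$ the equation is $\operatorname{in}_i(B')\operatorname{out}_i(B')=-\lambda_i\Id$ (the sign is opposite), which follows by multiplying (C3) by $\operatorname{out}_i(B')$. At vertices $i\pm1$, the diagonal blocks of (C3) shift $\alpha_{i-1}\beta_{i-1}$ and $\beta_i\alpha_i$ by $\lambda_i\Id$. Hence the neighbouring scalars change as $\lambda_{i\pm1}\mapsto\lambda_{i\pm1}+\lambda_i$, so the equations still hold. This is Maffei's Lemma 33, and it is verified block by block.

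For injectivity of $\alpha_i'$: its image is the second component of $K$. If $\alpha_i'v=0$, then $\beta_{i-1}'v=w$ satisfies $\alpha_{i-1}w=0$, and injectivity of $\alpha_{i-1}$ forces $w=0$. So $v=0$ by injectivity of $\operatorname{out}_i(B')$. Injectivity of $\alpha_{i-1}'$ should be deduced from the vertex $i-1$ relation combined with injectivity of $\alpha_{i-1}$ and $\alpha_i'$; I would check this using the off-diagonal block of (C3).

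\emph{Local triviality of $p$.} The choices above are made algebraically in $B$ over Zariski, or at worst étale, open sets where a fixed coordinate minor of $\operatorname{in}_i(B)$ is invertible. Over each such set, a basis of $K$ is given by Cramer-type formulas. This yields local sections, so $p$ is a principal $\SL_{d_i'}$-bundle.

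\emph{The projection $p'$.} The argument is symmetric. Given $B'\in\Lambda_{P',i}^{\mathrm{inj}}$, the map $\operatorname{in}_i(B)$ is a surjection onto $\C^{d_i}$ with kernel $\operatorname{im}\operatorname{out}_i(B')$, that is, an isomorphism from the cokernel of $\operatorname{out}_i(B')$ to $\C^{d_i}$ with fixed torsion. Such maps form a torsor under $\SL_{d_i}$. Condition (C3) then determines $\operatorname{out}_i(B)$ through the relation $\operatorname{out}_i(B)\operatorname{in}_i(B)=\operatorname{out}_i(B')\operatorname{in}_i(B')-\lambda_i\Id$. Here $\lambda_i$ is recovered as minus the scalar $\lambda_i(B')$, and we divide by the surjection $\operatorname{in}_i(B)$. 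The right-hand side factors through $\operatorname{in}_i(B)$ because it kills $\operatorname{im}\operatorname{out}_i(B')$.

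Landing in $\Lambda^{\mathrm{surj}}_{P,i}$ is automatic. Landing in $m^{-1}_{P,\mathrm{inj}}(0)$ follows from the same block computations as before, run in reverse.
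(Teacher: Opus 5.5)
Your treatment of $p$ follows the paper's argument. Frames of $K=\ker\operatorname{in}_i(B)$ form a $\GL_{d_i'}$-torsor, the torsion condition cuts this down to an $\SL_{d_i'}$-torsor, and (C3) is solved uniquely for $\operatorname{in}_i(B')$ because $\operatorname{in}_i(B)\bigl(\operatorname{out}_i(B)\operatorname{in}_i(B)+\lambda_i\Id\bigr)=0$. You also spell out the local sections and the injectivity of $\alpha_i'$, which the paper leaves to Maffei. Two small corrections:
\begin{enumerate}
\item[(a)] Multiplying (C3) on the right by $\operatorname{out}_i(B')$ gives $\operatorname{in}_i(B')\operatorname{out}_i(B')=+\lambda_i\Id$, not $-\lambda_i\Id$. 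Equivalently $\mu_i(B')=-\lambda_i\Id$, which is what you correctly use later.
\item[(b)] Injectivity of $\alpha_{i-1}'$ is immediate from the lower-left block of (C3), $\alpha_i'\alpha_{i-1}'=\alpha_i\alpha_{i-1}$.
\end{enumerate}

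\textbf{The gap is in the $p'$ half}, at ``landing in $m^{-1}_{P,\mathrm{inj}}(0)$ follows from the same block computations, run in reverse.'' The reverse construction does give:
\begin{enumerate}
\item[(a)] surjectivity of $\operatorname{in}_i(B)$;
\item[(b)] the moment-map equations;
\item[(c)] injectivity of $\alpha_{i-1}$, again from $\alpha_i\alpha_{i-1}=\alpha_i'\alpha_{i-1}'$.
\end{enumerate}
It does not give injectivity of $\alpha_i$.

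Your forward argument for $\alpha_i'$ used $\operatorname{im}\operatorname{out}_i(B')=\ker\operatorname{in}_i(B)$. Its mirror would need $\operatorname{out}_i(B)$ injective with image $\ker\operatorname{in}_i(B')$, and that holds only where $\lambda_i\neq0$. On $\lambda_i=0$, the lower block row of (C3) reads $\alpha_i\operatorname{in}_i(B)=\alpha_i'\operatorname{in}_i(B')$. Hence $\operatorname{rank}\alpha_i=\operatorname{rank}\operatorname{in}_i(B')$, which can be smaller than $d_i$.

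Concretely, take $Q=(1)-[2]$ (where $s_1=s_2$) and $B'=(\alpha_1',\beta_1')=(e_1,0)\in\Lambda^{\mathrm{inj}}_{P',1}$. Then (C2) forces $\beta_1$ to be a nonzero multiple of $e_2^*$. Condition (C3) becomes $0=\alpha_1\beta_1+\lambda_1(B)\Id_2$, which forces $\lambda_1(B)=0$ and $\alpha_1=0$. So the fiber $p'^{-1}(B')$ is empty.

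Likewise, in the paper's $\SL_3$ example (Proposition~\ref{reflectionfunctorsl3prop}), $p'$ maps onto $\{(A,B)\in\mathcal U: B\neq0\}$, a proper open subset of $\Lambda^{\mathrm{inj}}_{P',1}=\mathcal U$. So $p'$ is not surjective, and the statement is false as literally written. The paper's sketch (``the proof for $p'$ is analogous'') has the same gap. The analogy works for Maffei and Wang because their $\Lambda^{\mathrm{surj}}$ imposes no injectivity on $\alpha_i$; here it does.

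\textbf{What is true, and all that the definition of $\psi_i$ needs.} The map $p'$ is a principal $\SL_{d_i}$-bundle over its image $\Lambda'\subset\Lambda^{\mathrm{inj}}_{P',i}$. Your torsor and local-section arguments go through verbatim there. By the lower block row of (C3), $\alpha_i\operatorname{in}_i(B)$ is determined by $B'$, and $\operatorname{in}_i(B)$ is surjective. Hence $\operatorname{rank}\alpha_i$ is constant along each would-be fiber, so each fiber is either a full torsor or empty.

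The image $\Lambda'$ is open and contains $\{\lambda_i\neq0\}$. On $\{\lambda_i=0\}$ it is the locus $\operatorname{rank}\operatorname{in}_i(B')=d_i$. In the chart of the paper's codimension proposition, applied to $P'$, this says the block $\C^{s_i}\to\C^{s_{i+1}}$ is injective.

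Hence for $s_i\le s_{i+1}$ (the only case in which $p'$ enters Definition~\ref{reflection functor on simples}), the complement of $\Lambda'$ has codimension $2+s_{i+1}-s_i\geq2$ in the smooth variety $m^{-1}_{P',\mathrm{inj}}(0)$. Hartogs then gives $\C[\Lambda']^{\SL_{\mathbf v'}}=\C[\Lambda^{\mathrm{inj}}_{P',i}]^{\SL_{\mathbf v'}}$, so $\psi_i$ is still well defined. You should state and prove this corrected version rather than surjectivity of $p'$.
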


\begin{proof}
We sketch the proof for $p:Z_i\rightarrow \Lambda_{P,i}^{\mathrm{surj}}$. Since $\operatorname{in}_i(B)$ is surjective, $\ker \operatorname{in}_i(B)$ has dimension $d_i'$. Thus exactness of \eqref{SES zipper} is equivalent to choosing an isomorphism $\C^{d_i'}\simeq \ker \operatorname{in}_i(B)$. Such choices form a $\GL_{d_i'}$-torsor, and imposing torsion $\tau_i$ cuts this down to an $\SL_{d_i'}$-torsor.

It remains only to note that, once $\operatorname{out}_i(B')$ has been chosen, condition (C3) uniquely determines $\operatorname{in}_i(B')$. Indeed, set
\[
T_i:=\C^{d_{i-1}}\oplus\C^{d_{i+1}}
\]
and
\[
A:=\operatorname{out}_i(B)\operatorname{in}_i(B)+\lambda_i(B)\Id_{T_i}.
\]
Since $\operatorname{in}_i(B)\operatorname{out}_i(B)=-\lambda_i(B)\Id_{\C^{d_i}}$, we have $\operatorname{in}_i(B)A=0$. Hence $\operatorname{Im}(A)\subseteq\ker \operatorname{in}_i(B)=\operatorname{Im}(\operatorname{out}_i(B'))$. Since $\operatorname{out}_i(B')$ is injective, there is a unique map $\operatorname{in}_i(B')$ such that
\[
\operatorname{out}_i(B')\operatorname{in}_i(B')=A.
\]
This is exactly condition (C3). The verification that the resulting $B'$ lies in $\Lambda_{P',i}^{\mathrm{inj}}$ and satisfies the moment-map equations is the same as in \cite[Lemma~33]{Maffei2002RemarkQuiverWeyl}. The proof for $p'$ is analogous.
\end{proof}

Thus, given a pair of $i$-swapped parabolics $(P,P')$, we have the following isomorphisms:
\[
\begin{tikzcd}
        &{\C[Z_i]^{G_{i,\mathbf{v}}}} &                                                                   \\
{\C[\Lambda_{P,i}^{\mathrm{surj}}]^{\SL_{\mathbf{v}}}} \arrow[ru, "p^*"] &                             & {\C[\Lambda_{P',i}^{\mathrm{inj}}]^{\SL_{\mathbf{v}'}}}. \arrow[lu, "(p')^*"']
\end{tikzcd}
\]

For each $i$, with $1\leq i\leq k-1$, define the isomorphism
\[
\psi_i^{\tau_i}:=(p^*)^{-1}(p')^*:\C[\Lambda_{P',i}^{\mathrm{inj}}]^{\SL_{\mathbf{v}'}}\rightarrow \C[\Lambda_{P,i}^{\mathrm{surj}}]^{\SL_{\mathbf{v}}}.
\]
We will often omit the dependence of $\psi_i^{\tau_i}$ on the torsion scalar $\tau_i(P,P')$ from the notation, since it plays no role until we check the braid relations.

\begin{prop}
Let $s_i:=d_i-d_{i-1}$. Then
\begin{equation*}
\operatorname{codim}_{m_{\mathrm{inj}}^{-1}(0)}\big(m_{\mathrm{inj}}^{-1}(0)\setminus\Lambda_i^{\mathrm{surj}}\big)
=
\begin{cases}
1 &\text{if } s_{i+1}<s_i,\\
2+s_{i+1}-s_i &\text{if } s_{i+1}\geq s_i.
\end{cases}
\end{equation*}
\end{prop}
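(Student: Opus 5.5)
The plan is to reduce the computation to the DKS normal form used in the proof of Theorem~\ref{DKS quiver variety}, and then read off the non-surjectivity condition as an explicit determinantal condition in free linear coordinates.

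\textbf{Step 1 (reduction to a slice).} By the proof of Proposition~\ref{oriented basic affine}, every $B\in m_{\mathrm{inj}}^{-1}(0)$ can be moved by $\prod_{j=1}^{k}\SL_{d_j}$ to a point with $\alpha_j=\alpha_j^\circ$ for all $j$. Let $S\subset m_{\mathrm{inj}}^{-1}(0)$ be the slice consisting of points with $\alpha_j=\alpha_j^\circ$ for all $j$. Then
\[
m_{\mathrm{inj}}^{-1}(0)\simeq \big(\SL_{\mathbf v}\times\SL_n\big)\times_H S,
\]
with $H\simeq[P,P]$, acting on the $\SL_{\mathbf v}$-factor and, through the framing, the $\SL_n$-factor.

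The condition ``$\operatorname{in}_i(B)$ is surjective'' is invariant under $\SL_{\mathbf v}$, since it is a rank condition. It is also unaffected by the $\SL_n$ action, because $\operatorname{in}_i$ only involves $\alpha_{i-1}$ and $\beta_i$, and for $i\le k-1$ these are twisted by $\SL_n$ only when $i=k-1$, where $\operatorname{in}_i$ changes to $\operatorname{in}_i\circ(\Id\oplus g^{-1})$, which has the same rank.

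Hence the codimension of the bad locus in $m_{\mathrm{inj}}^{-1}(0)$ equals the codimension of its intersection with $S$ inside $S$.

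\textbf{Step 2 (coordinates on $S$).} From the moment map equations \eqref{moment map DKS} with $\alpha_i=\alpha_i^\circ$, we can write
\[
\beta_i=\big(\alpha_{i-1}\beta_{i-1}-\lambda_i\Id_{d_i}\;\big|\;E_i\big)
\]
with respect to $\C^{d_{i+1}}=\C^{d_i}\oplus\C^{s_{i+1}}$. Here $E_i:\C^{s_{i+1}}\to\C^{d_i}$ is arbitrary. Inductively, $S$ is an affine space with free coordinates $(\lambda_1,E_1,\dots,\lambda_{k-1},E_{k-1})$. This is exactly the parametrization implicit in the proof of Theorem~\ref{DKS quiver variety}.

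I expect verifying this step to be the main (though mild) obstacle. One must check carefully that the $\lambda_j$ and all entries of the $E_j$ are genuinely independent, so that $S$ really is an affine space. In particular, $\lambda_i$ must not be constrained by trace conditions. It is not: we only impose $\mathfrak{sl}$-moment map conditions, so the scalars $\lambda_j$ are free.

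\textbf{Step 3 (the determinantal condition).} In normal form, $\operatorname{Im}(\alpha_{i-1})=\C^{d_{i-1}}\subset\C^{d_i}$. Therefore $\operatorname{in}_i(B)=(-\alpha_{i-1}\ \beta_i)$ is surjective if and only if $\pi\circ\beta_i:\C^{d_{i+1}}\to\C^{d_i}/\C^{d_{i-1}}\simeq\C^{s_i}$ is surjective.

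Since $\alpha_{i-1}\beta_{i-1}$ lands in $\C^{d_{i-1}}$, this composite is
\[
\big(\,(0\ \ {-\lambda_i}\Id_{s_i})\;\big|\;\overline{E}_i\,\big),
\]
where $\overline E_i:=\pi\circ E_i$ is a free $s_i\times s_{i+1}$ matrix. So the bad locus in $S$ is
\[
\{\lambda_i=0\}\times\{\operatorname{rank}\overline E_i<s_i\},
\]
a product of independent conditions in distinct coordinates.

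If $s_{i+1}<s_i$, the rank condition is automatic, and the codimension is $1$. If $s_{i+1}\ge s_i$, the standard codimension of the locus of $s_i\times s_{i+1}$ matrices of rank $<s_i$ is $s_{i+1}-s_i+1$. Adding $1$ for $\lambda_i=0$ gives $2+s_{i+1}-s_i$.

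For $i=1$, use the convention $d_0=0$, so $\alpha_0=0$ and $\pi=\Id$. The same computation applies verbatim.
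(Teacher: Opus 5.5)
Your proposal is correct and follows essentially the same route as the paper. Both arguments normalize the $\alpha$'s to standard inclusions and observe that $\lambda_i$ and the block $\overline{E}_i$ (the paper's $B_{23}:W_{i+1}\to W_i$) are independent free coordinates, so the bad locus is $\{\lambda_i=0\}$ times the determinantal locus of $s_i\times s_{i+1}$ matrices of rank $<s_i$. The only difference is that you justify the normalization globally via $m_{\mathrm{inj}}^{-1}(0)\simeq(\SL_{\mathbf v}\times\SL_n)\times_H S$, with $S$ an affine space in the coordinates $(\lambda_j,E_j)$; this makes explicit the paper's ``standard trivializing chart'' step and its check that the later moment-map equations do not constrain $B_{23}$.
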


\begin{proof}
Suppose $B:=(\alpha_j,\beta_j)\in m_{\mathrm{inj}}^{-1}(0)\setminus\Lambda_i^{\mathrm{surj}}$. Then
\[
\operatorname{in}_i(B)=
\begin{psmatrix}
-\alpha_{i-1} & \beta_i
\end{psmatrix}
:\C^{d_{i-1}}\oplus\C^{d_{i+1}}\rightarrow\C^{d_i}
\]
is not surjective. Since $\alpha_{i-1}$ is injective, $\operatorname{in}_i(B)$ is surjective if and only if the induced map
\[
\overline{\beta}_i:\C^{d_{i+1}}\rightarrow \C^{d_i}/\operatorname{Im}(\alpha_{i-1})
\]
is surjective. Let $\pi_i:\C^{d_i}\rightarrow\C^{d_i}/\operatorname{Im}(\alpha_{i-1})$ be the quotient map. Reducing the moment-map equation $\alpha_{i-1}\beta_{i-1}-\beta_i\alpha_i=\lambda_i(B)\Id_{d_i}$
modulo $\operatorname{Im}(\alpha_{i-1})$ gives
\[
-\overline{\beta}_i\alpha_i=\lambda_i(B)\pi_i.
\]
If $\lambda_i(B)\neq 0$, then $\overline{\beta}_i\alpha_i$ is surjective, and hence $\overline{\beta}_i$ is surjective. This contradicts the assumption that $\operatorname{in}_i(B)$ is not surjective. Therefore
\[
m_{\mathrm{inj}}^{-1}(0)\setminus\Lambda_i^{\mathrm{surj}}
\subseteq
\{\lambda_i=0\}\cap m_{\mathrm{inj}}^{-1}(0).
\]

On the locus $\lambda_i=0$, the relation above becomes $\overline{\beta}_i\alpha_i=0$. Hence $\overline{\beta}_i$ factors through a map
\[
\overline{\overline{\beta}}_i:\C^{d_{i+1}}/\operatorname{Im}(\alpha_i)\rightarrow \C^{d_i}/\operatorname{Im}(\alpha_{i-1}).
\]
Moreover, $B\notin\Lambda_i^{\mathrm{surj}}$ if and only if this induced map $\overline{\overline{\beta}}_i$ is not surjective.

This induced map is intrinsic as a map between quotient spaces, but not as a map into a fixed vector space. Since codimension is local, we now work on a standard trivializing chart. Choose local coordinates in which
\[
\alpha_{i-1}=
\begin{psmatrix}
\Id_{d_{i-1}}\\
0
\end{psmatrix}
:\C^{d_{i-1}}\rightarrow \C^{d_i},
\qquad
\alpha_i=
\begin{psmatrix}
\Id_{d_i}\\
0
\end{psmatrix}
:\C^{d_i}\rightarrow \C^{d_{i+1}}.
\]
Equivalently, write
\[
\C^{d_i}=\C^{d_{i-1}}\oplus W_i,
\qquad
\C^{d_{i+1}}=\C^{d_{i-1}}\oplus W_i\oplus W_{i+1},
\]
where $\dim W_i=s_i$ and $\dim W_{i+1}=s_{i+1}$. In these coordinates, write $\beta_i:\C^{d_{i+1}}\rightarrow\C^{d_i}$ in block form as
\[
\beta_i=
\begin{psmatrix}
B_{11} & B_{12} & B_{13}\\
B_{21} & B_{22} & B_{23}
\end{psmatrix}.
\]
Then the induced map $\overline{\overline{\beta}}_i:\C^{d_{i+1}}/\operatorname{Im}(\alpha_i)\rightarrow \C^{d_i}/\operatorname{Im}(\alpha_{i-1})$ is represented in this chart by the block
\[
B_{23}:W_{i+1}\rightarrow W_i.
\]

It remains to check that $B_{23}$ is a free variable after imposing the moment-map equations. Write
\[
\beta_{i-1}=
\begin{psmatrix}
C_{11} & C_{12}
\end{psmatrix}
:\C^{d_{i-1}}\oplus W_i\rightarrow \C^{d_{i-1}}.
\]
The moment-map equation $\alpha_{i-1}\beta_{i-1}-\beta_i\alpha_i=\lambda_i(B)\Id_{d_i}$
forces
\[
B_{11}=C_{11}-\lambda_i(B)\Id_{d_{i-1}},
\qquad
B_{12}=C_{12},
\qquad
B_{21}=0,
\qquad
B_{22}=-\lambda_i(B)\Id_{W_i}.
\]
In particular, on the locus $\lambda_i=0$ we have
\[
\beta_i=
\begin{psmatrix}
C_{11} & C_{12} & B_{13}\\
0 & 0 & B_{23}
\end{psmatrix}.
\]
Thus the equation at the $i$th vertex imposes no condition on $B_{23}$. If $i<k-1$, the next moment-map equation, after choosing the analogous standard chart for $\alpha_{i+1}$, gives
\[
\beta_{i+1}\alpha_{i+1}
=
\begin{psmatrix}
C_{11}-\lambda_{i+1}(B)\Id_{d_{i-1}} & C_{12} & B_{13}\\
0 & -\lambda_{i+1}(B)\Id_{W_i} & B_{23}\\
0 & 0 & -\lambda_{i+1}(B)\Id_{W_{i+1}}
\end{psmatrix}.
\]
This determines the corresponding block of $\beta_{i+1}$ but imposes no relation on $B_{23}$. If $i=k-1$, there is no next moment-map equation, so there is nothing further to check. The remaining moment-map equations do not involve $B_{23}$. Hence, on this chart, the following assignment is a projection:
\[
\pi_U:\{\lambda_i=0\}\cap m_{\mathrm{inj}}^{-1}(0)\cap U\rightarrow \Hom(W_{i+1},W_i),
\qquad
B\mapsto B_{23}.
\]

Let $D\subset \Hom(W_{i+1},W_i)$ be the determinantal locus of maps of rank at most $s_i-1$. On the chart $U$, the discussion above gives
\[
\big(m_{\mathrm{inj}}^{-1}(0)\setminus\Lambda_i^{\mathrm{surj}}\big)\cap U
=
\pi_U^{-1}(D).
\]
Here, the equality is understood inside $\{\lambda_i=0\}\cap m_{\mathrm{inj}}^{-1}(0)\cap U$. Since $\pi_U$ is a projection, we have
\[
\operatorname{codim}_{\{\lambda_i=0\}\cap m_{\mathrm{inj}}^{-1}(0)\cap U}\pi_U^{-1}(D)
=
\operatorname{codim}_{\Hom(W_{i+1},W_i)}D.
\]
The determinantal codimension is
\[
\operatorname{codim}_{\Hom(W_{i+1},W_i)}D
=
\begin{cases}
0 &\text{if } s_{i+1}<s_i,\\
s_{i+1}-s_i+1 &\text{if } s_{i+1}\geq s_i.
\end{cases}
\]
Finally, the same local coordinate calculation shows that $\lambda_i$ is a free coordinate, so the hypersurface $\{\lambda_i=0\}$ has codimension $1$ in $m_{\mathrm{inj}}^{-1}(0)$. Therefore
\[
\operatorname{codim}_{m_{\mathrm{inj}}^{-1}(0)}\big(m_{\mathrm{inj}}^{-1}(0)\setminus\Lambda_i^{\mathrm{surj}}\big)
=
1+\operatorname{codim}_{\Hom(W_{i+1},W_i)}D,
\]
which is precisely the stated formula.
\end{proof}

\begin{cor}\label{Hartogs for injective locus}
Suppose $s_i\leq s_{i+1}$. Then
\[
\C[m_{\mathrm{inj}}^{-1}(0)]=\C[\Lambda_i^{\mathrm{inj}}]
\qquad\text{and}\qquad
\C[m_{\mathrm{inj}}^{-1}(0)]=\C[\Lambda_i^{\mathrm{surj}}].
\]
\end{cor}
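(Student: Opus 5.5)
The first equality needs no argument: $\Lambda_i^{\mathrm{inj}}=m_{\mathrm{inj}}^{-1}(0)$ by the observation following the definition of $\Lambda_i^{\mathrm{inj}}$, since $\alpha_i$ is already injective on $m_{\mathrm{inj}}^{-1}(0)$. So all the content is in the second equality. There I would apply the algebraic Hartogs principle to the open inclusion $\Lambda_i^{\mathrm{surj}}\subset m_{\mathrm{inj}}^{-1}(0)$, exactly as in the proof of Corollary~\ref{generalized base affine quiver}.

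\textbf{Step 1: smoothness.} By the discussion after Definition~\ref{oriented fiber}, $dm$ is surjective at every point of $m_{\mathrm{inj}}^{-1}(0)$. Hence $m_{\mathrm{inj}}^{-1}(0)$ is a smooth variety of pure dimension $\dim T^*V-\dim\mathfrak{sl}_{\mathbf v}$, and in particular it is normal. Each of its connected components is therefore irreducible and normal.

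\textbf{Step 2: codimension.} The preceding Proposition gives, when $s_{i+1}\geq s_i$,
\[
\operatorname{codim}\big(m_{\mathrm{inj}}^{-1}(0)\setminus\Lambda_i^{\mathrm{surj}}\big)=2+s_{i+1}-s_i\geq 2 .
\]
In that proof the codimension was computed chart by chart. The charts (standard forms for $\alpha_{i-1},\alpha_i$ after a local $\GL$-change of basis) cover $m_{\mathrm{inj}}^{-1}(0)$, so the bound holds on every connected component.

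\textbf{Step 3: extension.} Let $f$ be a regular function on $\Lambda_i^{\mathrm{surj}}$. On each component, it is a rational function that is regular off a closed subset of codimension $\geq 2$. By normality, a function regular in codimension one is regular (this is Serre's criterion / Hartogs), so $f$ extends uniquely to a regular function on $m_{\mathrm{inj}}^{-1}(0)$. Restriction is injective because $\Lambda_i^{\mathrm{surj}}$ is open and dense: the complement has positive codimension in every component. Therefore the restriction map $\C[m_{\mathrm{inj}}^{-1}(0)]\to\C[\Lambda_i^{\mathrm{surj}}]$ is an isomorphism.

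\textbf{Main obstacle.} The delicate point is making sure the codimension bound genuinely applies on each irreducible component, i.e.\ that no component is contained in $\{\lambda_i=0\}$ with $B_{23}$ degenerate everywhere. I would handle this with the local description from the previous proof: near any point, $m_{\mathrm{inj}}^{-1}(0)$ is a product of a smooth chart with the free coordinates $\lambda_i$ and $B_{23}$. A closed subset cut out by $\lambda_i=0$ together with a determinantal condition on $B_{23}$ then has codimension at least $2$ in every local piece. This is a routine strengthening of the calculation already done.
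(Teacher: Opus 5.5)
Your proof is correct and follows the paper's argument: the first equality is the tautology $\Lambda_i^{\mathrm{inj}}=m_{\mathrm{inj}}^{-1}(0)$, and the second is Hartogs on the smooth (hence normal) variety $m_{\mathrm{inj}}^{-1}(0)$, using the codimension $2+s_{i+1}-s_i\geq 2$ from the preceding proposition. Your extra care about irreducible components is harmless but not needed: for injective $\alpha$, the moment conditions are linear in $\beta$ and of constant rank, so $m_{\mathrm{inj}}^{-1}(0)$ is a vector bundle over the irreducible open set $V^\circ$ and hence is itself irreducible.
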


\begin{proof}
The first equality follows from the fact that $\Lambda_i^{\mathrm{inj}}=m_{\mathrm{inj}}^{-1}(0)$. For the second equality, recall that $m_{\mathrm{inj}}^{-1}(0)$ is smooth, hence normal. Since $s_i\leq s_{i+1}$, the previous proposition implies that the complement of $\Lambda_i^{\mathrm{surj}}$ inside $m_{\mathrm{inj}}^{-1}(0)$ has codimension at least $2$. Thus Hartogs' lemma gives
\[
\Gamma(m_{\mathrm{inj}}^{-1}(0),\mathcal{O})
=
\Gamma(\Lambda_i^{\mathrm{surj}},\mathcal{O}),
\]
which is the desired equality.
\end{proof}

Finally, we may give the main definition.

\begin{defn}\label{reflection functor on simples}
Suppose $(P,P')$ is a pair of $i$-swapped parabolics, and suppose that the $i$th and $(i+1)$st block sizes of $P$ satisfy $s_i\leq s_{i+1}$. Fix a torsion scalar $\tau_i\in\C^*$. Define the isomorphism
\[
\Phi_i:=\Phi_i^{\tau_i}(P,P'):\overline{T^*(\SL_n/[P,P])}^{\mathrm{aff}}\rightarrow \overline{T^*(\SL_n/[P',P'])}^{\mathrm{aff}}
\]
to be the unique isomorphism making the following diagram commute:
\[
\begin{tikzcd}
{\overline{T^*(\SL_n/[P,P])}^{\mathrm{aff}}} \arrow[rr, "{\Phi_i^{\tau_i}(P,P')}"] \arrow[d, "\mathrm{DKS}"', Rightarrow, equals]                         &                                                   & {\overline{T^*(\SL_n/[P',P'])}^{\mathrm{aff}}} \arrow[d, "\mathrm{DKS}"', Rightarrow, equals]                                   \\
{\operatorname{Spec}\big(\C[m_{P,\mathrm{inj}}^{-1}(0)]^{\SL_{\mathbf{v}}}\big)} \arrow[d, "\mathrm{Hartogs}"', Rightarrow, equals]         &                                                   & {\operatorname{Spec}\big(\C[m_{P',\mathrm{inj}}^{-1}(0)]^{\SL_{\mathbf{v}'}}\big)} \arrow[d, Rightarrow, equals] \\
{\operatorname{Spec}\big(\C[\Lambda_{P,i}^{\mathrm{surj}}]^{\SL_{\mathbf{v}}}\big)} \arrow[rr, "{\operatorname{Spec}(\psi_i^{\tau_i})}"] &                                                   & {\operatorname{Spec}\big(\C[\Lambda_{P',i}^{\mathrm{inj}}]^{\SL_{\mathbf{v}'}}\big)}         \\
&{\operatorname{Spec}\big(\C[Z_i]^{G_{i,\mathbf{v}}}\big)} \arrow[lu, "{\operatorname{Spec}(p^*)}"] \arrow[ru, "{\operatorname{Spec}((p')^*)}"'] &
\end{tikzcd}
\]
Here, the ``Hartogs'' identification is the result of applying Corollary \ref{Hartogs for injective locus}. 

In the case $s_i>s_{i+1}$, define
\[
\Phi_i^{\tau_i}(P,P'):=\big(\Phi_i^{\tau_i}(P',P)\big)^{-1},
\]
where the right-hand side is defined by the previous case for the reversed pair $(P',P)$.
\end{defn}

\begin{proof}[Proof of Theorem~\ref{classical reflection functors}]
Suppose the torsion scalars $\tau_i(P,P')$ appearing in the definition of the zippers $Z_i^{\tau_i}(P,P')$ are chosen as in Definition~\ref{torsion choice}. Let $w=\sigma_{i_n}\cdots\sigma_{i_1}$ be a reduced expression in $S_k$. Starting with $P_0:=P$, define recursively $P_r:=P_{r-1}^{\sigma_{i_r}}$ for $1\leq r\leq n$, so that $P_n=P^w$. We define $\Phi_w(P,P^w):=\Phi_{i_n}(P_{n-1},P_n)\circ\cdots\circ\Phi_{i_1}(P_0,P_1)$. By Theorem~\ref{braid relation theorem}, this is independent of the chosen reduced expression for $w$. Hence $\Phi_w(P,P^w)$ is a well-defined isomorphism $\overline{T^*(\SL_n/[P,P])}^{\mathrm{aff}}\rightarrow\overline{T^*(\SL_n/[P^w,P^w])}^{\mathrm{aff}}$.

Finally, if $P$ and $P^w$ are not standard, choose conjugates which are standard, construct the corresponding isomorphism in the standard case, and then conjugate back.
\end{proof}
\noindent Note, in the proof of Theorem~\ref{classical reflection functors}, we may actually choose an arbitrary torsion scalar $\tau_i(P,P')$ and obtain \textit{an} isomorphism $\Phi_w$ for each reduced expression of $w$. But, these do not compose well.

We conclude this section by establishing that the reflection functors are also $\SL_n\times L^{\mathrm{ab}}$-equivariant.

\begin{prop}\label{WT equivariance}
Let $(P,P')$ be a pair of $i$-swapped parabolics. Then the reflection functor $\Phi_i:=\Phi_i(P,P')$ satisfies the following ``twisted'' $L^{\mathrm{ab}}$-equivariance relation:
\begin{equation}\label{L^ab equivariance}
\Phi_i([B]\cdot h)=\Phi_i([B])\cdot\sigma_i(h),\qquad \text{for all } h\in L^{\mathrm{ab}}.
\end{equation}
Here, if $h$ is represented by a block-scalar element $\operatorname{diag}(z_1,\dots,z_1,\dots,z_k,\dots,z_k)$, then $\sigma_i(h)$ is obtained by permuting the $i$th and $(i+1)$st blocks of $h$. In particular, $\sigma_i$ defines an isomorphism $L^{\mathrm{ab}}\simeq (L')^{\mathrm{ab}}$, and we interpret the $L^{\mathrm{ab}}$- and $(L')^{\mathrm{ab}}$-actions in \eqref{L^ab equivariance} as in Equation~\ref{L/[L,L] action on quiver}.
\end{prop}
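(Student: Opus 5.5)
The plan is to check the twisted equivariance first on the level of the zipper $Z_i$, and then pass to invariant functions and affinizations. By Definition~\ref{reflection functor on simples}, it suffices to treat the case $s_i\leq s_{i+1}$. For $s_i>s_{i+1}$, $\Phi_i(P,P')$ is the inverse of $\Phi_i(P',P)$, and the relation for $\Phi_i(P',P)$ with $\sigma_i$ (an involution on block labels) gives the relation for the inverse. We also use that $\operatorname{Spec}$ of invariant rings is functorial: if an automorphism of $Z_i$ commutes with the $G_{i,\mathbf v}$-action and intertwines $p,p'$ with given automorphisms of $\Lambda^{\mathrm{surj}}_{P,i}$ and $\Lambda^{\mathrm{inj}}_{P',i}$, then $\psi_i$ intertwines the induced maps on invariants. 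Further, these automorphisms extend to $m^{-1}_{\mathrm{inj}}(0)$ via the Hartogs identification of Corollary~\ref{Hartogs for injective locus}. Finally, by Proposition~\ref{DKS equivariance}, the $L^{\mathrm{ab}}$-action on $\overline{T^*(\SL_n/[P,P])}^{\mathrm{aff}}$ is the one from \eqref{L/[L,L] action on quiver}. So the statement reduces to a computation on quiver data.

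The key step is to use the scalar form \eqref{modified L/[L,L] action} of the action, in which $h$ rescales each arrow $\alpha_j$ by $c_{j+1}^{-1/d_{j+1}}c_j^{1/d_j}$ and each $\beta_j$ by the inverse factor, with $c_j=\det h_j=\prod_{a\le j}z_a^{s_a}$ and $c_k=1$. Define the action of $h$ on $Z_i$ by rescaling the arrows of $B$ by the $P$-scalars for $h$, and the arrows of $B'$ by the $P'$-scalars for $\sigma_i(h)$. Then check that this preserves (C1)--(C3) and the torsion.

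\textbf{(C1).} For $j\neq i$, the $P'$-constants $c'_j$ attached to $\sigma_i(h)$ agree with $c_j$, since $\sigma_i$ only swaps $z_i^{s_i}$ and $z_{i+1}^{s_{i+1}}$, and the partial products $c_j$ with $j\neq i$ are unchanged. Hence the arrows away from vertex $i$ are rescaled identically.

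\textbf{(C2) and (C3).} Write $\operatorname{out}_i$ and $\operatorname{in}_i$ as block matrices. On $\C^{d_{i-1}}\oplus\C^{d_{i+1}}$ the rescaling acts as conjugation by $D=\operatorname{diag}(c_{i-1}^{1/d_{i-1}},c_{i+1}^{1/d_{i+1}})$. The map $\operatorname{in}_i(B)$ becomes $c_i^{-1/d_i}\operatorname{in}_i(B)D$, and $\operatorname{out}_i(B')$ becomes $D^{-1}\operatorname{out}_i(B')c_i'^{1/d_i'}$, with the dual formulas for the other two maps. Exactness is then clearly preserved. In (C3) the products $\operatorname{out}\circ\operatorname{in}$ transform by the same conjugation by $D^{-1}$, and $\lambda_i(B)$ is invariant because $\mu_i$ is conjugated by scalars. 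So (C3) is preserved.

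\textbf{Torsion (main obstacle).} Under the rescaling the torsion changes by a determinant factor
\[
\det(D)^{-1}\,c_i'^{\,d_i'/d_i'}\,c_i^{\,d_i/d_i}
=\frac{c_i\,c_i'}{c_{i-1}\,c_{i+1}},
\]
up to sign conventions. I expect this to be the main obstacle. The factor equals $z_i^{s_i}z_{i+1}^{s_{i+1}}$-type ratios, which seem to cancel since $c_i=c_{i-1}z_i^{s_i}$, $c_i'=c_{i-1}z_{i+1}^{s_{i+1}}$, and $c_{i+1}=c_{i-1}z_i^{s_i}z_{i+1}^{s_{i+1}}$. This gives $c_ic_i'/(c_{i-1}c_{i+1})=1$, so $\tau_i$ is preserved. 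If a discrepancy by a root of unity appears from the choice of roots, I would absorb it into an element of $\SL_{d_i}$ or $\SL_{d_i'}$, or avoid it by working with the nonscalar form \eqref{L/[L,L] action on quiver}, where $h_i'=\operatorname{diag}$ with blocks reordered.

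With this in place, the rescaling is an automorphism of $Z_i$. It normalizes $G_{i,\mathbf v}$ (it commutes with it, being scalar) and is compatible with $p$ and $p'$. Passing to invariants and affinizations then yields \eqref{L^ab equivariance}.
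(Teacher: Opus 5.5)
Your proposal is correct and is essentially the paper's proof: you reduce to $s_i\le s_{i+1}$, act on $B$ by the scalar form of $h$ and on $B'$ by that of $\sigma_i(h)$, and check (C1)--(C3) on $Z_i$. Your $D$ and the factors $c_i^{\pm 1/d_i}$, $c_i'^{\pm 1/d_i'}$ simply repackage the paper's $S=c_i^{1/d_i}D^{-1}$ and $\rho=c_i'^{1/d_i'}c_i^{-1/d_i}$, and you reach the same factor $c_ic_i'/(c_{i-1}c_{i+1})=1$.

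Your hedging at the torsion step is unnecessary. That factor is exact, with no sign and no root-of-unity ambiguity, because $(c_i^{1/d_i})^{d_i}=c_i$, $(c_i'^{1/d_i'})^{d_i'}=c_i'$ and $\det D=c_{i-1}c_{i+1}$ for any choice of roots. This only requires that the same roots of $c_{i-1}$ and $c_{i+1}$ are used for $B$ and $B'$, which your use of a single $D$ on both sides already presupposes.
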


\begin{proof}
It suffices to prove the claim in the case $s_i\leq s_{i+1}$; the case $s_i>s_{i+1}$ follows by applying this case to the reversed pair $(P',P)$ and then taking inverses. We show that the correspondence $Z_i^{\tau_i}(P,P')$ is compatible with the torus actions. Thus, suppose $(B,B')\in Z_i^{\tau_i}(P,P')$. We will show that $(B\cdot h,B'\cdot\sigma_i(h))\in Z_i^{\tau_i}(P,P')$.

Choose a block-scalar representative $h=(z_1,\dots,z_1,\dots,z_k,\dots,z_k)$ for the $L^{\mathrm{ab}}$-action, and define $c_j:=z_1^{s_1}\cdots z_j^{s_j}$ and $q_j:=c_j^{1/d_j}c_{j+1}^{-1/d_{j+1}}$, with the convention $c_k=1$. Thus the scalar form of the action in Equation~\ref{modified L/[L,L] action} is $\alpha_j\mapsto q_j\alpha_j$ and $\beta_j\mapsto q_j^{-1}\beta_j$. Let $h':=\sigma_i(h)$, and let $c_j'$ and $q_j'$ be the corresponding scalars for the $P'$-side. Then $c_j'=c_j$ for $j\neq i$, while $c_i'=c_{i-1}z_{i+1}^{s_{i+1}}$. It follows that $q_j'=q_j$ for $j\notin\{i-1,i\}$ and $q_{i-1}q_i=q_{i-1}'q_i'$. Define $\rho:=q_i'/q_i=q_{i-1}/q_{i-1}'$.

Condition (C1) is now immediate, since all arrows away from the zipper are scaled by the same factors on both sides. For (C2) and (C3), set $T_i:=\C^{d_{i-1}}\oplus\C^{d_{i+1}}$ and let $S:T_i\rightarrow T_i$ be the scalar automorphism $S=q_{i-1}^{-1}\Id_{\C^{d_{i-1}}}\oplus q_i\Id_{\C^{d_{i+1}}}$. Then
\begin{equation}\label{torus action on in-out}
\operatorname{out}_i(B\cdot h)=S\operatorname{out}_i(B),\qquad
\operatorname{in}_i(B\cdot h)=\operatorname{in}_i(B)S^{-1},
\end{equation}
while
\begin{equation}\label{torus action on primed in-out}
\operatorname{out}_i(B'\cdot h')=\rho S\operatorname{out}_i(B'),\qquad
\operatorname{in}_i(B'\cdot h')=\rho^{-1}\operatorname{in}_i(B')S^{-1}.
\end{equation}
Since $\operatorname{in}_i(B)\operatorname{out}_i(B')=0$, Equations~\eqref{torus action on in-out} and~\eqref{torus action on primed in-out} imply that the transformed maps also compose to zero, and exactness is preserved because $S$ and $\rho$ are invertible.

It remains to check that the torsion scalar in (C2) is unchanged. Let $s:\C^{d_i}\rightarrow T_i$ be any splitting of $\operatorname{in}_i(B)$. Then $Ss$ is a splitting of $\operatorname{in}_i(B\cdot h)$. Hence the torsion expression is multiplied by $\rho^{d_i'}\det(S)$. Since $\det(S)=q_{i-1}^{-d_{i-1}}q_i^{d_{i+1}}$, a direct calculation using the definitions of the $q$'s gives
\begin{equation}\label{torsion scaling identity}
\rho^{d_i'}q_{i-1}^{-d_{i-1}}q_i^{d_{i+1}}=\frac{c_ic_i'}{c_{i-1}c_{i+1}}=1.
\end{equation}
Thus the torsion remains $\tau_i$, so condition (C2) is preserved.

Finally, condition (C3) is also preserved. Indeed, by Equations~\eqref{torus action on in-out} and~\eqref{torus action on primed in-out}, and using $\lambda_i(B\cdot h)=\lambda_i(B)$, we have
\begin{equation}\label{C3 torus equivariance calculation}
\begin{aligned}
\operatorname{out}_i(B'\cdot h')\operatorname{in}_i(B'\cdot h')
&=
S\operatorname{out}_i(B')\operatorname{in}_i(B')S^{-1}\\
&=
S\big(\operatorname{out}_i(B)\operatorname{in}_i(B)+\lambda_i(B)\Id_{T_i}\big)S^{-1}\\
&=
\operatorname{out}_i(B\cdot h)\operatorname{in}_i(B\cdot h)+\lambda_i(B\cdot h)\Id_{T_i}.
\end{aligned}
\end{equation}
Therefore condition (C3) is preserved. Hence $(B\cdot h,B'\cdot\sigma_i(h))\in Z_i^{\tau_i}(P,P')$, and the equivariance relation \eqref{L^ab equivariance} follows.
\end{proof}

\begin{prop}\label{SL_n equivariance}
The reflection functors $\Phi_i(P,P')$ are $\SL_n$-equivariant: for $g\in\SL_n$ and $[B]\in m_{\mathrm{inj}}^{-1}(0)//\SL_{\mathbf{v}}$,
\begin{equation}\label{SLn equivariance reflection functor}
\Phi_i(g\cdot[B])=g\cdot\Phi_i([B]),
\end{equation}
where the $\SL_n$-action is as in Definition~\ref{SL_n action on quiver}.
\end{prop}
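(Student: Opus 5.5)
The plan follows the proof of Proposition~\ref{WT equivariance}, and is simpler. As there, it suffices to treat $s_i\leq s_{i+1}$, where $\Phi_i$ is defined by the diagram in Definition~\ref{reflection functor on simples}. The case $s_i>s_{i+1}$ then follows by applying this case to the reversed pair $(P',P)$ and inverting, since the inverse of an equivariant isomorphism is equivariant. The action of Definition~\ref{SLn action quiver variety} is induced from the action on $T^*V$ given by $\alpha_{k-1}\mapsto g\alpha_{k-1}$, $\beta_{k-1}\mapsto\beta_{k-1}g^{-1}$, with all other arrows fixed. This action commutes with $\SL_{\mathbf v}$ and preserves $m^{-1}_{P,\mathrm{inj}}(0)$. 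It also preserves $\Lambda_{P,i}^{\mathrm{surj}}$, and likewise on the $P'$ side. Hence it is enough to show that $\SL_n$ acting diagonally on $\Lambda_{P,i}^{\mathrm{surj}}\times\Lambda_{P',i}^{\mathrm{inj}}$ preserves $Z_i$, and that the projections $p,p'$ are $\SL_n$-equivariant. The latter is automatic.

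Granting this, $p^*$ and $(p')^*$ intertwine the induced $\SL_n$-actions on the invariant rings. Hence $\psi_i=(p^*)^{-1}(p')^*$ is $\SL_n$-equivariant. The Hartogs identifications of Corollary~\ref{Hartogs for injective locus} are restrictions of functions along $\SL_n$-stable open subsets, so they are equivariant. The DKS identifications are equivariant by Proposition~\ref{DKS equivariance}. Passing to $\Spec$, equation~\eqref{SLn equivariance reflection functor} follows.

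To check that $Z_i$ is $\SL_n$-stable, take $(B,B')\in Z_i$ and consider $(g\cdot B,g\cdot B')$. Condition (C1) is clear, because both sides transform the common arrows identically. For (C2) and (C3) there are two cases.

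\textbf{Case $i<k-1$.} Neither $\alpha_{k-1}$ nor $\beta_{k-1}$ appears in $\operatorname{out}_i$, $\operatorname{in}_i$ or $\lambda_i$, unless $i=k-2$. Even when $i=k-2$, the maps $\alpha_{i-1},\beta_{i-1},\alpha_i,\beta_i$ are untouched. So (C2) and (C3) hold verbatim.

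\textbf{Case $i=k-1$.} This is the only real computation. Here $\C^{d_{i+1}}=\C^n$, and we set $S:=\Id_{\C^{d_{i-1}}}\oplus g$ on $T_i=\C^{d_{i-1}}\oplus\C^n$. Then
\[
\operatorname{out}_i(g\cdot B)=S\operatorname{out}_i(B),\qquad \operatorname{in}_i(g\cdot B)=\operatorname{in}_i(B)S^{-1},
\]
and the same formulas hold for $B'$, while $\lambda_i(g\cdot B)=\lambda_i(B)$. Exactness of \eqref{SES zipper} is preserved because $S$ is invertible. Condition (C3) is conjugated by $S$, exactly as in \eqref{C3 torus equivariance calculation}.

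For the torsion, if $s$ is a splitting of $\operatorname{in}_i(B)$, then $Ss$ splits $\operatorname{in}_i(g\cdot B)$. The wedge expression in (C2) is therefore multiplied by $\det S=\det g=1$, so $\tau_i$ is unchanged.

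The main point of care is this determinant computation. It is precisely where $\SL_n$ is needed rather than $\GL_n$: a $\GL_n$ element would rescale the torsion by its determinant. Apart from that, the argument only requires tracking that every identification in the defining diagram is equivariant.
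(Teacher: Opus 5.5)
Your proposal is correct and follows the paper's proof essentially step for step. You reduce to showing that $Z_i$ is stable under the diagonal $\SL_n$-action, note that for $i\leq k-2$ conditions (C2) and (C3) do not involve the last arrow pair, and for $i=k-1$ conjugate by $S=\Id\oplus g$, using $\det S=\det g=1$ to preserve the torsion.

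The hedge ``unless $i=k-2$'' is unnecessary, since for $i=k-2$ the maps $\operatorname{out}_i$, $\operatorname{in}_i$ and $\lambda_i$ involve only $\alpha_{k-3},\beta_{k-3},\alpha_{k-2},\beta_{k-2}$. Your added bookkeeping on the reduction to $s_i\leq s_{i+1}$ and on the equivariance of the Hartogs and DKS identifications just makes explicit what the paper leaves implicit.
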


\begin{proof}
It is enough to show that the correspondence $Z_i^{\tau_i}(P,P')$ is stable under the diagonal $\SL_n$-action. Suppose $(B,B')\in Z_i^{\tau_i}(P,P')$. We must show that $(g\cdot B,g\cdot B')\in Z_i^{\tau_i}(P,P')$.

If $1\leq i\leq k-2$, this is immediate: the $\SL_n$-action only changes the final arrow pair $(\alpha_{k-1},\beta_{k-1})$, while the conditions (C2) and (C3) defining the $i$th zipper involve only the arrows with indices $i-1$ and $i$. Condition (C1) is also preserved, since the final arrow pair is changed in the same way for $B$ and $B'$.

It remains to consider $i=k-1$. Set $T_{k-1}:=\C^{d_{k-2}}\oplus\C^n$ and let $S:T_{k-1}\rightarrow T_{k-1}$ be the automorphism $S=\Id_{\C^{d_{k-2}}}\oplus g$. Then
\begin{equation}\label{SLn action on last in-out}
\operatorname{out}_{k-1}(g\cdot B')=S\operatorname{out}_{k-1}(B'),\qquad
\operatorname{in}_{k-1}(g\cdot B)=\operatorname{in}_{k-1}(B)S^{-1}.
\end{equation}
Since $\operatorname{in}_{k-1}(B)\operatorname{out}_{k-1}(B')=0$, Equation~\eqref{SLn action on last in-out} implies $\operatorname{in}_{k-1}(g\cdot B)\operatorname{out}_{k-1}(g\cdot B')=0$. Exactness is preserved because $S$ is an isomorphism. The torsion is also preserved, since $\det(S)=\det(g)=1$.

It remains only to check (C3). Similarly to \eqref{SLn action on last in-out}, we have
\begin{equation}\label{SLn action on last unprimed and primed}
\operatorname{out}_{k-1}(g\cdot B)=S\operatorname{out}_{k-1}(B),\qquad
\operatorname{in}_{k-1}(g\cdot B')=\operatorname{in}_{k-1}(B')S^{-1}.
\end{equation}
Since $\lambda_{k-1}(g\cdot B)=\lambda_{k-1}(B)$, Equations~\eqref{SLn action on last in-out} and~\eqref{SLn action on last unprimed and primed} give
\begin{equation}\label{SLn C3 equivariance calculation}
\begin{aligned}
\operatorname{out}_{k-1}(g\cdot B')\operatorname{in}_{k-1}(g\cdot B')
&=S\operatorname{out}_{k-1}(B')\operatorname{in}_{k-1}(B')S^{-1}\\
&=S\big(\operatorname{out}_{k-1}(B)\operatorname{in}_{k-1}(B)+\lambda_{k-1}(B)\Id_{T_{k-1}}\big)S^{-1}\\
&=\operatorname{out}_{k-1}(g\cdot B)\operatorname{in}_{k-1}(g\cdot B)+\lambda_{k-1}(g\cdot B)\Id_{T_{k-1}}.
\end{aligned}
\end{equation}
Thus (C3) is preserved. Therefore $(g\cdot B,g\cdot B')\in Z_i^{\tau_i}(P,P')$, and Equation \eqref{SLn equivariance reflection functor} follows.
\end{proof}

\section{The Coxeter Relations}\label{BraidRelationSect}

The entirety of this section is devoted to proving the following theorem.
\begin{thm}\label{braid relation theorem}
Let $\sigma_i,\sigma_j,\sigma_r\in S_k$ denote the simple reflections and suppose $j=i+1$. Choose torsion scalars associated to each $i$- or $j$-swapped pair of parabolics as in Definition~\ref{torsion choice}. Then the reflection functors satisfy the $S_k$ Coxeter relations:
\begin{align}
\Phi_i(P,P^{\sigma_i})\Phi_i(P^{\sigma_i},P)
&=\mathrm{Id}
\label{quadratic relation}\\
\Phi_i(P,P^{\sigma_i})\Phi_r(P^{\sigma_i},P^{\sigma_r\sigma_i})
&=\Phi_i(P^{\sigma_r},P^{\sigma_i\sigma_r})\Phi_r(P,P^{\sigma_r}),
\qquad \text{if } |i-r|>1.
\label{easy braid}\\
\Phi_i(P^{\sigma_j\sigma_i},P^{\sigma_i\sigma_j\sigma_i})\Phi_j(P^{\sigma_i},P^{\sigma_j\sigma_i})\Phi_i(P,P^{\sigma_i})
&=
\Phi_j(P^{\sigma_i\sigma_j},P^{\sigma_j\sigma_i\sigma_j})\Phi_i(P^{\sigma_j},P^{\sigma_i\sigma_j})\Phi_j(P,P^{\sigma_j}),
\label{braid relation}
\end{align}
\end{thm}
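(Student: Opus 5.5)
The plan is to reduce every relation to a statement about correspondences on a common dense open subset, on which all the quotients are honest geometric quotients and the zippers are torsors. Each $\Phi_i$ is determined by its restriction to such an open set, since the target is affine and the source is the affinization of an open set. It therefore suffices to show that two composites agree as rational maps on a dense open subset of $m_{P,\mathrm{inj}}^{-1}(0)/\SL_{\mathbf v}$.

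Concretely, let $U_P\subset m^{-1}_{P,\mathrm{inj}}(0)$ be the locus where all $\lambda_j\neq 0$. Along the proof of the codimension proposition, $\operatorname{in}_j(B)$ is surjective for every $j$ on $U_P$. The zipper $Z_j$ restricted over $U_P$ should land in $U_{P'}$, because (C3) shifts the $\lambda$'s by the standard Weyl reflection rule: $\lambda_i'=-\lambda_i$, $\lambda_{i\pm1}'=\lambda_{i\pm1}+\lambda_i$. Consequently all composites in \eqref{quadratic relation}--\eqref{braid relation} are defined on $U_P/\SL_{\mathbf v}$ by composing the torsor correspondences.

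On this locus I would then imitate the Lusztig/Maffei/Nakajima proofs. In the $\GL$-setting these give equality of the composite correspondences up to the $\GL_{\mathbf v}$-gauge. Lifting to $\SL$, two composites differ by the action of a central scalar element of $\prod\GL_{d_l}$ at the affected vertices, and this scalar is built from the torsions.

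The relations are handled as follows.

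\textbf{Relation \eqref{quadratic relation}.} Composing $Z_i(P,P')$ with $Z_i(P',P)$ recovers the original $\operatorname{in}_i,\operatorname{out}_i$ up to a $\GL_{d_i}$-element. By exactness of \eqref{SES zipper} and multiplicativity of torsion, its determinant is $\tau_i(P,P')\tau_i(P',P)$ up to the sign $(-1)^{d_id_i'}$ that arises from reordering exterior factors. So the relation holds exactly when that product equals the appropriate sign.

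\textbf{Relation \eqref{easy braid}.} For $|i-r|>1$ the zippers involve disjoint arrows, except possibly when $|i-r|=2$, where a shared vertex appears only as an index (not as a modified vertex). The two correspondences therefore commute on the nose, provided the torsion scalars $\tau_i$ do not depend on the block sizes changed by $\sigma_r$, or at least that the two products agree.

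\textbf{Relation \eqref{braid relation}.} This is the main obstacle. Here I would follow Maffei's proof, which uses the long exact/octahedral comparison of the complexes at vertices $i,i+1$ with $\lambda$'s generic. After both sides are composed, each vertex space $\C^{d_i''}$, $\C^{d_{i+1}''}$ is identified with a kernel or cokernel, and the two identifications differ by an explicit element of $\GL_{d_i''}\times\GL_{d_{i+1}''}$. Computing its determinants via Turaev's multiplicativity of torsion for short exact sequences of complexes gives signs of the form $(-1)^{d_ad_b}$ times ratios of the chosen $\tau$'s. This produces two equations in the torsion scalars, namely \eqref{First torsion equation} and \eqref{Second torsion equation}, together with the quadratic condition. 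The proof then concludes by invoking Definition~\ref{torsion choice} and Proposition~\ref{torsion choice satisfies braid signs}, which assert that the chosen signed torsions solve these equations.

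Finally, I would pass from equality on the dense open $U_P$ to equality on the affinizations, using that the regular functions of the affinization restrict injectively to $U_P$.
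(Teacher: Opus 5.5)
Your skeleton is the paper's: a generic locus where the zippers are torsors; the braid relation via Maffei's hexagon and Wang's torsion lemma, reduced to the sign identities \eqref{First torsion equation} and \eqref{Second torsion equation}; then Definition~\ref{torsion choice}. But the far-commutation step has a genuine gap, and it is exactly the proviso you left unchecked.

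\textbf{Far commutation fails for $|i-r|=2$.} With Definition~\ref{torsion choice}, $\ell_i(P)$ involves $s_{i+2}$, which $\sigma_{i+2}$ moves. One computes $\ell_i(P)+\ell_i(P^{\sigma_{i+2}})\equiv(s_{i+2}+s_{i+3})\,d_{i+1}\pmod 2$, whereas $\ell_{i+2}$ is unchanged by $\sigma_i$.
Take $P=P_{1,2,1,2}\subset\SL_6$, $i=1$, $r=3$. All four functors involved are of the direct type $s_i\le s_{i+1}$, so no inverses intervene, and one gets $\ell_1(P)=1$ but $\ell_1(P^{\sigma_3})=0$.
The zippers at vertices $1$ and $3$ use disjoint arrows. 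So the two composites differ exactly by an element of $\GL_{d_1'}$ of determinant $-1$ acting at vertex $1$.
Since $\GL_{\mathbf{v}'}$ acts freely on $m^{-1}_{\mathrm{inj}}(0)$ (descending induction from the framing, all $\alpha$'s being injective), this element is not absorbed by $\SL_{\mathbf{v}'}$. Concretely, the invariant $\wedge^{2}(\alpha_3'\alpha_2'\alpha_1')$ changes sign.
Hence, with the stated convention, the commutation relation for $|i-r|=2$ is false. No argument along your lines can prove it; the paper's one-line ``immediate'' justification overlooks the same dependence.
To repair this you need a torsion convention that is invariant under $\sigma_r$ for $|r-i|\ge 2$ and still solves both sign identities. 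For instance, $\ell_i(P)=e_2(\bar s_1,\dots,\bar s_{i-1})+d_{i-1}s_i+i\,s_{i+1}$ works, with $\bar s_m$ the parity of $s_m$. One checks this using $e_2(\bar s_1,\dots,\bar s_{i-1},x)=e_2(\bar s_1,\dots,\bar s_{i-1})+x\,d_{i-1}$.

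\textbf{Quadratic relation.} You should use that Definition~\ref{reflection functor on simples} makes it a tautology when $s_i\neq s_{i+1}$. Your determinant claim is in fact false in that case. For $(B,B')\in Z_i$, the reversed sequence has torsion $(-1)^{d_id_i'+d_i}\lambda_i^{d_i-d_i'}\tau_i$. This discrepancy depends on $\lambda_i$, so no constant torsions could cancel it; that is why the paper defines the backward functor as an inverse.
When $s_i=s_{i+1}$, so $d_i=d_i'=d$, the correct sign is $(-1)^{d^2+d}=1$, not $(-1)^{d^2}$. The extra $(-1)^d$ comes from the lift $-\lambda_i^{-1}\operatorname{out}_i(B)$ of $\operatorname{in}_i(B)$. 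So the relation holds for any torsion choice.

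\textbf{The open set.} Your locus $U_P$, where all $\lambda_j\neq 0$, is not preserved by the reflections, because $\lambda'_{i\pm1}=\lambda_{i\pm1}+\lambda_i$ can vanish. Use instead the complement of all hyperplanes $\lambda_a+\dots+\lambda_b=0$, which the reflection rule permutes up to sign. The paper uses $\mathcal M_{ij}$ for the braid relation.
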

\noindent We formulated the braid relation \eqref{braid relation} so that no inverse reflection functors appear. We may indeed arrange this: if the $i^{th}$, $(i+1)^{st}$, and $(i+2)^{nd}$ block sizes of $P$ are $s_i,s_{i+1},s_{i+2}$ with $s_i\leq s_{i+1}\leq s_{i+2}$, then both words $\sigma_i\sigma_j\sigma_i$ and $\sigma_j\sigma_i\sigma_j$ move a smaller block one step to the right at each stage.

\begin{lemma}
The quadratic relation  $\Phi_i(P,P^{\sigma_i})\Phi_i(P^{\sigma_i},P)=\mathrm{Id}$ holds.
\end{lemma}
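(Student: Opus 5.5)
The plan is to reduce the quadratic relation to Definition~\ref{reflection functor on simples}. Write $P':=P^{\sigma_i}$, so that $(P,P')$ and $(P',P)$ are both $i$-swapped pairs. Exactly one of the two cases holds: $s_i\leq s_{i+1}$ for $P$, or $s_i'=s_{i+1}<s_{i+1}'=s_i$ for $P'$. Here I am treating the boundary case $s_i=s_{i+1}$ separately; in that case $P=P'$.

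\textbf{Generic case, $s_i\neq s_{i+1}$.} Exactly one of the two functors is defined through the zipper correspondence, and the other is defined as its inverse. Suppose first that $s_i<s_{i+1}$. Then $\Phi_i(P',P)$ is, by definition, $(\Phi_i(P,P'))^{-1}$. Therefore
\[
\Phi_i(P,P')\Phi_i(P',P)=\Phi_i(P,P')\Phi_i(P,P')^{-1}=\mathrm{Id}.
\]
If instead $s_i>s_{i+1}$, the roles of $P$ and $P'$ are reversed, and the same identity holds. The one point to check is that both functors use the same torsion scalar, namely $\tau_i(P',P)=\tau_i(P,P')$. The inverse definition literally uses the zipper $Z_i^{\tau_i}$ of the reversed pair, so I would confirm that Definition~\ref{torsion choice} assigns a single scalar to the unordered pair $\{P,P'\}$, or else simply adopt that convention.

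\textbf{Boundary case, $s_i=s_{i+1}$.} Here $d_i'=d_i$ and $P'=P$, and both factors are the zipper-defined map $\Phi_i^{\tau}(P,P)$. So I must show that this map is an involution. This is the main step. I would follow Maffei's argument that $\mathrm{Spec}(\psi_i)$ composed with itself is the identity. Start with $(B,B')\in Z_i$, where $B$ lies in the locus on which both $\Lambda^{\mathrm{surj}}$ and $\Lambda^{\mathrm{inj}}$ hold. I claim that the reversed pair $(B',B)$ also lies in $Z_i$, after possibly rescaling by a sign.

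To check this, use the identities $\operatorname{in}_i(B)\operatorname{out}_i(B')=0$ and $\operatorname{out}_i(B')\operatorname{in}_i(B')=\operatorname{out}_i(B)\operatorname{in}_i(B)+\lambda\Id$.
\begin{itemize}
\item[(1)] These give $\operatorname{in}_i(B')\operatorname{out}_i(B)=0$ up to composition with injective and surjective maps.
\item[(2)] They also give $\lambda_i(B')=-\lambda_i(B)$.
\item[(3)] Hence (C3) holds for the reversed pair.
\item[(4)] Exactness of the reversed sequence follows from dimension counting, provided $\operatorname{in}_i(B')$ is surjective.
\end{itemize}

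The torsion of the reversed sequence is related to $\tau$ by the standard duality formula for torsion of a short exact sequence. I expect this to give $\pm\tau^{\pm1}$, with a sign of the form $(-1)^{d_i d_i'}$ coming from reordering wedge factors and from the minus sign on $\alpha_{i-1}$. If so, I would adjust the reversed pair by $\SL$-gauge or scalar elements, and check that the torsion choice of Definition~\ref{torsion choice} makes the two match.

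\textbf{Extending to affinizations.} Once the involution holds pointwise on the dense open locus where both $\Lambda^{\mathrm{surj}}$ and $\Lambda^{\mathrm{inj}}$ hold, the equality extends to the affinizations. This uses the Hartogs identification of Corollary~\ref{Hartogs for injective locus}: two ring maps on $\C[m^{-1}_{\mathrm{inj}}(0)]^{\SL_{\mathbf v}}$ that agree on a dense open subset agree everywhere.

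I expect the torsion-sign bookkeeping in the boundary case to be the only genuine obstacle.
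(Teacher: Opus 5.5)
Your overall structure is the paper's. For $s_i\neq s_{i+1}$ the relation is immediate from Definition~\ref{reflection functor on simples}. For $s_i=s_{i+1}$ (so $P^{\sigma_i}=P$) one shows that $(B,B')\in Z_i$ implies $(B',B)\in Z_i$, and (C1), (C3) and exactness go essentially as you indicate.

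\textbf{The gap.} The only step with real content, the torsion of the reversed sequence, is left as an expectation (``$\pm\tau^{\pm1}$''), and the fallback you propose cannot work.
\begin{itemize}
\item[(i)] Gauge changes by $\SL_{\mathbf v}$ or $G_{i,\mathbf v}$ never change the torsion, since all determinants are $1$.
\item[(ii)] A sign or scalar rescaling of $B$ or $B'$ outside the gauge group moves the point in the $\SL_{\mathbf v}$-quotient. Then $\Phi_i^2$ would be a nontrivial rescaling map rather than $\mathrm{Id}$.
\item[(iii)] Since $P^{\sigma_i}=P$, the same scalar $\tau=\tau_i(P,P)$ is imposed on both $(B,B')$ and $(B',B)$. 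So no choice in Definition~\ref{torsion choice} can absorb a sign discrepancy.
\end{itemize}
There is also no ``duality formula'' to invoke: $0\to\C^{d}\xrightarrow{a}T\xrightarrow{b'}\C^{d}\to0$ is not the dual of $0\to\C^{d}\xrightarrow{a'}T\xrightarrow{b}\C^{d}\to0$. The lemma holds because the sign is exactly $+1$, so this has to be computed.

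\textbf{The missing idea.} The moment map gives explicit splittings. Write $a,b,a',b'$ for $\operatorname{out}_i(B),\operatorname{in}_i(B),\operatorname{out}_i(B'),\operatorname{in}_i(B')$, with $d=d_i=d_i'$ and $T=\C^{d_{i-1}}\oplus\C^{d_{i+1}}$. Restrict to the dense open set where $\lambda:=\lambda_i(B)\neq0$.
\begin{itemize}
\item[(1)] The moment map gives $ba=-\lambda\Id$.
\item[(2)] Multiply (C3) on the right by $a'$. Using $ba'=0$ and injectivity of $a'$, this gives $b'a'=\lambda\Id$.
\item[(3)] Similarly $a'b'a=aba+\lambda a=0$, hence $b'a=0$.
\end{itemize}
So $b'$ is surjective and the reversed sequence is exact. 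Moreover $-a/\lambda$ splits $b$, and $a'/\lambda$ splits $b'$.

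Take a basis $x_1,\dots,x_d$ of unit volume. The hypothesis reads $(-\lambda)^{-d}\bigwedge_\alpha a'(x_\alpha)\wedge\bigwedge_\alpha a(x_\alpha)=\tau\,\mathrm{vol}_T$. The reversed torsion is $\lambda^{-d}\bigwedge_\alpha a(x_\alpha)\wedge\bigwedge_\alpha a'(x_\alpha)=(-1)^{d^2}(-1)^{d}\tau\,\mathrm{vol}_T=\tau\,\mathrm{vol}_T$.

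Thus $(B',B)\in Z_i$ exactly, for every $\tau$, with no adjustment. Density of $\{\lambda_i\neq0\}$ then gives $\Phi_i^2=\mathrm{Id}$. This is precisely the paper's (Wang's) computation.

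\textbf{The generic case.} Your torsion check there is unnecessary. When $s_i<s_{i+1}$, $\Phi_i(P^{\sigma_i},P)$ is by definition the inverse of the zipper map $\Phi_i(P,P^{\sigma_i})$, so $\tau_i(P^{\sigma_i},P)$ never enters.

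Do not adopt a symmetric convention either. Definition~\ref{torsion choice} is not symmetric: $\ell_i(P)+\ell_i(P^{\sigma_i})\equiv(d_{i-1}+i)(s_i+s_{i+1})$. For example, blocks $(1,2)$ and $(2,1)$ in $\SL_3$ give $\tau=-1$ and $+1$. Altering the convention would disturb the signs needed for the braid relation.
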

\begin{proof}
    Recall that by definition, when $s_i>s_{i+1}$, we defined $\Phi_i(P,P^{\sigma_i}):=\Phi_i(P^{\sigma_i},P)^{-1}$. Thus the only non-trivial case is when $s_i=s_{i+1}$. This case then follows from Wang's proof, which we recall for the reader's convenience. Using the notation as in Definition \ref{zipper}, suppose $(B,B')\in Z_i^{\tau_i}(P,P^{\sigma_i})$. We wish to show $(B',B)\in Z_i^{\tau_i}(P^{\sigma_i},P)$. Conditions (C1) and (C3) trivially hold.
    
    Let us prove condition $(C2)$ holds for $(B',B)$. For simplicity, denote $a=\text{out}_i(B), a'=\text{out}_i(B'),b=\text{in}_i(B),b'=\text{in}_i(B')$. Also let $d_i=d_i'=d$. 
\noindent To check two morphisms are equal, it suffices to check on the dense subset (See Equation \ref{dense open Mi}) where $\lambda_i\neq0$. The moment map identities
\begin{equation*}
    ba=-\lambda_i\text{Id}\quad b'a'=\lambda_i\text{Id}
\end{equation*}
imply exactness for the sequence associated with $(B',B)$ \cite[Remark 36]{Maffei2002RemarkQuiverWeyl}. It remains to check torsion is preserved. For a chosen basis $\{x_1,\dots, x_{d}\}$ of $\C^{d}$ comptible with a fixed volume form, we have
\begin{equation*}
    b(a(-x_\alpha/\lambda_i))=x_\alpha,\quad b'(a'(x_\alpha/\lambda_i))=x_\alpha
\end{equation*}
\noindent Thus the torsion condition for the pair $(B,B')$ implies
\begin{equation*}
\bigwedge_{a=1}^{d}a'(x_\alpha)\wedge\bigwedge_{a=1}^da(-x_\alpha/\lambda_i)=\tau_i\text{vol}_{\C^{d_{i-1}+d_{i+1}}}.
\end{equation*}
\noindent For the sequence attached to the pair $(B',B)$, the torsion is $\bigwedge_{a=1}^{d}a(x_\alpha)\wedge\bigwedge_{a=1}^{d}a'(x_\alpha/\lambda_i)$ Thus we see this torsion is $(-1)^{d^2+d}\tau_i=\tau_i$ since $d^2+d$ is even. This completes the proof. \footnote{Note that we did not need to specialize $\tau_i$ to be as in Definition \ref{torsion choice}, and the torsion choice in Definition \ref{torsion choice} is symmetric about $(P,P')$ when $s_i=s_{i+1}$.}
\end{proof}

The relation \eqref{easy braid} is immediate, since the construction of the zipper at vertex $i$ does not affect the construction of the zipper at vertex $r$ when $|i-r|>1$.

Verifying the braid relation \eqref{braid relation} is quite technical; we first give a sketch of our strategy. In the case where all blocks of the parabolic have the same size, Wang verified the braid relations by running Maffei's argument while keeping careful track of the torsion of the short exact sequence in condition (C2) of Definition~\ref{zipper}. Central to Maffei's argument is the existence of a unique pair of maps which ``complete'' the hexagon diagram \eqref{braid-hexagon}. The crux of Wang's argument is establishing that the short exact sequence in (C2) associated to this pair has torsion $1$. This is done by keeping track of the torsion in the two linear algebra lemmas of \cite[Lemmas~40 and~41]{Maffei2002RemarkQuiverWeyl}.

Unfortunately, as Equations~\eqref{First torsion equation} and~\eqref{Second torsion equation} will show, in the general case where the parabolic $P$ has blocks of different sizes, the short exact sequence associated to Maffei's pair of maps need not have torsion $1$. To fix this, we impose in the definition of the reflection functor that the short exact sequence in (C2) has torsion $\tau_i(P,P')=(-1)^{\ell_i(P)}$, where $\ell_i(P)\in\Z/2\Z$ is determined by the block sizes of $P$ and the parity of $i$. Our choice of $\ell_i(P)$ is made in order to satisfy Equations~\eqref{First torsion equation} and~\eqref{Second torsion equation}. This choice is not unique, but it has the property that $\ell_i(P)=0$ if all blocks of $P$ have the same size. In this case, our proof is identical to Wang's.

We now introduce our notation. Since morphisms which agree on a dense open subset are equal, it suffices to prove Equation~\eqref{braid relation} on suitable dense open subsets. We choose these as follows. Given $B\in m_{P,\mathrm{inj}}^{-1}(0)$, let $\lambda_i(B)\in\C$ be the scalar appearing in the $i$th component of the moment map, as in Definition~\ref{zipper}. Let $\pi:m_{P,\mathrm{inj}}^{-1}(0)\rightarrow m_{P,\mathrm{inj}}^{-1}(0)//\SL_{\mathbf{v}}$ denote the natural projection. Define
\begin{equation}\label{dense open Mi}
\mathcal{M}_i:=\{\pi(B): B\in m_{P,\mathrm{inj}}^{-1}(0)\text{ and }\lambda_i(B)\neq0\},
\end{equation}
\begin{equation}\label{dense open Mij}
\mathcal{M}_{ij}:=\{\pi(B): B\in m_{P,\mathrm{inj}}^{-1}(0)\text{ and }\lambda_i(B)\neq0,\ \lambda_j(B)\neq0,\ \lambda_i(B)+\lambda_j(B)\neq0\}.
\end{equation}
Since the functions $\lambda_i$ and $\lambda_j$ are $\SL_{\mathbf{v}}$-invariant, they descend to regular functions on $m_{P,\mathrm{inj}}^{-1}(0)//\SL_{\mathbf{v}}$. The sets $\mathcal{M}_i$ and $\mathcal{M}_{ij}$ are the corresponding principal open subsets. Similarly, define $\mathcal{M}_i^{\sigma}$ and $\mathcal{M}_{ij}^{\sigma}$ using $m_{P^\sigma,\mathrm{inj}}^{-1}(0)$, where $P^\sigma$ is the standard parabolic whose Levi subgroup is $L^\sigma$, for $\sigma\in S_k$. Denote by $\Omega_i$ and $\Omega_{ij}$ the preimages of $\mathcal{M}_i$ and $\mathcal{M}_{ij}$ inside $m_{P,\mathrm{inj}}^{-1}(0)$ under $\pi$. Similarly, define $\Omega_i^{\sigma}$ and $\Omega_{ij}^{\sigma}$.

Observe that, because $\lambda_i(B)\neq0$ implies $\operatorname{in}_i(B)$ is surjective, the open subset $\Omega_i$ lies in $\Lambda_{P,i}^{\mathrm{surj}}$, and the same holds after replacing $P$ by any $P^\sigma$. Hence the projections $p:Z_i\rightarrow \Lambda_{P,i}^{\mathrm{surj}}$ and $p':Z_i\rightarrow \Lambda_{P',i}^{\mathrm{inj}}$ restrict over the open subsets defined above, and remain principal $\SL_{d_i'}$- and $\SL_{d_i}$-bundles, respectively. Thus the induced geometric reflection map, which we continue to denote by $\psi_i$, restricts to isomorphisms $\psi_i:\mathcal{M}_i\rightarrow\mathcal{M}_i'$ and $\psi_i:\mathcal{M}_{ij}\rightarrow\mathcal{M}_{ij}'$, where the primes refer to the corresponding open subsets for $P'$.

Next, we define $Z_{iji}$ and $Z_{jij}$ as in Maffei:

\begin{defn}
\begin{align*}
Z_{iji}:=\{(s,s''')&\in \Omega_{ij}\times\Omega_{ij}^{\sigma_i\sigma_j\sigma_i}: \exists s'\in m_{P^{\sigma_i},\mathrm{inj}}^{-1}(0) \text{ and } \exists s''\in m_{P^{\sigma_j\sigma_i},\mathrm{inj}}^{-1}(0) \text{ such that }\\
&(s,s')\in Z_i^{\tau_i}(P,P^{\sigma_i}),\quad (s',s'')\in Z_j^{\tau_j}(P^{\sigma_i},P^{\sigma_j\sigma_i}),\quad (s'',s''')\in Z_i^{\tau_i'}(P^{\sigma_j\sigma_i},P^{\sigma_i\sigma_j\sigma_i})\}.
\end{align*}
\end{defn}

\noindent We define $Z_{jij}$ by interchanging the roles of $i$ and $j$. We have suppressed the dependence of the torsion coefficients on the corresponding pairs of parabolics, but we will keep track of it below. Braid relation~\eqref{braid relation} is equivalent, on the dense open subset under consideration, to showing $Z_{iji}=Z_{jij}$.

Suppose $P$ is the standard parabolic associated to the quiver with dimension vector $\mathbf{v}=(d_1,\dots,d_{k-1})$. Introduce the notation
\[
\mathcal{M}(d_{i-1},d_i,d_j,d_{j+1}):=\operatorname{Spec}\big(\C[m_{P,\mathrm{inj}}^{-1}(0)]^{\SL_{\mathbf{v}}}\big).
\]
In this notation, we record only the four vertices involved in applying the reflection functors $\Phi_i$ and $\Phi_j$. Next, we write $\sigma_i$ in place of the reflection functor $\Phi_i(P,P')$. Consider the following diagram:
\begin{equation}\label{braid-hexagon}
\begin{tikzcd}
& \mathcal M(d_{i-1},d_i,d_j,d_{j+1}) \arrow[ld,"\sigma_i"'] \arrow[rd,"\sigma_j"] & \\
\mathcal M(d_{i-1},d_i',d_j,d_{j+1}) \arrow[d,"\sigma_j"'] &&
\mathcal M(d_{i-1},d_i,\widetilde d_j,d_{j+1}) \arrow[d,"\sigma_i"]\\
\mathcal M(d_{i-1},d_i',d_j',d_{j+1}) \arrow[rd,"\sigma_i"'] &&
\mathcal M(d_{i-1},\widetilde d_i,\widetilde d_j,d_{j+1}) \arrow[ld,"\sigma_j"]\\
& \mathcal M(d_{i-1},\widetilde d_i,d_j',d_{j+1}) &
\end{tikzcd}
\end{equation}
The reflection $\sigma_i$ changes $d_i$, while $\sigma_j$ changes $d_j$. Thus
\begin{align*}
    d_i' &= d_{i-1}+d_j-d_i,
    &
    d_j' &= d_i'+d_{j+1}-d_j
          = d_{i-1}+d_{j+1}-d_i, \\
    \widetilde d_j &= d_i+d_{j+1}-d_j,
    &
    \widetilde d_i &= d_{i-1}+\widetilde d_j-d_i
                   = d_{i-1}+d_{j+1}-d_j.
\end{align*}
A priori, the bottom vertex $\mathcal M(d_{i-1},\widetilde d_i,d_j',d_{j+1})$ should involve double primes and double tildes. However, these can be omitted because
\begin{align*}
d_i''&=d_{i-1}+d_j'-d_i'
      =d_{i-1}+d_{j+1}-d_j
      =\widetilde d_i,\\
\widetilde{\widetilde d}_j&=
\widetilde d_i+d_{j+1}-\widetilde d_j
=d_j'.
\end{align*}

In what follows, since we only need to verify an equality of torsions for maps defined by Maffei, we suppress the maps from the notation and refer the reader to Maffei for their definition. For the reader's convenience, we also recall the following key computation of torsion:

\begin{lemma}[{\cite[Lemma~3.9.2 and Corollary~3.9.3]{Wang2021GelfandGraev}}]\label{torsion of maffei SES}
Suppose
\begin{align}
    0\rightarrow V\rightarrow W\oplus U\rightarrow Y\rightarrow 0,\\
    0\rightarrow U\rightarrow X\oplus Y\rightarrow Z\rightarrow 0
\end{align}
are short exact sequences of vector spaces, with maps as in Maffei's Lemmas~40 and~41 \cite{Maffei2002RemarkQuiverWeyl}, and with torsions $\tau(V,Y)$ and $\tau(U,Z)$, respectively. Then the induced sequence
\begin{equation}
    0\rightarrow V\rightarrow W\oplus X\oplus Y\rightarrow Y\oplus Z\rightarrow 0
\end{equation}
is short exact and has torsion $\tau(V,Y)\tau(U,Z)$. Equivalently, after cancelling the common $Y$-term, the sequence
\begin{equation}
    0\rightarrow V\rightarrow W\oplus X\rightarrow Z\rightarrow 0
\end{equation}
is short exact and has torsion $(-1)^{d_Yd_Z+d_Y}\tau(V,Y)\tau(U,Z)$, where $d_Y=\dim Y$ and $d_Z=\dim Z$.
\end{lemma}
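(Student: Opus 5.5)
Fix bases (equivalently, volume forms) on each space, compatible with the fixed volume forms, and compute both torsions by direct exterior-algebra manipulation. Following Maffei's Lemmas~40 and~41, the first sequence $0\to V\to W\oplus U\to Y\to 0$ and the second $0\to U\to X\oplus Y\to Z\to 0$ splice as follows: the map $V\to W\oplus U$ followed by the inclusion $U\to X\oplus Y$ gives $V\to W\oplus X\oplus Y$. The map $W\oplus X\oplus Y\to Y\oplus Z$ is assembled from $W\oplus U\to Y$ and $X\oplus Y\to Z$, so that its kernel is exactly the image of $V$.

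\emph{Exactness.} This is Maffei's lemma; I would recall it rather than reprove it. Injectivity on $V$ is inherited from the first sequence. Surjectivity onto $Y\oplus Z$, and the kernel computation, follow by a diagram chase using the two given short exact sequences.

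\emph{The torsion $\tau(V,Y)\tau(U,Z)$.} I would compute it in three steps.
\begin{enumerate}
\item Choose a lift $\tilde z$ of $\mathrm{vol}_Z$ to $X\oplus Y$, so that $\mathrm{vol}_U$ pushed forward wedged with $\tilde z$ equals $\tau(U,Z)\,\mathrm{vol}_X\wedge\mathrm{vol}_Y$.
\item Choose a lift $\tilde y$ of $\mathrm{vol}_Y$ to $W\oplus U$, so that the image of $\mathrm{vol}_V$ wedged with $\tilde y$ equals $\tau(V,Y)\,\mathrm{vol}_W\wedge\mathrm{vol}_U$.
\item In $\bigwedge^{\mathrm{top}}(W\oplus X\oplus Y)$, take the image of $\mathrm{vol}_V$, wedged with a lift of $\mathrm{vol}_Y\wedge\mathrm{vol}_Z$; this lift is built from $\tilde y$ (pushed into $W\oplus X\oplus Y$ via $U\hookrightarrow X\oplus Y$) and from $\tilde z$.
\end{enumerate}
Substituting the identity from step~2 replaces $\mathrm{vol}_V\wedge\tilde y$ by $\tau(V,Y)\,\mathrm{vol}_W\wedge\mathrm{vol}_U$. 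Substituting the identity from step~1 then turns $\mathrm{vol}_U\wedge\tilde z$ into $\tau(U,Z)\,\mathrm{vol}_X\wedge\mathrm{vol}_Y$. Correction terms coming from the components of the lifts in the other summands vanish, because they are wedged against a full top form already present. This gives the product $\tau(V,Y)\tau(U,Z)$ with no sign, because the factors appear in the order $W,X,Y$ without reordering. I expect this step to be the main obstacle: one must check carefully that the lifts of $\mathrm{vol}_Y$ and $\mathrm{vol}_Z$ combine into a lift of $\mathrm{vol}_{Y\oplus Z}$ in the right order, and that off-diagonal components genuinely drop out by degree reasons.

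\emph{Cancelling the common $Y$.} The sequence $0\to V\to W\oplus X\to Z\to 0$ is obtained by projecting away the $Y$ summand; exactness follows since $Y\to Y$ is an isomorphism in the composed map. For the torsion, compare the two top-form expressions. In the big sequence the target volume is $\mathrm{vol}_W\wedge\mathrm{vol}_X\wedge\mathrm{vol}_Y$ and the lifted cokernel volume is ordered $\mathrm{vol}_Y\wedge\mathrm{vol}_Z$. Removing $\mathrm{vol}_Y$ from both sides requires two moves.
\begin{enumerate}
\item Commute the lift of $\mathrm{vol}_Y$ past the lift of $\mathrm{vol}_Z$. This contributes $(-1)^{d_Yd_Z}$.
\item Match the position of $\mathrm{vol}_Y$ at the end of the ambient volume. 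This contributes an additional $(-1)^{d_Y}$, arising from the sign convention of the Maffei map on the $Y$-component; that component is $-\Id$, analogous to the $-\alpha_{i-1}$ in $\operatorname{in}_i$.
\end{enumerate}
Together these give $(-1)^{d_Yd_Z+d_Y}\tau(V,Y)\tau(U,Z)$.
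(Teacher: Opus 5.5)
The paper gives no proof of this lemma; it imports it from Wang's Lemma~3.9.2 and Corollary~3.9.3. Your direct exterior-algebra computation is therefore a self-contained alternative rather than a reconstruction, and it is correct in substance. What it buys over the citation is that the sign conventions become visible, so you should write the maps down.

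Write the two sequences as $V\xrightarrow{\binom{a}{b}}W\oplus U\xrightarrow{(c\ d)}Y$ and $U\xrightarrow{\iota_2=\binom{e}{f}}X\oplus Y\xrightarrow{(g\ h)}Z$. The reading under which the stated formula holds is $d=-f$. The spliced maps are then $\iota\colon v\mapsto(av,ebv,fbv)$ and $\pi\colon(w,x,y)\mapsto(cw-y,\,gx+hy)$. The cancelled maps are $v\mapsto(av,ebv)$ and $(w,x)\mapsto gx+hcw$.

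\textbf{The spliced sequence.} The identity $\pi\circ(\Id_W\oplus\iota_2)=\big((c\ d),0\big)$ is exactly why the pushed-forward $\tilde y$ lifts $+\mathrm{vol}_Y$. This is why your step~3 gives $\tau(V,Y)\tau(U,Z)$ with no sign. The stray $Y$-components of $\pi(\tilde z)$ are killed by $\mathrm{vol}_Y$, so $\tilde z$ needs no correction.

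\textbf{The cancellation.} Here you must switch lifts, which is legitimate because torsion does not depend on the chosen lift. Take a section $\sigma'$ of the cancelled map $W\oplus X\to Z$, and let $\Lambda_Z$ be the image of $\mathrm{vol}_Z$ under $z\mapsto(\sigma'(z),\,c\,\mathrm{pr}_W\sigma'(z))$. Then $(-1)^{d_Y}\mathrm{vol}_Y\wedge\Lambda_Z$, with $\mathrm{vol}_Y$ taken in the $Y$-summand, is a lift of $\mathrm{vol}_Y\wedge\mathrm{vol}_Z$. Wedging against $\mathrm{vol}_Y$ removes every $Y$-component of $\iota(\mathrm{vol}_V)$ and of $\Lambda_Z$. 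Moving $\mathrm{vol}_Y$ to the end then costs $(-1)^{d_Yd_Z}$.

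So your second ``move'' is not a positional sign, since $\mathrm{vol}_Y$ is already last on both sides. It is the factor $\det(-\Id_Y)=(-1)^{d_Y}$, which is where you correctly attribute it.

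\textbf{The convention matters.} Under the mirror compatibility $d=f$, the spliced $Y$-row is $(c,0,\Id_Y)$ and the cancelled map is $(w,x)\mapsto gx-hcw$. The same computation then yields $(-1)^{d_Yd_Z}\tau(V,Y)\tau(U,Z)$, with no $(-1)^{d_Y}$.

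Both configurations occur when the lemma is used in Section~\ref{BraidRelationSect}. For \eqref{SES sprime sdoubleprime} with \eqref{SES s sprime}, the entries $d$ and $f$ are $-\alpha_i$ and $+\alpha_i$ (arrows of $s'$). For \eqref{SES sdoubleprime stripleprime} with \eqref{SES sprime sdoubleprime}, both equal $+\beta_i$ (an arrow of $s''$). A self-contained proof should therefore state explicitly that it treats the case $d=-f$.
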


Now, using Lemma~\ref{torsion of maffei SES}, we may rewrite the condition $(s,s''')\in Z_{iji}$ in terms of the short exact sequences corresponding to the three edges on the left-hand side of the hexagon \eqref{braid-hexagon}. These short exact sequences are also listed in \cite[p.~679, item~7]{Maffei2002RemarkQuiverWeyl}.

For $r=i$ or $r=j=i+1$, define
\begin{align*}
    V_r&:=\C^{d_r},&
    V_r'&:=\C^{d_r'},&
    V_r''&:=\C^{d_r''},&
    \widetilde{V}_r&:=\C^{\widetilde d_r},&
    \widetilde{\widetilde{V}}_r&:=\C^{\widetilde{\widetilde d}_r}.
\end{align*}
For the specific pair of indices $i,j=i+1$, define
\[
R_i:=V_{i-1},\qquad R_j:=V_{j+1}\simeq V_{i+2}.
\]
These represent the left and right neighboring vertices of the reflected vertices, respectively.

Suppose $(s,s''')\in Z_{iji}$. Choose $s'$ and $s''$ as in the definition of $Z_{iji}$.

\noindent\textbf{(1)} Since $(s,s')\in Z_i^{\tau_i}(P,P^{\sigma_i})$, condition (C2) in Definition~\ref{zipper} gives a short exact sequence
\begin{equation}\label{SES s sprime}
0\rightarrow V_i'\rightarrow R_i\oplus V_j\rightarrow V_i\rightarrow 0
\end{equation}
whose torsion is
\begin{equation}\label{torsion s sprime}
\tau(V_i',V_i)=\tau_i(P,P^{\sigma_i}).
\end{equation}

\noindent\textbf{(2)} Since $(s',s'')\in Z_j^{\tau_j}(P^{\sigma_i},P^{\sigma_j\sigma_i})$, condition (C2) gives a short exact sequence
\begin{equation}\label{SES sprime sdoubleprime}
0\rightarrow V_j'\rightarrow R_j\oplus V_i'\rightarrow V_j\rightarrow 0
\end{equation}
whose torsion is
\begin{equation}\label{torsion sprime sdoubleprime}
\tau(V_j',V_j)=\tau_j(P^{\sigma_i},P^{\sigma_j\sigma_i}).
\end{equation}
Now, applying Lemma~\ref{torsion of maffei SES} to Equations~\eqref{SES sprime sdoubleprime} and~\eqref{SES s sprime}, we obtain a new short exact sequence
\begin{equation}\label{SES sprime sdoubleprime simplified}
0\rightarrow V_j'\rightarrow R_j\oplus R_i\rightarrow V_i\rightarrow 0
\end{equation}
whose torsion is
\begin{equation}\label{torsion sprime sdoubleprime simplified}
(-1)^{d_id_j+d_j}\tau(V_i',V_i)\tau(V_j',V_j)
=
(-1)^{d_id_j+d_j}\tau_i(P,P^{\sigma_i})\tau_j(P^{\sigma_i},P^{\sigma_j\sigma_i}).
\end{equation}

\noindent\textbf{(3)} Since $(s'',s''')\in Z_i^{\tau_i}(P^{\sigma_j\sigma_i},P^{\sigma_i\sigma_j\sigma_i})$, condition (C2) gives a short exact sequence
\begin{equation}\label{SES sdoubleprime stripleprime}
0\rightarrow V_i''\rightarrow R_i\oplus V_j'\rightarrow V_i'\rightarrow 0
\end{equation}
whose torsion is
\begin{equation}\label{torsion sdoubleprime stripleprime}
\tau(V_i'',V_i')=\tau_i(P^{\sigma_j\sigma_i},P^{\sigma_i\sigma_j\sigma_i}).
\end{equation}
Now, applying Lemma~\ref{torsion of maffei SES} to Equations~\eqref{SES sdoubleprime stripleprime} and~\eqref{SES sprime sdoubleprime}, we obtain a new short exact sequence
\begin{equation}\label{SES sdoubleprime stripleprime simplified}
0\rightarrow V_i''\rightarrow R_i\oplus R_j\rightarrow V_j\rightarrow 0
\end{equation}
whose torsion is
\begin{equation}\label{torsion sdoubleprime stripleprime simplified}
(-1)^{d_i'd_j+d_i'}\tau(V_j',V_j)\tau(V_i'',V_i')
=
(-1)^{d_i'd_j+d_i'}\tau_j(P^{\sigma_i},P^{\sigma_j\sigma_i})\tau_i(P^{\sigma_j\sigma_i},P^{\sigma_i\sigma_j\sigma_i}).
\end{equation}

\noindent\textbf{(4)} Since $(s,\widetilde{s})\in Z_j^{\tau_j}(P,P^{\sigma_j})$, condition (C2) gives a short exact sequence
\begin{equation}\label{SES s stilde}
0\rightarrow \widetilde{V}_j\rightarrow R_j\oplus V_i\rightarrow V_j\rightarrow 0
\end{equation}
whose torsion is
\begin{equation}\label{torsion s stilde}
\tau(\widetilde{V}_j,V_j)=\tau_j(P,P^{\sigma_j}).
\end{equation}

\noindent\textbf{(5)} Now, we use the lemma of Maffei on p.~680 of \cite{Maffei2002RemarkQuiverWeyl} to construct
$A\in m_{P^{\sigma_i\sigma_j},\mathrm{inj}}^{-1}(0)$ and a short exact sequence
\begin{equation}\label{SES stilde A}
0\rightarrow \widetilde{V}_i\rightarrow R_i\oplus \widetilde{V}_j\rightarrow V_i\rightarrow 0
\end{equation}
whose torsion we denote by $\tau(\widetilde{V}_i,V_i)$. To conclude that
$(\widetilde{s},A)\in Z_i^{\tau_i}(P^{\sigma_j},P^{\sigma_i\sigma_j})$, it remains only to check that this torsion agrees with the prescribed torsion in condition (C2) of Definition~\ref{zipper}, namely that
\begin{equation}\label{torsion stilde A desired}
\tau(\widetilde{V}_i,V_i)=\tau_i(P^{\sigma_j},P^{\sigma_i\sigma_j}).
\end{equation}

Now, applying Lemma~\ref{torsion of maffei SES} to Equations~\eqref{SES stilde A} and~\eqref{SES s stilde}, we obtain a short exact sequence
\begin{equation}\label{SES stilde A simplified}
0\rightarrow \widetilde{V}_i\rightarrow R_i\oplus R_j\rightarrow V_j\rightarrow 0.
\end{equation}
Its torsion is
\begin{equation}\label{torsion stilde A simplified}
(-1)^{d_id_j+d_i}
\tau(\widetilde{V}_i,V_i)\tau(\widetilde{V}_j,V_j)
=
(-1)^{d_id_j+d_i}
\tau(\widetilde{V}_i,V_i)\tau_j(P,P^{\sigma_j}).
\end{equation}
On the other hand, the short exact sequence \eqref{SES stilde A simplified} is precisely the short exact sequence \eqref{SES sdoubleprime stripleprime simplified}.\footnote{The dimensions agree because $V_i''=\widetilde V_i$, and Maffei checks that the maps agree as well.} Therefore, comparing \eqref{torsion stilde A simplified} with \eqref{torsion sdoubleprime stripleprime simplified}, we obtain the first equality of torsions:
\begin{equation}\label{First torsion equation}
(-1)^{d_i'd_j+d_i'}
\tau_j(P^{\sigma_i},P^{\sigma_j\sigma_i})
\tau_i(P^{\sigma_j\sigma_i},P^{\sigma_i\sigma_j\sigma_i}) =
(-1)^{d_id_j+d_i}
\tau(\widetilde{V}_i,V_i)
\tau_j(P,P^{\sigma_j}).
\end{equation}

\noindent\textbf{(6)} We wish to show
\begin{equation}\label{desired A stripleprime membership}
(A,s''')\in Z_j^{\tau_j}(P^{\sigma_i\sigma_j},P^{\sigma_j\sigma_i\sigma_j}).
\end{equation}
By Maffei, we have a short exact sequence
\begin{equation}\label{SES A stripleprime}
0\rightarrow V_j'\rightarrow R_j\oplus \widetilde{V}_i\rightarrow\widetilde{V}_j\rightarrow0
\end{equation}
whose torsion we denote by $\tau(V_j',\widetilde{V}_j)$. To conclude \eqref{desired A stripleprime membership}, it remains only to check that this torsion agrees with the prescribed torsion in condition (C2) of Definition~\ref{zipper}, namely that
\begin{equation}\label{torsion A stripleprime desired}
\tau(V_j',\widetilde{V}_j)
=
\tau_j(P^{\sigma_i\sigma_j},P^{\sigma_j\sigma_i\sigma_j}).
\end{equation}
Now, applying Lemma~\ref{torsion of maffei SES} to Equations~\eqref{SES A stripleprime} and~\eqref{SES stilde A}, we obtain a short exact sequence
\begin{equation}\label{SES A stripleprime simplified}
0\rightarrow V_j'\rightarrow R_j\oplus R_i\rightarrow V_i\rightarrow 0.
\end{equation}
Its torsion is
\begin{equation}\label{torsion A stripleprime simplified}
(-1)^{\widetilde d_jd_i+\widetilde d_j}
\tau(V_j',\widetilde V_j)\tau(\widetilde V_i,V_i).
\end{equation}
On the other hand, as in step \textbf{(5)}, the short exact sequence \eqref{SES A stripleprime simplified} is the short exact sequence \eqref{SES sprime sdoubleprime simplified}.\footnote{The dimensions agree because both sequences have source $V_j'$, middle term $R_j\oplus R_i$, and target $V_i$; Maffei checks that the maps agree as well.} Therefore, comparing \eqref{torsion A stripleprime simplified} with \eqref{torsion sprime sdoubleprime simplified}, we obtain the second equality of torsions:
\begin{equation}\label{Second torsion equation}
(-1)^{d_id_j+d_j}
\tau_i(P,P^{\sigma_i})
\tau_j(P^{\sigma_i},P^{\sigma_j\sigma_i}) 
 =
(-1)^{\widetilde d_jd_i+\widetilde d_j}
\tau(V_j',\widetilde V_j)
\tau(\widetilde V_i,V_i).
\end{equation}

It remains to choose the torsion constants $\tau_i(P,P')$, for any $i$ and $i$-swapped parabolics $(P,P')$, appearing in Definition~\ref{zipper} so that Equations~\eqref{First torsion equation} and~\eqref{Second torsion equation} hold after substituting
\[
\tau(\widetilde V_i,V_i)=\tau_i(P^{\sigma_j},P^{\sigma_i\sigma_j})
\qquad\text{and}\qquad
\tau(V_j',\widetilde V_j)=\tau_j(P^{\sigma_i\sigma_j},P^{\sigma_j\sigma_i\sigma_j}).
\]

\begin{defn}\label{torsion choice}
Suppose $P$ is a parabolic whose Levi subgroup has block sizes $(s_1,\dots,s_k)$, with $s_r=d_r-d_{r-1}$. We use the convention $s_{k+1}:=1$. Suppose $(P,P')$ is a pair of $i$-swapped parabolics. Define
\[
\tau_i(P,P'):=(-1)^{\ell_i(P)},
\]
where $\ell_i(P)\in\Z/2\Z$ is defined by
\[
\ell_i(P)\equiv d_{i-1}(s_i+s_{i+2})+s_{i+2}(s_i+s_{i+1})+i\,s_{i+1}(1+s_i)\pmod 2.
\]
\end{defn}

\noindent In particular, $\tau_i(P,P')$ depends on the parity of $i$. If the blocks of $P$ all have the same size, then $\tau_i(P,P')=1$ for every $i$.

\begin{prop}\label{torsion choice satisfies braid signs}
With the choice of torsion constants in Definition~\ref{torsion choice}, Equations~\eqref{First torsion equation} and~\eqref{Second torsion equation} are satisfied after substituting
\[
\tau(\widetilde V_i,V_i)=\tau_i(P^{\sigma_j},P^{\sigma_i\sigma_j})
\qquad\text{and}\qquad
\tau(V_j',\widetilde V_j)=\tau_j(P^{\sigma_i\sigma_j},P^{\sigma_j\sigma_i\sigma_j}).
\]
\end{prop}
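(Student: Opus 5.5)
The verification is a parity computation in $\Z/2\Z$. Every torsion constant is a sign $(-1)^{\ell}$, so Equations~\eqref{First torsion equation} and~\eqref{Second torsion equation} become linear congruences mod $2$ among the exponents. I would first express all the parabolics appearing in the hexagon \eqref{braid-hexagon} in terms of the block sizes of $P$. Write the relevant consecutive blocks of $P$ as $(a,b,c)=(s_i,s_{i+1},s_{i+2})$, with $s_{i+3}=:e$ and $d_{i-1}=:D$. The parabolics then have block triples at positions $i,i+1,i+2$ given by
\[
P:(a,b,c),\quad P^{\sigma_i}:(b,a,c),\quad P^{\sigma_j\sigma_i}:(b,c,a),\quad P^{\sigma_i\sigma_j\sigma_i}:(c,b,a),\quad P^{\sigma_j}:(a,c,b),\quad P^{\sigma_i\sigma_j}:(c,a,b).
\]
All other blocks, and hence $d_{i-1}$, are unchanged. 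The dimensions also become explicit: $d_i=D+a$, $d_j=D+a+b$, $d_i'=D+b$, $\widetilde d_j=D+a+c$.

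Next I would evaluate each $\ell$ from Definition~\ref{torsion choice}. An $i$-swap of a parabolic with triple $(x,y,z)$ gives
\[
\ell_i\equiv D(x+z)+z(x+y)+i\,y(1+x).
\]
A $j$-swap, with $j=i+1$ and $d_{j-1}=D+x$, uses the next triple $(y,z,e)$:
\[
\ell_j\equiv (D+x)(y+e)+e(y+z)+(i+1)z(1+y).
\]
Substituting the six parabolics and the explicit dimensions into \eqref{First torsion equation} and \eqref{Second torsion equation} yields two polynomial identities mod $2$ in $D,a,b,c,e,i$. I would reduce each to a statement of the form ``this polynomial is $\equiv 0$'', using $x^2\equiv x$, and check that all coefficients cancel.

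I would organize the cancellation by grouping monomials. The terms involving $D$ should cancel using only $D^2\equiv D$ and the symmetry of the permuted triples. The terms with $e$ appear only through $\ell_j$ and should cancel pairwise. The terms in $i$ should cancel because the $i$-dependence enters with complementary parities between the $\ell_i$ and $\ell_j$ contributions. What remains is a cubic identity in $a,b,c$, which I would finish by checking all $8$ parity cases of $(a,b,c)$ directly. The boundary case $j=k-1$ uses the convention $s_{k+1}=1$, so $e=1$, and is covered by the same algebra.

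The main obstacle is bookkeeping. One must make sure which $d$'s play the roles of $d_i,d_j,d_i',\widetilde d_j$ in the sign factors of Lemma~\ref{torsion of maffei SES}, and evaluate each $\ell$ at the correct parabolic, with the correct $d_{j-1}$ for the $j$-swaps (it changes as $x$ varies). If a residual term failed to vanish, I would first suspect a misassigned neighbor, $e$ versus $s_{i+3}$ after the swaps, and recheck that step before anything else.
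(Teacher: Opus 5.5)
Your proposal is correct and is essentially the paper's proof. The paper uses the same six local block orders and the same two mod-$2$ formulas for $\ell_i$ and $\ell_j$, and sums the four $\ell$'s in each equation directly. In your variables those sums reduce to $D(a+b)$ and $(b+c)(D+a+1)$, which match $(d_i'-d_i)(d_j+1)$ and $(\widetilde d_j-d_j)(d_i+1)$.

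One minor correction: once the $e$- and $i$-terms cancel (as you predict), the residual identity in $a,b,c$ is at most quadratic, not cubic. So the final check of eight parity cases is unnecessary, though harmless.
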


\begin{proof}
All computations are in $\mathbb F_2$. Set
\[
a=d_{i-1},\qquad u=s_i,\qquad v=s_{i+1},\qquad w=s_{i+2},\qquad z=s_{i+3},\qquad \epsilon\equiv i\pmod 2,
\]
where, if $i=k-2$, the convention in Definition~\ref{torsion choice} gives $z=s_{k+1}=1$. Thus, locally, $P$ has block sizes $(u,v,w,z)$, with the understanding that $z$ may be the auxiliary boundary value $1$.

We first verify \eqref{First torsion equation}. After the stated substitution, \eqref{First torsion equation} is equivalent to
\[
\ell_j(P^{\sigma_i})
+\ell_i(P^{\sigma_j\sigma_i})
+\ell_i(P^{\sigma_j})
+\ell_j(P)
=
(d_i'-d_i)(d_j+1).
\]
Now $d_i'-d_i=d_{i-1}+d_j-2d_i\equiv d_{i-1}+d_j=a+(a+u+v)=u+v$, while $d_j+1=a+u+v+1$. Hence
\[
(d_i'-d_i)(d_j+1)=(u+v)(a+u+v+1)=a(u+v).
\]
Using the local block orders
\[
P=(u,v,w,z),\quad
P^{\sigma_i}=(v,u,w,z),\quad
P^{\sigma_j}=(u,w,v,z),\quad
P^{\sigma_j\sigma_i}=(v,w,u,z),\quad
P^{\sigma_i\sigma_j}=(w,u,v,z),
\]
we compute
\[
\begin{aligned}
\ell_j(P^{\sigma_i})
+\ell_i(P^{\sigma_j\sigma_i})
+\ell_i(P^{\sigma_j})
+\ell_j(P)
&=
\bigl((a+v)(u+z)+z(u+w)+(\epsilon+1)w(1+u)\bigr)\\
&\quad+
\bigl(a(v+u)+u(v+w)+\epsilon w(1+v)\bigr)\\
&\quad+
\bigl(a(u+v)+v(u+w)+\epsilon w(1+u)\bigr)\\
&\quad+
\bigl((a+u)(v+z)+z(v+w)+(\epsilon+1)w(1+v)\bigr)\\
&=a(u+v).
\end{aligned}
\]
This proves \eqref{First torsion equation}.

We now verify \eqref{Second torsion equation}. After the stated substitution, \eqref{Second torsion equation} is equivalent to
\[
\ell_i(P)
+\ell_j(P^{\sigma_i})
+\ell_i(P^{\sigma_j})
+\ell_j(P^{\sigma_i\sigma_j})
=
(\widetilde d_j-d_j)(d_i+1).
\]
Here $\widetilde d_j-d_j=d_i+d_{j+1}-2d_j\equiv d_i+d_{j+1}=(a+u)+(a+u+v+w)=v+w$, while $d_i+1=a+u+1$. Thus
\[
(\widetilde d_j-d_j)(d_i+1)=(v+w)(a+u+1).
\]
On the other hand,
\[
\begin{aligned}
\ell_i(P)
+\ell_j(P^{\sigma_i})
+\ell_i(P^{\sigma_j})
+\ell_j(P^{\sigma_i\sigma_j})
&=
\bigl(a(u+w)+w(u+v)+\epsilon v(1+u)\bigr)\\
&\quad+
\bigl((a+v)(u+z)+z(u+w)+(\epsilon+1)w(1+u)\bigr)\\
&\quad+
\bigl(a(u+v)+v(u+w)+\epsilon w(1+u)\bigr)\\
&\quad+
\bigl((a+w)(u+z)+z(u+v)+(\epsilon+1)v(1+u)\bigr)\\
&=(v+w)(a+u+1).
\end{aligned}
\]
This proves \eqref{Second torsion equation}. Hence the chosen torsion constants make both torsion identities hold.
\end{proof}

\section{Nonisomorphic Varieties with Isomorphic Cotangent Bundles}\label{sectionNonIso}

In this section, we produce a sufficient, though not necessary, numerical criterion for when the underlying varieties $\SL_n/[P,P]$ and $\SL_n/[P^w,P^w]$ are nonisomorphic while their affinized cotangent bundles are isomorphic. The criterion is obtained by computing the dimension of the Zariski tangent space at the origin of the (usually singular) variety $\overline{\SL_n/[P,P]}^{\mathrm{aff}}$. One consequence is that, for any parabolic $P$ with three blocks, the reflection functor which swaps two differently sized adjacent blocks yields a new parabolic $P^{\sigma_i}$ such that $\SL_n/[P,P]$ and $\SL_n/[P^{\sigma_i},P^{\sigma_i}]$ are nonisomorphic.

We proceed by recalling some classical $\SL_n$-invariant theory.

\begin{lemma}[{\cite[Theorem~4.4.4]{DerksenKemperCIT2}}]\label{k=2 generators}
Let $U=\C^n$. Consider the natural $\SL(U)$-action on
$\Hom(\C^r,U)\oplus \Hom(U,\C^s)$. Let $A:\C^r\rightarrow U$ and
$B:U\rightarrow\C^s$ denote the two tautological maps. Then
\[
\C[\Hom(\C^r,U)\oplus \Hom(U,\C^s)]^{\SL(U)}
\]
is generated by the entries of $BA$, together with the coordinates of
$\wedge^n A$ and $\wedge^n B$. Here $\wedge^n A:\wedge^n\C^r\rightarrow\wedge^n U\simeq\C$ and
$\wedge^n B:\wedge^n U\simeq\C\rightarrow\wedge^n\C^s$, so the coordinates
of $\wedge^n A$ and $\wedge^n B$ are the maximal minors of $A$ and $B$,
respectively.
\end{lemma}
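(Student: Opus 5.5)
This is the first fundamental theorem of invariant theory for $\SL(U)$ acting on $r$ vectors and $s$ covectors. I would reduce to the $\GL(U)$ case and then track weights.

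\textbf{Step 1: the $\GL(U)$ invariants.} Write $A=(a_1,\dots,a_r)$ with $a_p\in U$, and let $B$ have rows $b_1,\dots,b_s\in U^*$. By the classical first fundamental theorem for $\GL(U)$, proved via Weyl's polarization and Capelli identities, or via Schur--Weyl duality applied to multihomogeneous components, the $\GL(U)$-invariants are generated by the pairings $\langle b_q,a_p\rangle$, that is, by the entries of $BA$.

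\textbf{Step 2: reduction via the central torus.} The group $\GL(U)$ is generated by $\SL(U)$ and the scalars $\C^*$. So $\C[\Hom(\C^r,U)\oplus\Hom(U,\C^s)]^{\SL(U)}$ decomposes into weight spaces $R_m$ for the residual $\GL(U)/\SL(U)\simeq\C^*$ action, and $\det$ acts on $R_m$ by the $m$-th power.

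\textbf{Step 3: the weight components.} For $m\ge0$ I would show that $R_m$ is spanned by products of $m$ maximal minors of $A$ times $\GL$-invariants. The tool is the standard trick of adjoining an auxiliary covector block. Concretely, consider
\[
f\cdot\det(y_1,\dots,y_n)^{m}
\]
as a function also of auxiliary covectors $y_1,\dots,y_n\in U^*$. Through the contraction identity $\det(y)\det(a_{p_1},\dots,a_{p_n})=\det(\langle y_\ell,a_{p_t}\rangle)$, one relates $R_m$ to $\GL$-invariants of an enlarged tuple. Applying Step 1 there, and then removing the auxiliary covectors by the Cayley $\Omega$-process (polarization), expresses $f$ in terms of brackets $[a_{p_1}\cdots a_{p_n}]$ together with pairings. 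The case $m<0$ is symmetric and uses the maximal minors of $B$.

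\textbf{Step 4: mixed products.} Products containing both $A$-brackets and $B$-brackets reduce to pairings via
\[
[a_{p_1}\cdots a_{p_n}]\,[b_{q_1}\cdots b_{q_n}]=\det\bigl(\langle b_{q_u},a_{p_t}\rangle\bigr).
\]
Hence the algebra is generated by the entries of $BA$ and the maximal minors of $A$ and of $B$.

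The main obstacle is Step 3: carrying out the $\Omega$-process (or Capelli identity) carefully so that the auxiliary variables are eliminated. In practice I would simply cite the first fundamental theorem for $\SL_n$ in \cite[Theorem~4.4.4]{DerksenKemperCIT2}, or Weyl's \emph{Classical Groups}, since the statement is classical.
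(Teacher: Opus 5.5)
Your proposal is correct and takes essentially the same approach as the paper: the paper gives no argument of its own and simply quotes the first fundamental theorem for $\SL_n$ from \cite[Theorem~4.4.4]{DerksenKemperCIT2}, which is exactly what you propose to do at the end. Your sketch is the standard classical proof, combining the $\GL(U)$ first fundamental theorem, the grading by powers of $\det$, and elimination of the auxiliary covectors by Cayley's $\Omega$-process; Step~4 is not needed for generation, since Step~3 already shows that each $R_m$ is spanned by brackets of a single type times pairings.
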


Moreover, Popov--Vinberg \cite{PopovVinbergInvariantTheory} describe the defining relations among these generators. In particular, the defining ideal is generated by quadratic relations, namely the Grassmann--Pl\"ucker relations and their flag analogues. Below, we will only use the consequence that there are no nonzero linear relations among the generators provided by Lemma~\ref{k=2 generators}. 

\begin{thm}\label{zariski tangent space}
Let
\[
V=\bigoplus_{i=1}^{k-1}\Hom(\C^{d_i},\C^{d_{i+1}})
\]
and let $\SL_{\mathbf v}:=\prod_{i=1}^{k-1}\SL_{d_i}$, where $d_1<d_2<\dots<d_k=n$. Let $A_i\in\Hom(\C^{d_i},\C^{d_{i+1}})$ denote the tautological map, for $1\leq i\leq k-1$. Then the algebra of invariants
\[
\C[V]^{\SL_{\mathbf v}}
\]
is generated by the coefficients of
\[
\wedge^{d_i}(A_{k-1}\cdots A_i)\in
\Hom(\wedge^{d_i}\C^{d_i},\wedge^{d_i}\C^n)\simeq \wedge^{d_i}\C^n,
\qquad 1\leq i\leq k-1.
\]
Equivalently, $\C[V]^{\SL_{\mathbf v}}$ is generated by the $d_i\times d_i$ minors of the compositions $A_{k-1}\cdots A_i:\C^{d_i}\rightarrow\C^n$. Moreover, there are
\[
\sum_{i=1}^{k-1}\binom{n}{d_i}
\]
such generators, and the ideal of relations among these generators contains no nonzero linear forms.
\end{thm}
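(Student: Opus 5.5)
We prove the generation statement by induction on $k$, integrating out one $\SL_{d_i}$ factor at a time with Lemma~\ref{k=2 generators}. The cleanest order is to quotient first by $\SL_{d_1}$, then $\SL_{d_2}$, and so on. We carry along as inductive hypothesis the following description. After taking invariants for $\SL_{d_1}\times\cdots\times\SL_{d_{i-1}}$, the ring $\C[V]^{\SL_{d_1}\times\cdots\times\SL_{d_{i-1}}}$ is generated by three families:
\begin{itemize}
\item the entries of $A_{k-1},\dots,A_{i}$;
\item for each $r<i$, the coordinates of $\wedge^{d_r}(A_{i-1}\cdots A_r)$, viewed as a vector in $\wedge^{d_r}\C^{d_i}$.
\end{itemize}
The base case $i=1$ is trivial.

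For the inductive step we apply Lemma~\ref{k=2 generators}, or really its first fundamental theorem form for $\SL(U)$ with $U=\C^{d_i}$. The $\SL_{d_i}$-modules in play are the "input" $\Hom(\C^{d_{i+1}},U)^*\simeq\Hom(U,\C^{d_{i+1}})$, coming from $A_i$, and the vectors $\omega_r:=\wedge^{d_r}(A_{i-1}\cdots A_r)\in\wedge^{d_r}U$. The input $A_{i-1}$ enters only through these vectors, with $r=i-1$ giving $\wedge^{d_{i-1}}A_{i-1}$.

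The invariants of $\SL(U)$ acting on $\Hom(U,\C^{d_{i+1}})\oplus\bigoplus_r\wedge^{d_r}U$ are the contractions $\wedge^{d_r}A_i(\omega_r)$ together with $\det$-type invariants. Since $d_i<d_{i+1}$, the only determinant-type invariant from $A_i$ is $\wedge^{d_i}A_i$. The contraction $\wedge^{d_r}A_i(\omega_r)$ equals $\wedge^{d_r}(A_i\cdots A_r)$, which gives exactly the new generators in the hypothesis with $i$ replaced by $i+1$. We would justify the FFT for these fundamental representations by the standard first fundamental theorem: realize $\omega_r$ as coming from decomposable tensors, reducing to Lemma~\ref{k=2 generators} with $r$ vectors and $\SL$-invariance. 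Alternatively, we can invoke Reynolds-operator surjectivity, namely that $\C[X]^{G}$ is the image of invariants of a larger space mapping onto $X$. This works because $\C[V]^{\SL_{d_1}\times\cdots}$ is a quotient of invariants of $\Hom(\C^{d_{i-1}},U)\oplus\Hom(U,\C^{d_{i+1}})$ tensor the earlier generators. This step, passing from vectors $\omega_r\in\wedge^{d_r}U$ to honest matrices so that Lemma~\ref{k=2 generators} applies, is the main technical obstacle. The key observation is that $\SL_{d_i}$ acts on all earlier data only through $A_{i-1}$, so one may first take $\SL_{d_i}$-invariants of the polynomial ring in $A_{i-1},A_i$ and the other entries, and then restrict to the earlier invariants. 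The step ends at $i=k$, where $A_{k-1}$ is absorbed into $\C^n$ and we obtain the stated generators.

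Counting is immediate: for each $i$, the coordinates of $\wedge^{d_i}\C^n$ number $\binom{n}{d_i}$. For the absence of linear relations we use the $\SL_n\times T$-grading, where $T=\prod\GL_{d_i}/\SL_{d_i}$ acts on $V$ commuting with $\SL_{\mathbf v}$. The generators for index $i$ span a copy of the irreducible $\SL_n$-module $\wedge^{d_i}\C^n$ with a distinct $T$-weight (determinant character of $\GL_{d_i}$ only). The distinct weights show that a linear relation must be homogeneous, hence lie within a single index $i$. Within a fixed $i$ the span is irreducible, so the relation vanishes if that span is nonzero. Nonvanishing follows by evaluating at the standard inclusions $\alpha^\circ$ from Proposition~\ref{oriented basic affine}, where the Plücker coordinate $e_1\wedge\cdots\wedge e_{d_i}$ equals $1$. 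Hence each $\binom{n}{d_i}$-block is linearly independent, and there are no linear relations overall.
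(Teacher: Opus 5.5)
Your overall strategy (remove one gauge factor at a time with Lemma~\ref{k=2 generators}, using reductivity to push invariants through surjections) is the paper's, and your inductive hypothesis is true. But the inductive step as you justify it has a genuine gap. Because you go from left to right, at vertex $i$ the group $\SL(U)$, $U=\C^{d_i}$, acts not on an honest matrix but on the vectors $\omega_r\in\wedge^{d_r}U$. You assert that the $\SL(U)$-invariants of $\Hom(U,\C^{d_{i+1}})\oplus\bigoplus_{r<i}\wedge^{d_r}U$ are generated by the contractions $\wedge^{d_r}A_i(\omega_r)$ and the coordinates of $\wedge^{d_i}A_i$. This is false. Each of these generators has positive degree in $A_i$, but there are invariants of positive degree in the $\omega_r$ and degree $0$ in $A_i$. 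Examples are $\omega_1\wedge\omega_2\in\wedge^3\C^3\simeq\C$ when $\mathbf v=(1,2,3)$, $n=4$, $i=3$, and the Pfaffian $\omega_1\wedge\omega_1\in\wedge^4\C^4\simeq\C$ when $\mathbf v=(2,4)$, $n=5$, $i=2$; there are also mixed contraction--wedge invariants. These do vanish on the actual data, where each $\omega_r$ is decomposable and lies in $\wedge^{d_r}\operatorname{Im}(A_{i-1})$ with $\dim\operatorname{Im}(A_{i-1})=d_{i-1}<d_i$. However, the Reynolds argument needs the image of \emph{every} invariant of the polynomial ring to lie in the algebra generated by your candidates, and the proposal gives no argument for this. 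Your first repair does not supply one: writing the $\omega_r$ as wedges of columns of auxiliary matrices and applying Lemma~\ref{k=2 generators} to their concatenation reintroduces maximal minors mixing columns from different $\omega_r$, which is the same problem.

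Your second repair (the ``key observation'') is the right idea, but as stated it does not connect to your inductive hypothesis. It is true that $(\C[V]^{H})^{\SL_{d_i}}=(\C[V]^{\SL_{d_i}})^{H}$ for $H=\SL_{d_1}\times\cdots\times\SL_{d_{i-1}}$. It is also true that Lemma~\ref{k=2 generators} applies verbatim to $\C[V]^{\SL_{d_i}}$, replacing $(A_{i-1},A_i)$ by the single arrow $A_iA_{i-1}$ plus the coordinates of $\wedge^{d_i}A_i$ (the term $\wedge^{d_i}A_{i-1}$ vanishes since $d_{i-1}<d_i$). But then you need the $H$-invariants for the \emph{contracted} quiver, which your hypothesis about $\C[V]^H$ for the original quiver does not give. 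So the induction has to run over all quivers of this shape simultaneously. The paper does exactly this, contracting from the right. It takes $\SL_{d_{k-1}}$-invariants first, where the new vector $\wedge^{d_{k-1}}A_{k-1}$ is inert under the remaining gauge group, and then repeats at $d_{k-2},\dots,d_1$. At every stage the group acts on matrices (the incoming arrow and the composite $A_{k-1}\cdots A_i$), never on exterior-power vectors.

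Your treatment of linear relations is correct and differs from the paper's, which only cites Popov--Vinberg for quadratic generation of the relations. Your $\SL_n\times T$-weight plus Schur argument is self-contained. Moreover, since no product of two or more generators has the $T$-weight of a single generator, it shows that no relation has a nonzero linear part, which is what Corollary~\ref{nonisomorphic base} actually needs.
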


\begin{proof}
The case $k=2$ is Lemma~\ref{k=2 generators}, applied with $U=\C^{d_1}$ and with no vector part. Assume $k\geq 3$. We take invariants successively from right to left. At the last internal vertex $U=\C^{d_{k-1}}$, apply Lemma~\ref{k=2 generators} with $A=A_{k-2}:\C^{d_{k-2}}\rightarrow U$ and $B=A_{k-1}:U\rightarrow\C^n$. Since $d_{k-2}<d_{k-1}$, the term $\wedge^{d_{k-1}}A_{k-2}$ contributes no generators. Thus taking $\SL_{d_{k-1}}$-invariants replaces the last two arrows by their composite $A_{k-1}A_{k-2}:\C^{d_{k-2}}\rightarrow\C^n$, and also contributes the coordinates of $\wedge^{d_{k-1}}A_{k-1}$.

Repeating the same argument at the vertices $\C^{d_{k-2}},\dots,\C^{d_1}$ gives precisely the coordinates of $\wedge^{d_i}(A_{k-1}\cdots A_i)$ for $1\leq i\leq k-1$. Since the groups involved are reductive, taking invariants successively is exact, so these successive generating sets generate $\C[V]^{\SL_{\mathbf v}}$.

The number of generators is $\sum_{i=1}^{k-1}\binom{n}{d_i}$, because $\wedge^{d_i}(A_{k-1}\cdots A_i)$ has $\binom{n}{d_i}$ coordinates after choosing the standard volume form on $\wedge^{d_i}\C^{d_i}$. Finally, the defining ideal among these Pl\"ucker-type generators is generated by quadratic relations, by Popov--Vinberg \cite{PopovVinbergInvariantTheory}. Hence the ideal of relations contains no nonzero linear forms.
\end{proof}

\begin{cor}\label{nonisomorphic base}
Suppose $P$ is a parabolic whose Levi subgroup has block sizes $s_1,\dots,s_k$, with $s_i=d_i-d_{i-1}$. Let $w\in S_k$, and let $P^w$ be the corresponding standard parabolic obtained by permuting the blocks of $P$ by $w$. Let $d_i'$ denote the partial sums of the block sizes of $P^w$. Suppose
\[
\sum_{i=1}^{k-1}\binom{n}{d_i}\neq \sum_{i=1}^{k-1}\binom{n}{d_i'}.
\]
Then the affine varieties $\overline{\SL_n/[P,P]}^{\mathrm{aff}}$ and $\overline{\SL_n/[P^w,P^w]}^{\mathrm{aff}}$ are not isomorphic, while the affine varieties $\overline{T^*(\SL_n/[P,P])}^{\mathrm{aff}}$ and $\overline{T^*(\SL_n/[P^w,P^w])}^{\mathrm{aff}}$ are isomorphic.
\end{cor}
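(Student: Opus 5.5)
The proof splits into two independent claims. The isomorphism of affinized cotangent bundles is immediate from Theorem~\ref{classical reflection functors}, which supplies $\Phi_w(P,P^w)$ for any $w\in S_k$. So the only work is the non-isomorphism of the base affinizations. The plan is to distinguish them by an intrinsic invariant, namely the dimension of the Zariski tangent space at a canonical point.

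First, Corollary~\ref{generalized base affine quiver} identifies $\overline{\SL_n/[P,P]}^{\mathrm{aff}}$ with $V//\SL_{\mathbf v}=\Spec(\C[V]^{\SL_{\mathbf v}})$. This coordinate ring is graded, since $\C[V]$ is graded by polynomial degree and the $\SL_{\mathbf v}$-action is linear. The degree-zero part is $\C$, so the ring is connected graded, and the irrelevant ideal $\mathfrak m_0$ defines a point $0$.

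By Theorem~\ref{zariski tangent space}, $\C[V]^{\SL_{\mathbf v}}$ is generated by $N:=\sum_{i}\binom{n}{d_i}$ homogeneous elements of positive degree, and the ideal of relations contains no nonzero linear forms. Write the ring as $\C[x_1,\dots,x_N]/I$ with $I\subseteq(x_1,\dots,x_N)^2$. Then $\mathfrak m_0/\mathfrak m_0^2\cong\C^N$, so the Zariski tangent space at $0$ has dimension $N$.

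Here the no-linear-relations statement has to be read for the polynomial presentation, not just the graded one. The generators have differing degrees $d_i$ (in the entries of the arrows), so a relation mixing degrees could a priori contain a degree-one term in some generator. However, $I$ is homogeneous for the grading in which $x$ has the degree of its generator. A homogeneous element of $I$ of generator-degree $m$ with a linear term $c\,x$, where $\deg x=m$, would be a nonzero linear form in the generators of degree $m$ plus terms in $(x)^2$. Theorem~\ref{zariski tangent space}, via Popov--Vinberg, rules this out, because it gives the ideal generated by quadrics in these coordinates. So $I\subseteq\mathfrak m^2$.

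The main obstacle is intrinsicality: an abstract isomorphism of varieties need not send $0$ to $0$ or respect the grading. I would handle this by characterizing $0$ intrinsically. It is the unique fixed point of the contracting $\C^*$-action, but the action is extra structure, so I would instead argue as follows.

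The grading $\C^*$-action contracts every point to $0$, so $0$ lies in the closure of every $\C^*$-orbit. Upper semicontinuity of $\dim T_x$ then gives $\dim T_x\le\dim T_0$ for all $x$. Hence
\[N=\max_x \dim T_x\]
is the maximal tangent space dimension over all points of $\overline{\SL_n/[P,P]}^{\mathrm{aff}}$, which is an isomorphism invariant. The same computation for $P^w$ gives $N'=\sum_i\binom{n}{d_i'}$. An isomorphism would force $N=N'$, contradicting the hypothesis. The only delicate step is this semicontinuity argument; the rest is bookkeeping from the cited results.
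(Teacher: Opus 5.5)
Your proposal is correct and is essentially the paper's proof. You identify $\overline{\SL_n/[P,P]}^{\mathrm{aff}}$ with $V/\!/\SL_{\mathbf v}$, read off $\dim T_0=\sum_i\binom{n}{d_i}$ from Theorem~\ref{zariski tangent space} (your concern about $I\subseteq\mathfrak m^2$ is settled, as in the paper, by the quadratic generation of the relations), show this is the maximal tangent-space dimension, and obtain the cotangent isomorphism from Theorem~\ref{classical reflection functors}. The only difference is that the paper bounds $\dim T_x\le N$ for every $x$ directly from the closed embedding into $\A^N$ given by the $N$ generators, which is simpler than your contracting-$\C^*$ plus upper-semicontinuity argument, though yours is also valid.
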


\begin{proof}
By Corollary~\ref{generalized base affine quiver}, we may identify $\overline{\SL_n/[P,P]}^{\mathrm{aff}}$ with $V//\SL_{\mathbf v}$. By Theorem~\ref{zariski tangent space}, the coordinate ring $\C[V]^{\SL_{\mathbf v}}$ has $\sum_{i=1}^{k-1}\binom{n}{d_i}$ generators, and the ideal of relations among these generators contains no nonzero linear forms. Therefore the Zariski tangent space at the origin has dimension
\[
\dim T_0\big(\overline{\SL_n/[P,P]}^{\mathrm{aff}}\big)=\sum_{i=1}^{k-1}\binom{n}{d_i}.
\]
Moreover, the same generators embed $\overline{\SL_n/[P,P]}^{\mathrm{aff}}$ into affine space of this dimension, so every Zariski tangent space of $\overline{\SL_n/[P,P]}^{\mathrm{aff}}$ has dimension at most $\sum_{i=1}^{k-1}\binom{n}{d_i}$. Thus this number is the maximal dimension of a Zariski tangent space on $\overline{\SL_n/[P,P]}^{\mathrm{aff}}$. Applying the same argument to $P^w$, the assumed inequality shows that the two affine varieties have different maximal tangent-space dimensions, and hence cannot be isomorphic. The statement about the isomorphism of the affinized cotangent bundles follows from Theorem~\ref{classical reflection functors}.
\end{proof}

\begin{cor}\label{3 blocks}
Suppose $P$ is a parabolic with three blocks, and let $\sigma\in S_3$ be a simple reflection. Then $\overline{\SL_n/[P,P]}^{\mathrm{aff}}$ and $\overline{\SL_n/[P^\sigma,P^\sigma]}^{\mathrm{aff}}$ are isomorphic if and only if $\sigma$ swaps two adjacent blocks of the same size.
\end{cor}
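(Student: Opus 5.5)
Write the blocks of $P$ as $(s_1,s_2,s_3)$ with $s_1+s_2+s_3=n$, so $d_1=s_1$ and $d_2=s_1+s_2$. There are two simple reflections, and I treat them separately.

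\emph{Swapping blocks $1$ and $2$.} Here $d_1'=s_2$ and $d_2'=d_2$.

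\emph{Swapping blocks $2$ and $3$.} Here $d_1'=d_1$ and $d_2'=s_1+s_3=n-s_2$.

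\textbf{The ``if'' direction.} If the swapped blocks have equal size, then $P^\sigma=P$ as standard parabolics. Hence the two affine varieties coincide and there is nothing to prove.

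\textbf{The ``only if'' direction.} Assume the swapped blocks have different sizes. By Corollary~\ref{nonisomorphic base}, it suffices to show
\[
\sum_{i=1}^{2}\binom{n}{d_i}\neq\sum_{i=1}^{2}\binom{n}{d_i'}.
\]
Only one partial sum changes in each case, so this reduces to a single binomial inequality.

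In the first case we need $\binom{n}{s_1}\neq\binom{n}{s_2}$ whenever $s_1\neq s_2$. Since $\binom{n}{a}=\binom{n}{b}$ with $0\le a,b\le n$ forces $a=b$ or $a+b=n$, we would need $s_1+s_2=n$. That would give $s_3=0$, which is impossible because all blocks are nonempty.

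In the second case we need $\binom{n}{s_1+s_2}\neq\binom{n}{s_1+s_3}$. Again equality would force either $s_2=s_3$, which is excluded, or $2s_1+s_2+s_3=n$, i.e.\ $s_1=0$, which is also impossible.

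\textbf{Key fact and main obstacle.} The argument rests on the elementary fact that $\binom{n}{a}=\binom{n}{b}$ forces $a=b$ or $a+b=n$. This holds because the binomial coefficients are strictly increasing for $a\le n/2$ and symmetric about $n/2$.

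The main subtlety is the bookkeeping. One has to correctly identify the partial sums $d_i'$ of $P^\sigma$, which the paper computes as $d_i'=d_{i-1}+d_{i+1}-d_i$. One also has to remember that positivity of every block is exactly what rules out the symmetric coincidence $a+b=n$.
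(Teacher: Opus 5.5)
Your proposal is correct and follows essentially the same route as the paper. The paper likewise handles the equal-block case by noting that $P^\sigma=P$. Otherwise it reduces, via Corollary~\ref{nonisomorphic base}, to comparing $\binom{n}{s_1}$ with $\binom{n}{s_2}$ (respectively $\binom{n}{s_1+s_2}$ with $\binom{n}{s_1+s_3}$), and uses positivity of the blocks to rule out the symmetric coincidence $a+b=n$.
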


\begin{proof}
Write the block sizes of $P$ as $(a,b,c)$, so $n=a+b+c$. If $\sigma=\sigma_1$, then $P^\sigma$ has block sizes $(b,a,c)$, and the two tangent-space dimensions from Corollary~\ref{nonisomorphic base} are
\[
\binom{n}{a}+\binom{n}{a+b}
\qquad\text{and}\qquad
\binom{n}{b}+\binom{n}{a+b}.
\]
These are equal if and only if $\binom{n}{a}=\binom{n}{b}$, which happens if and only if $a=b$ or $b=n-a$. The second possibility would force $c=0$, impossible. Hence equality occurs if and only if $a=b$.

If $\sigma=\sigma_2$, then $P^\sigma$ has block sizes $(a,c,b)$, and the corresponding dimensions are
\[
\binom{n}{a}+\binom{n}{a+b}
\qquad\text{and}\qquad
\binom{n}{a}+\binom{n}{a+c}.
\]
These are equal if and only if $\binom{n}{a+b}=\binom{n}{a+c}$, which happens if and only if $a+b=a+c$ or $a+c=n-(a+b)$. The first possibility gives $b=c$, while the second gives $a=0$, impossible. Hence equality occurs if and only if $b=c$.

Thus, if $\sigma$ swaps blocks of different sizes, Corollary~\ref{nonisomorphic base} implies that the affinizations are not isomorphic. If $\sigma$ swaps two equal adjacent blocks, then $P^\sigma=P$, so the affinizations are identical.
\end{proof}

We will return to a particularly interesting three-block case -- namely the parabolic subgroups corresponding to the partition $(n-2)+1+1$ -- in Section~\ref{quadricconex} below.

\section{Examples}\label{ExamplesSection}

\subsection{The Example of $\SL_2/U$}\label{sl2ex}

Let us unravel the intertwiner for $\SL_2$. The diagram for the zipper variety $Z_1$ is
\[
\begin{tikzcd}[ampersand replacement=\&]
\overset{1}{\bullet}
  \arrow[rd, "{\left[\begin{smallmatrix} a \\ b \end{smallmatrix}\right]}"]
\& \\
\& \overset{2}{\bullet}
  \arrow[lu, "{\left[\begin{smallmatrix} c & d \end{smallmatrix}\right]}", shift left=2]
  \arrow[ld, "{\left[\begin{smallmatrix} x & y \end{smallmatrix}\right]}"'] \\
\overset{1}{\bullet}
  \arrow[ru, "{\left[\begin{smallmatrix} v \\ w \end{smallmatrix}\right]}"', shift right=2]
\end{tikzcd}
\]

\noindent Condition (C1) of Definition~\ref{zipper} is vacuous. Condition (C2) of Definition~\ref{zipper} says that the following sequence is exact:
\begin{equation}\label{SL2 C2 exact sequence}
0 \longrightarrow \C
\xrightarrow{\left[\begin{smallmatrix} a \\ b \end{smallmatrix}\right]}
\C^2
\xrightarrow{\left[\begin{smallmatrix} x & y \end{smallmatrix}\right]}
\C
\longrightarrow 0.
\end{equation}
Thus
\begin{equation}\label{SL2 C2 relation}
xa+yb=0.
\end{equation}
The preservation of volume forms says, locally on $x\neq0$ or $y\neq0$, respectively, that
\begin{equation}\label{SL2 torsion local equations}
\begin{bmatrix}a\\b\end{bmatrix}\wedge\begin{bmatrix}x^{-1}\\0\end{bmatrix}=1
\qquad\text{or}\qquad
\begin{bmatrix}a\\b\end{bmatrix}\wedge\begin{bmatrix}0\\ y^{-1}\end{bmatrix}=1.
\end{equation}
Together with \eqref{SL2 C2 relation}, these equations give
\begin{equation}\label{SL2 ab xy relation}
a=y,\qquad b=-x.
\end{equation}
Finally, condition (C3) of Definition~\ref{zipper} says
\begin{equation}\label{SL2 C3 equation}
\begin{bmatrix}v\\w\end{bmatrix}\begin{bmatrix}x& y\end{bmatrix}
=
\begin{bmatrix}a\\b\end{bmatrix}\begin{bmatrix}c&d\end{bmatrix}
-
\begin{bmatrix}ac+bd&0\\0&ac+bd\end{bmatrix}.
\end{equation}
Combining \eqref{SL2 C3 equation} with \eqref{SL2 ab xy relation}, and using exactness of \eqref{SL2 C2 exact sequence}, gives
\begin{equation}\label{SL2 cd vw relation}
v=d,\qquad c=-w.
\end{equation}
In summary, the affine closure of the zipper is
\begin{equation}\label{SL2 zipper graph}
Z=\operatorname{Spec}\big(\C[a,b,c,d,x,y,v,w]/(a-y,b+x,v-d,c+w)\big).
\end{equation}
Thus the reflection functor is precisely the symplectic Fourier transform
\begin{align*}
\Phi:\operatorname{Spec}(\C[a,b,c,d])&\rightarrow\operatorname{Spec}(\C[v,w,x,y]),\\
(a,b,c,d)&\longmapsto (v,w,x,y)=(d,-c,-b,a).
\end{align*}

\subsection{\texorpdfstring{The example of $\SL_3/[P,P]$}{The example of SL3/PP}}\label{1323ex}

We consider the two standard parabolic subgroups of $\SL_3$ with two blocks. The first quiver is
\begin{equation}\label{basicquiversl3}
\begin{tikzcd}
\overset{1}{\bullet} \arrow[r] & \overset{3}{\bullet}.
\end{tikzcd}
\end{equation}
Here $V=\Hom(\C^1,\C^3)$ and the gauge group is $\SL_1=\{1\}$. Thus $V/\!/\SL_1=\A^3$, and the $\mathfrak{sl}_1$-valued moment map is trivial. Hence
\[
m^{-1}(0)/\!/\SL_1=\Hom(\C^1,\C^3)\oplus\Hom(\C^3,\C^1)\cong\A^6.
\]

The reflected quiver is
\begin{equation}\label{reflectedquiversl3}
\begin{tikzcd}
\overset{2}{\bullet} \arrow[r] & \overset{3}{\bullet}.
\end{tikzcd}
\end{equation}
Here $(V,\SL_{\mathbf v})=(\Hom(\C^2,\C^3),\SL_2)$. By Corollary~\ref{generalized base affine quiver}, the affine quotient $V/\!/\SL_2$ identifies with $\overline{\SL_3/[P,P]}^{\mathrm{aff}}$, where $P$ is the $2+1$ parabolic. In this special case, $\overline{\SL_3/[P,P]}^{\mathrm{aff}}\cong\A^3$. Explicitly, for $g=(g_{ij})\in\SL_3$, the map $\SL_3/[P,P]\rightarrow\A^3$ is
\begin{equation}\label{SL3 row map}
g[P,P]\longmapsto
\begin{pmatrix}
(g^{-1})_{31}&(g^{-1})_{32}&(g^{-1})_{33}
\end{pmatrix}
=
\begin{pmatrix}
g_{21}g_{32}-g_{31}g_{22} &
g_{31}g_{12}-g_{11}g_{32} &
g_{11}g_{22}-g_{21}g_{12}
\end{pmatrix}.
\end{equation}
Its image is $\A^3\setminus\{0\}$. Hence Hartogs' Lemma gives $\overline{\SL_3/[P,P]}^{\mathrm{aff}}\cong\A^3$.

We now compare this with the Hamiltonian reduction of the doubled reflected quiver
\begin{equation}\label{doubledsl3quiver}
\begin{tikzcd}
\overset{2}{\bullet} \arrow[r, "A=(a_{ij})", shift left] & \overset{3}{\bullet} \arrow[l, "B=(b_{kl})", shift left]
\end{tikzcd}.
\end{equation}
Let
\[
S:=\{(A,B):BA=\lambda\Id_2\text{ for some }\lambda\in\C\}\subset \Hom(\C^2,\C^3)\oplus\Hom(\C^3,\C^2)
\]
be the shell. The $\SL_2$-action is $(A,B)\mapsto(Ag^{-1},gB)$. Let $p$ be the row vector of signed $2\times2$ minors of $A$, and let $q$ be the column vector of signed $2\times2$ minors of $B$:
\begin{equation}\label{pqcoords}
p=p(A)=
\begin{pmatrix}
a_{21}a_{32}-a_{31}a_{22}
&
a_{31}a_{12}-a_{11}a_{32}
&
a_{11}a_{22}-a_{21}a_{12}
\end{pmatrix},
\quad
q=q(B)=
\begin{pmatrix}
b_{12}b_{23}-b_{13}b_{22}\\
b_{13}b_{21}-b_{11}b_{23}\\
b_{11}b_{22}-b_{12}b_{21}
\end{pmatrix}.
\end{equation}
Both $p$ and $q$ are invariant under this action, since $\det(g)=1$. Let $C:=AB$; its entries are also invariant, and $\tr(C)=\tr(AB)=\tr(BA)=2\lambda$.

\begin{prop}\label{nilporboprop}
The invariant ring $\C[S]^{\SL_2}$ is generated by $\lambda$, the entries of $p$ and $q$, and the entries of $C$. Set
\begin{equation}\label{nilpmatrixsl4}
M(\lambda,C,p,q):=
\begin{pmatrix}
\lambda & p \\
-q & C-\lambda\Id_3
\end{pmatrix}.
\end{equation}
The full ideal of relations among these generators is generated by $2\lambda-\tr(C)$ and the $2\times2$ minors of $M(\lambda,C,p,q)$. In particular, the following identities hold:
\begin{equation}\label{relations23}
2\lambda=\tr(C),\quad pC=0,\quad Cq=0,\quad C^2=\lambda C,\quad qp=\Adj(C),\quad \lambda^2=pq.
\end{equation}
Thus $X:=S/\!/\SL_2$ is isomorphic to the closure of the minimal nilpotent orbit in $\mathfrak{sl}_4$ via the map $(\lambda,C,p,q)\mapsto M(\lambda,C,p,q)$. Using $\mathfrak{so}(6)\cong\mathfrak{sl}_4$, this is also the closure of the minimal nilpotent orbit in $\mathfrak{so}(6)$. In particular, $X$ has dimension $6$ and has an isolated conical singularity at the origin.
\end{prop}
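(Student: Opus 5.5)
The plan is to compute $\C[S]^{\SL_2}$ in two stages: first describe the invariants of the ambient space $\Hom(\C^2,\C^3)\oplus\Hom(\C^3,\C^2)$, then restrict them to the shell. By Lemma~\ref{k=2 generators}, applied with $U=\C^2$, $r=s=3$, the invariant ring $\C[\Hom(\C^2,\C^3)\oplus\Hom(\C^3,\C^2)]^{\SL_2}$ is generated by the entries of $AB$ (that is, $C$), together with the $2\times 2$ minors of $A$ and of $B$ (that is, $p$ and $q$). The shell $S$ is a closed $\SL_2$-stable subvariety cut out by the off-diagonal and difference-of-diagonal entries of $BA$. Since $\SL_2$ is reductive, restriction of invariants is surjective, so $\C[S]^{\SL_2}$ is generated by the images of $C,p,q$. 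The function $\lambda$ then also lies in this algebra, because $\lambda=\tfrac12\tr(C)$ on $S$, which gives the generating statement.

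Next I verify the relations \eqref{relations23} on $S$, all directly from $BA=\lambda\Id_2$ and Cramer-type identities.
\begin{itemize}
\item $pA=0$ and $Bq=0$: the signed minors are cofactor-expansion vectors, so these are the usual determinant identities. Hence $pC=pAB=0$ and $Cq=ABq=0$.
\item $C^2=A(BA)B=\lambda C$.
\item $qp=\Adj(AB)$: this follows from Cauchy--Binet for $\wedge^2$, since $\Adj$ of a rank $\le 2$ $3\times3$ matrix is given by the product of its $2\times2$ minors.
\item $pq=\det(BA)=\lambda^2$: again Cauchy--Binet.
\end{itemize}
From these I check that every $2\times2$ minor of $M$ vanishes. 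The top-row minors are the $\lambda$–$p$, $p$–$q$ and $pC$ relations; the lower block ones are $C^2=\lambda C$ combined with $qp=\Adj C$, using that $C-\lambda\Id$ and $C$ relate via $\tr C=2\lambda$. Thus $M$ has rank $\le 1$ and $\tr M=\lambda+\tr C-3\lambda=0$, so $M$ lies in the closure $\overline{\mathcal O_{\min}}\subset\mathfrak{sl}_4$ of the minimal nilpotent orbit.

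To show these generate the full ideal, I consider the map $X\to\overline{\mathcal O_{\min}}$, $(\lambda,C,p,q)\mapsto M$. It is a closed embedding, because $\lambda,C,p,q$ are recovered linearly from $M$, and the relation $2\lambda=\tr C$ is exactly tracelessness. So it suffices to show the map is surjective, i.e.\ that $X$ is $6$-dimensional and irreducible with image all of $\overline{\mathcal O_{\min}}$. The ideal of $\overline{\mathcal O_{\min}}$ in $\mathfrak{sl}_4$ is classically generated by the $2\times2$ minors, since the rank $\le1$ determinantal ideal is prime. Once the image is dense, this shows the ideal of $X$ is exactly this prime ideal plus the linear trace relation.

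For density, I construct preimages explicitly. Given a generic rank-one traceless $M=uv^{T}$ with $\lambda\ne0$, I read off $p,q,C$ and produce $(A,B)\in S$. Take $A$ with image $\ker p\supset\operatorname{Im}C$, choose $B$ so that $AB=C$, and use the torsion/determinant freedom ($\SL_2$ fixes the scale of minors) so that $p(A),q(B)$ match. Then $BA=\lambda\Id$ follows from $C^2=\lambda C$ and injectivity of $A$. The dimension count is $\dim S-\dim\SL_2=(12-3)-3=6$, where the shell is codimension $3$, matching $\dim\mathcal O_{\min}(\mathfrak{sl}_4)=6$. Since $X$ is dominated by $S$ and $\overline{\mathcal O_{\min}}$ is irreducible, surjectivity onto a dense set plus closedness gives the isomorphism. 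The identification with $\mathfrak{so}(6)$ and the isolated conical singularity are then standard facts about minimal orbits.

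The main obstacle is the surjectivity and dimension step: $S$ need not be irreducible or reduced, and the invariant ring of $S$ uses the reduced structure. I would handle this by working on the open set $\lambda\ne0$, where $S$ is smooth, and then arguing that the $\lambda=0$ part of $S$ maps into the closure as well. That part can contain extra components, but their images still satisfy the minor relations, so they lie in $\overline{\mathcal O_{\min}}$ and the comparison with the prime determinantal ideal goes through.
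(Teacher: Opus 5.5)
There is a genuine gap at the step where you assert that every $2\times2$ minor of $M$ vanishes ``from these'' identities, because \eqref{relations23} does not imply $\operatorname{rk}M\le1$. Put $D:=C-\lambda\Id_3$. Some minors do follow. Cayley--Hamilton with $C^2=\lambda C$ and $\tr C=2\lambda$ gives $\Adj(D)=0$ and $qp=\Adj(C)=\lambda^2\Id_3-\lambda C=-\lambda D$, which kills the $2\times2$ minors of $D$ and the minors $\lambda D_{ab}+q_ap_b$. The minors $p_bD_{ac}-p_cD_{ab}$ and $q_aD_{bc}-q_bD_{ac}$, however, do not follow from $pC=0$, from $Cq=0$, or from anything else in the list. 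From $qp=-\lambda D$ you only get that $\lambda$ times them is $0$. Concretely, take $\lambda=0$, $q=0$, $p=(1,0,0)$ and $C=E_{23}$ (a matrix unit). This point satisfies every identity in \eqref{relations23}, yet the resulting $M$ has rank $2$. Your last paragraph does not close this gap. On $\{\lambda=0\}$ you simply assert that the images ``still satisfy the minor relations''. Since you explicitly leave open that $S$ has extra components there, you also cannot extend from $\{\lambda\neq0\}$ by continuity. So you never actually show that $X$ lands in $\overline{\mathcal O_{\min}}$, and everything afterwards depends on that.

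The missing input is the finer relation $DA=A(BA-\lambda\Id_2)=0$, and dually $BD=0$, rather than its consequences $pC=0$ and $Cq=0$. Every row $r$ of $D$ satisfies $rA=0$, and for such $r$ one has $r_bp_c=r_cp_b$ identically. For example, writing $A_1,A_2,A_3$ for the rows of $A$, the quantity $r_1p_2-r_2p_1$ is the determinant of the $2\times2$ matrix with rows $A_3$ and $r_1A_1+r_2A_2=-r_3A_3$, hence $0$. Dually, the columns of $D$ are proportional to $q$. This kills all minors pointwise on $S$, with no density argument needed. The paper instead uses $DA=0$ on the locus where $\operatorname{rk}A=2$ to write $D=up$. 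It then deduces $pu=-\lambda$ and $q=-\lambda u$, so that $M=\begin{pmatrix}1\\u\end{pmatrix}\begin{pmatrix}\lambda&p\end{pmatrix}$. Finally it shows this locus is dense in $S$: it is irreducible of dimension $9$, and its complement has dimension $8$, while every component of $S$ has dimension at least $9$ since $S$ is cut out by three equations in $\A^{12}$. That density argument also disposes of your worry about extra components. Once this step is repaired, your explicit lifting over $\{\lambda\neq0\}\subset\overline{\mathcal O_{\min}}$ is a legitimate alternative to the paper's count $\dim X=9-3$. One further caution: primality of the rank $\le1$ ideal in $\mathfrak{gl}_4$ does not by itself show that adding the trace keeps the ideal radical. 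That needs the usual Cohen--Macaulay plus generically-reduced argument, which the paper likewise takes as classical.
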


\begin{proof}
By the first fundamental theorem for $\SL_2$, in the form stated in Theorem~\ref{k=2 generators} and recalled from \cite[Theorem~4.4.4]{DerksenKemperCIT2}, the invariant ring $\C[S]^{\SL_2}$ is generated by the entries of $C=AB$, the signed minors $p$ and $q$, and $\lambda=\frac{1}{2}\tr(C)$. The identities \eqref{relations23} follow directly from $BA=\lambda\Id_2$, the definitions of $p$, $q$, and $C$, and the Cauchy--Binet formula.

Let
\[
Y:=\{M\in\mathfrak{sl}_4:\operatorname{rk}(M)\leq1\}.
\]
We first show that the invariant map $X\rightarrow\mathfrak{sl}_4$ defined by \eqref{nilpmatrixsl4} lands in $Y$. On the locus where $A$ has full rank, we have $p\neq0$ and $\operatorname{Im}(A)=\ker(p)$. Since
\[
(C-\lambda\Id_3)A=(AB-\lambda\Id_3)A=A(BA-\lambda\Id_2)=0,
\]
there is a unique column vector $u$ such that $C-\lambda\Id_3=up$. The relations $pC=0$ and $qp=\Adj(C)$ then give $pu=-\lambda$ and $q=-\lambda u$. Consequently,
\[
M(\lambda,C,p,q)
=
\begin{pmatrix}
1\\
u
\end{pmatrix}
\begin{pmatrix}
\lambda&p
\end{pmatrix},
\]
so $M(\lambda,C,p,q)$ has rank at most $1$.

The full-rank locus for $A$ is dense in $S$. Indeed, it is irreducible of dimension $9$, while the locus where $\operatorname{rk}(A)\leq1$ has dimension at most $8$. It follows that all $2\times2$ minors of $M(\lambda,C,p,q)$ vanish identically on $S$. Moreover,
\[
\tr(M(\lambda,C,p,q))=\tr(C)-2\lambda=0.
\]
Thus the invariant map induces a closed immersion $X\hookrightarrow Y$, since $\lambda$, the entries of $p$ and $q$, and the entries of $C$ generate $\C[S]^{\SL_2}$.

The $\SL_2$-action is free on the locus where $A$ has full rank, so $\dim X=9-3=6$. Recall the exceptional isomorphism $\mathfrak{so}(6)\cong\mathfrak{sl}_4$. The variety $Y$ is irreducible of dimension $6$ and is the closure of the minimal nilpotent orbit in $\mathfrak{so}_6$; see \cite[Proposition~3.5]{SlipperSchrodinger}. Hence the closed immersion $X\hookrightarrow Y$ is an isomorphism. Therefore the full ideal of relations is generated by $2\lambda-\tr(C)$ and the $2\times2$ minors of $M(\lambda,C,p,q)$.
\end{proof}

The variety $X$ is singular at the origin; in particular, it is not isomorphic to $\A^6$, which is the Hamiltonian reduction for the quiver \eqref{basicquiversl3}. We now explain how the open part relevant to the reflection functor recovers $\A^6$ after affinization.

Let $\mathcal U\subset S$ be the locus where $A$ has full rank, equivalently where $p\neq0$. This locus is $\SL_2$-stable, and the $\SL_2$-action on $\mathcal U$ is free. Its complement has codimension $1$ in $S$. Indeed, if $\operatorname{rk}(A)\leq1$, then $BA=\lambda\Id_2$ forces $\lambda=0$, and hence $BA=0$. The rank-one locus for $A$ has dimension $4$, and, for each rank-one $A$, the space of matrices $B$ satisfying $BA=0$ has dimension $4$. This gives an $8$-dimensional stratum. The remaining stratum, where $A=0$, has dimension $6$. Thus $S\setminus\mathcal U$ has dimension $8$, and hence codimension $1$ in the $9$-dimensional shell $S$.

\begin{prop}\label{openquotientprop}
The geometric quotient $\mathcal U/\SL_2$ is isomorphic to $(\A^3\setminus\{0\})\times\A^3$. It is a dense open subset of $X$, and its complement has codimension $1$.
\end{prop}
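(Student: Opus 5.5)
Three claims need proof: $\mathcal U/\SL_2\cong(\A^3\setminus\{0\})\times\A^3$, $\mathcal U/\SL_2$ is a dense open subset of $X$, and its complement has codimension $1$. The quickest route to the first is through the DKS isomorphism. For the quiver $(2)-[3]$ we have $\mathcal U=m_{\mathrm{inj}}^{-1}(0)$, so Theorem~\ref{DKS quiver variety} gives $\mathcal U/\SL_2\cong T^*(\SL_3/[P,P])$, where $P$ is the $2+1$ parabolic. By \eqref{SL3 row map}, $\SL_3/[P,P]\cong\A^3\setminus\{0\}$, which is an open subset of affine space. Its cotangent bundle is therefore trivial, and $T^*(\SL_3/[P,P])\cong(\A^3\setminus\{0\})\times\A^3$.

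To make this concrete in the invariant coordinates of Proposition~\ref{nilporboprop}, I would use the factorization from its proof. On $\mathcal U$ we have $p\neq0$, and there is a unique $u$ with $C-\lambda\Id_3=up$, subject to $pu=-\lambda$ and $q=-\lambda u$. Define $\mathcal U/\SL_2\to(\A^3\setminus\{0\})\times\A^3$ by $[(A,B)]\mapsto(p,\xi)$, where $\xi$ is the cotangent fiber coordinate: the unique covector with $\xi\cdot dp = \Tr(B\,dA)$ on $T_A$ modulo the $\mathfrak{sl}_2$ directions. Equivalently, choose $\xi$ so that $B$ is recovered from $(A,\xi)$. The inverse comes from choosing a local section $p\mapsto A(p)$ with $p(A(p))=p$; the proof of Proposition~\ref{oriented basic affine} provides such sections, and $\SL_2$ acts simply transitively on the fibers. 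I would not compute this in coordinates. I would rely on the DKS identification and record that the map is $\SL_3$-equivariant.

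Next, density and openness in $X$. The quotient map $\pi:S\to X$ is a categorical quotient, and $\mathcal U=\{p\neq0\}$ is the preimage of the $\SL_2$-stable set where the invariant functions $p$ do not all vanish. Hence $\pi(\mathcal U)=\{p\neq0\}\subset X$ is open. Since $\SL_2$ acts freely on $\mathcal U$ with closed orbits, every fiber of $\pi$ over $\{p\neq0\}$ is a single free closed orbit. This is the step I expect to be the main obstacle: one has to rule out a second orbit lying over the same point. I would handle it by showing that on $p\neq0$ the invariants determine $A$ up to $\SL_2$, since $p$ determines $\operatorname{Im}A$ together with a volume form on it, and $B$ is then determined by $C$ and $\lambda$ via $B=A^{-1}(C)$ on $\operatorname{Im}A$. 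It follows that $\{p\neq0\}\subset X$ is the geometric quotient $\mathcal U/\SL_2$. Because $X$ is irreducible (it is the closure of a nilpotent orbit), this nonempty open subset is dense.

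Finally, codimension. The complement is $\{p=0\}\cap X$. In the model $Y=\{M\in\mathfrak{sl}_4:\operatorname{rk}M\le1\}$ this is the set of rank-$\le1$ matrices $M=xy^{T}$ with $y^{T}x=0$ whose first row, apart from the $(1,1)$ entry, vanishes. Since $\lambda^2=pq$, the condition $p=0$ forces $\lambda=0$, so the entire first row vanishes: either $x_1=0$ or $y=0$. The locus $\{x_1=0,\ y^{T}x=0\}$ has dimension $6-1=5$. Hence the complement is a $5$-dimensional subvariety of the $6$-dimensional variety $X$, and its codimension is $1$, as claimed. This matches the earlier observation that $S\setminus\mathcal U$ has dimension $8$, since $8-3=5$.
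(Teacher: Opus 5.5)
Your proposal is correct. The second and third parts match the paper's proof closely. The paper also shows that the fibers of $S\to X$ over $X_{p\neq0}$ are single $\SL_2$-orbits: the columns of $A$ and $A'$ are volume-compatible bases of $\ker p$, and $B$ is recovered from $AB$ because $A$ is injective. It then identifies the complement with $\{(-q\ C)\in\Mat_{3\times4}:\operatorname{rk}\le1,\ \tr C=0\}$, which is exactly your locus $\{xy^{T}:x_1=0,\ y^{T}x=0\}$, of dimension $5$.

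The real difference is in the first claim. You obtain $\mathcal U/\SL_2\cong(\A^3\setminus\{0\})\times\A^3$ abstractly, by noting $\mathcal U=m^{-1}_{\mathrm{inj}}(0)$ for $(2)-[3]$ and invoking Theorem~\ref{DKS quiver variety}. The paper never uses DKS here. It writes down $j(p,u)=(\lambda,C,p,q)$ with $\lambda=-pu$, $q=-\lambda u$, $C=up+\lambda\Id_3$. It checks through the rank-one factorization $M=\binom{1}{u}(\lambda\ p)$ that $j$ lands in $X_{p\neq0}$, and inverts it on the patches $p_r\neq0$ by $u_a=(C_{ar}-\lambda\delta_{ar})/p_r$.

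What each route buys:
\begin{itemize}
\item Your route is shorter. It explains conceptually why the open part is a cotangent bundle, and it gives $\SL_3$-equivariance for free. However, it leaves the fiber coordinate only implicitly defined.
\item The paper's route is self-contained and produces the explicit coordinate $u$, which is your $\xi$ up to sign. These coordinates $(p,u)$ are what Proposition~\ref{reflectionfunctorsl3prop} later uses to identify $p=-V$ and $u=U$ on the zipper, and so to write the reflection functor as an explicit coordinate swap.
\end{itemize}

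One small point to add on your route: \eqref{SL3 row map} is only stated to have image $\A^3\setminus\{0\}$. You should note that it is an isomorphism onto that image. The stabilizer of $e_3^{T}$ under $v\mapsto vg^{-1}$ is exactly $[P,P]$, and orbit maps are isomorphisms in characteristic $0$.
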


\begin{proof}
Let $X_{p\neq0}\subset X$ be the open locus where $p\neq0$. Define
\begin{equation}\label{open quotient parametrization}
j:(\A^3\setminus\{0\})\times\A^3\rightarrow X,\quad
(p,u)\longmapsto(\lambda,C, p,q),
\end{equation}
where
\begin{equation}\label{lambda,C,p,q}
\lambda=-pu,\quad q=-\lambda u,\quad C=up+\lambda\Id_3.
\end{equation}
Under the identification in \eqref{nilpmatrixsl4}, we have
\[
\begin{pmatrix}
\lambda & p\\
-q & C-\lambda\Id_3
\end{pmatrix}
=
\begin{pmatrix}
1\\
u
\end{pmatrix}
\begin{pmatrix}
\lambda & p
\end{pmatrix}.
\]
This matrix has rank at most $1$ and trace $\lambda+pu=0$, so $j$ lands in $X_{p\neq0}$.

Conversely, suppose that $(\lambda,p,q,C)\in X_{p\neq0}$. The matrix in \eqref{nilpmatrixsl4} has rank at most $1$, and its first row $(\lambda,p)$ is nonzero. Hence there is a unique column vector $u$ such that
\[
C-\lambda\Id_3=up,\quad q=-\lambda u.
\]
The trace equation gives $\lambda+pu=0$, so $\lambda=-pu$. On the patch $p_r\neq0$, the entries of $u$ are given by
\[
u_a=\frac{C_{ar}-\lambda\delta_{ar}}{p_r},
\]
so the inverse is regular. Thus $j$ is an isomorphism onto $X_{p\neq0}$.

The quotient map $S\rightarrow X$ restricts to a surjective morphism $\mathcal U\rightarrow X_{p\neq0}$, since $\mathcal U$ is precisely the inverse image of $X_{p\neq0}$. Its fibers are single $\SL_2$-orbits. Indeed, suppose that $(A,B),(A',B')\in\mathcal U$ have the same invariants. Since $p(A)=p(A')\neq0$, the columns of $A$ and $A'$ are volume-compatible bases of the same plane $\ker(p)$, so there is a unique $g\in\SL_2$ such that $A'=Ag^{-1}$. The equality $AB=A'B'$ and the injectivity of $A$ then give $B'=gB$. Therefore
\[
\mathcal U/\SL_2\cong X_{p\neq0}\cong(\A^3\setminus\{0\})\times\A^3.
\]

Finally, under the realization \eqref{nilpmatrixsl4}, $X\setminus X_{p\neq0}$ is the locus $p=0$. The rank-at-most-one and trace-zero conditions then force $\lambda=0$, so this complement is identified with
\[
\left\{
\begin{pmatrix}
-q & C
\end{pmatrix}\in\Mat_{3\times4}:
\operatorname{rk}\begin{pmatrix}-q&C\end{pmatrix}\leq1,\quad
\tr(C)=0
\right\}.
\]
The rank-at-most-one cone in $\Mat_{3\times4}$ has dimension $6$, and the equation $\tr(C)=0$ cuts out a subvariety of dimension $5$. Since $\dim X=6$, the complement of $\mathcal U/\SL_2$ in $X$ has codimension $1$.
\end{proof}

\begin{remark}
\textup{The complement of $\mathcal U/\SL_2\subset X=S/\!/\SL_2$ has codimension $1$, so a Hartogs-type argument does not recover $X$ from this open subset, even though both $X$ and $\A^6=\Spec\,\Gamma(\mathcal U/\SL_2,\mathcal O)$ are normal. Indeed, $\mathcal U/\SL_2\cong(\A^3\setminus\{0\})\times\A^3$ is an open subset of $\A^6$ whose complement has codimension $3$, while it is also the open subset $X_{p\neq0}\subset X$, whose complement has codimension $1$. Thus $X$ and $\A^6$ are not isomorphic. This explains why the naive reflection functor in Wang's sense cannot give an isomorphism between these affine Hamiltonian reductions, even if one surmounts the normality issues for $\SL$-gauge Hamiltonian reductions.}
\end{remark}

Finally, we explicitly describe how our reflection functor works in this example. The reflection is from the quiver $Q=(1)-[3]$ to the quiver $Q'=(2)-[3]$. For the block sizes $1$ and $2$, Equation~\ref{torsion choice} gives torsion $-1$.\footnote{Since there are 2 blocks, there are no non-trivial braid relations. Hence any choice of nonzero torsion works, but we stick with our convention of Equation \ref{torsion choice}.} The injectivity conditions are $U\neq0$ on the $Q$ side and $\operatorname{rk}(A)=2$ on the $Q'$ side. The additional condition defining $\Lambda_{P,1}^{\mathrm{surj}}$ is $V\neq0$. The zipper diagram is
\begin{equation}\label{SL3 zipper diagram} 
\begin{tikzcd}
\overset{1}{\bullet} \arrow[rd, "U", shift left] &                                                                              \\
                                                 & \overset{3}{\bullet}  \arrow[lu, "V", shift left] \arrow[ld, "B", shift left] \\
\overset{2}{\bullet} \arrow[ru, "A", shift left] &                                                                             
\end{tikzcd}
\end{equation}
Condition (C2) is the short exact sequence
\begin{equation*}
0\longrightarrow\C^2\xrightarrow{A}\C^3\xrightarrow{V}\C\longrightarrow0.
\end{equation*}
With the signed-minor convention in \eqref{pqcoords}, this sequence having torsion $-1$ is precisely $V=-p(A)$. Thus, $V\neq0$ implies that $\operatorname{rk}(A)=2$, while the minor identities give $VA=0$; hence $\operatorname{Im}(A)=\ker(V)$.

We continue to use the scalar $\lambda$ defined by $BA=\lambda\Id_2$. The moment-map equation on the $Q$ side is then $VU=-\lambda$, and condition (C3) is $AB=UV+\lambda\Id_3$. Thus the zipper variety $Z$ is defined by
\begin{equation}\label{SL3 zipper equations}
U\neq0,\quad V\neq0,\quad V=-p(A),\quad VU=-\lambda,\quad BA=\lambda\Id_2,\quad AB=UV+\lambda\Id_3.
\end{equation}

In this case, Theorem~\ref{classical reflection functors} gives us:

\begin{prop}\label{reflectionfunctorsl3prop}
The projection
\[
\pi_{\mathrm{tar}}:Z\rightarrow\{(A,B)\in\mathcal U:B\neq0\},\quad
(U,V,A,B)\longmapsto(A,B)
\]
is an isomorphism. The projection
\[
\pi_{\mathrm{src}}:Z\rightarrow
(\A^3\setminus\{0\})\times((\A^3)^*\setminus\{0\}),\quad
(U,V,A,B)\longmapsto(U,V)
\]
is a principal $\SL_2$-bundle. Consequently, the zipper induces an isomorphism
\[
\Phi:(\A^3\setminus\{0\})\times((\A^3)^*\setminus\{0\})
\xrightarrow{\sim}
\{(A,B)\in\mathcal U:B\neq0\}/\SL_2.
\]
Under the isomorphism of Proposition~\ref{openquotientprop}, the target is the locus where $p\neq0$ and $u\neq0$. Its complement in $\A^6$ has codimension $3$, as does the complement of the source. Hence both sides have affinization $\A^6$.
\end{prop}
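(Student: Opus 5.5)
The plan is to work directly with the defining equations \eqref{SL3 zipper equations} and treat the two projections separately. For $\pi_{\mathrm{tar}}$ we construct an inverse. Start with $(A,B)\in\mathcal U$ and $B\neq0$. The torsion condition gives $V=-p(A)$, and this is nonzero since $\operatorname{rk}(A)=2$. Put $\lambda:=\tfrac12\tr(BA)$, so that $BA=\lambda\Id_2$.

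To recover $U$ we use the map $u$ from the proof of Proposition~\ref{openquotientprop}. Since $(AB-\lambda\Id_3)A=0$ and $\operatorname{Im}(A)=\ker(p)$, there is a unique $u$ with $AB-\lambda\Id_3=u\,p$. We then set $U:=-u$. This gives
\[
UV=(-u)(-p)=up=AB-\lambda\Id_3,
\]
which is (C3). The relation $VU=pu=-\lambda$ is the trace identity $\lambda+pu=0$ established there.

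It remains to check $U\neq0$. If $U=0$, then $\lambda=0$ and $AB=0$. Because $A$ is injective, this forces $B=0$, contradicting $B\neq0$. The formula $u_a=(C_{ar}-\lambda\delta_{ar})/p_r$ holds on each patch $p_r\neq0$, so the inverse is regular. Therefore $\pi_{\mathrm{tar}}$ is an isomorphism.

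For $\pi_{\mathrm{src}}$ we apply Proposition~\ref{zipper torsor} to the $1$-swapped pair $((1)-[3],(2)-[3])$, with the dictionary $\alpha_1=U$, $\beta_1=V$ on the source and $\alpha_1'=A$, $\beta_1'=B$ on the target. Here $m_{\mathrm{inj}}^{-1}(0)$ on the source is $\{U\neq0\}$ and $\Lambda^{\mathrm{surj}}$ is $\{V\neq0\}$, so $Z\to(\A^3\setminus0)\times((\A^3)^*\setminus0)$ is a principal $\SL_{d_1'}=\SL_2$-bundle. Concretely, $A$ ranges over volume-normalized bases of $\ker V$, which form an $\SL_2$-torsor, and $B$ is then determined by (C3).

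One point needs checking: the $(A,B)$ produced this way has $B\neq0$, so that it lands in the stated target. If $B=0$, then (C3) gives $UV=-\lambda\Id_3$, which has rank at most $1$. This forces $\lambda=0$ and so $UV=0$, which is impossible when $U,V\neq0$. Taking $\SL_2$-quotients, the map $\pi_{\mathrm{tar}}$ is $\SL_2$-equivariant with the action on $(U,V)$ trivial. Combining the two statements, we get $\Phi=\pi_{\mathrm{tar}}\circ\pi_{\mathrm{src}}^{-1}$ modulo $\SL_2$, which is an isomorphism onto $\{(A,B)\in\mathcal U:B\ne0\}/\SL_2$.

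Finally, we identify the target under Proposition~\ref{openquotientprop}. Since $B\neq0$ is equivalent to $u\neq0$ (by the argument above, $u=0$ if and only if $AB=0$, if and only if $B=0$), the target is $\{p\neq0,u\neq0\}\subset\A^3\times\A^3$. Its complement in $\A^6$ is the union of two codimension-$3$ linear subspaces, and the same holds for the source. Hartogs' lemma on the normal variety $\A^6$ then shows that both affinizations are $\A^6$.

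The main obstacle is sign bookkeeping: matching the torsion $-1$ convention with $V=-p(A)$ and obtaining $U=-u$ with consistent signs in $VU=-\lambda$. I would verify this first on the standard point $A=\alpha^\circ$.
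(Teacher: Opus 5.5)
Your proof is correct and follows the paper's route. The paper obtains this proposition by specializing the zipper construction (Theorem~\ref{classical reflection functors}, via Proposition~\ref{zipper torsor}) and reading off $p=-V$ and $u$ from (C3). Your explicit inverse for $\pi_{\mathrm{tar}}$ is the right way to pin down the image $\{B\neq0\}$: quoting Proposition~\ref{zipper torsor} literally (a principal $\SL_{d_1}=\SL_1$-bundle onto $\Lambda^{\mathrm{inj}}_{P',1}=\mathcal U$) would wrongly include the points $(A,0)$, and these force $U=0$.

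Your sign $U=-u$ is also the correct one. The paper's subsequent remark ``$u=U$'' is incompatible with $VU=-\lambda$ and with its own formulas $\psi(\lambda)=-VU$ and $\psi(q)=-(VU)U$. So the displayed affinized map should read $(U,V)\mapsto(p,u)=(-V,-U)$.
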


In coordinates, Proposition~\ref{openquotientprop} identifies $\mathcal U/\SL_2$ with coordinates $(p,u)$ satisfying Equation \ref{lambda,C,p,q}. On the zipper, $p=-V$, and the identity $C-\lambda\Id_3=UV$ gives $u=U$. We write $\Phi$ for the intertwiner on varieties and $\psi$ for the intertwiner on coordinate rings. Then the coordinate-ring intertwiner is
\begin{equation*}
\psi(p)=-V,\quad \psi(u)= U.
\end{equation*}
After affinization, the corresponding reflection functor is
\begin{equation*}
\Phi:\A^6\longrightarrow\A^6,\quad
(U_1,U_2,U_3,V_1,V_2,V_3)
\longmapsto
(-V_1,-V_2,-V_3,U_1,U_2,U_3).
\end{equation*}
Equivalently, on the invariant generators of the target Hamiltonian reduction,
\begin{equation*}
\psi(\lambda)=-VU,\quad
\psi(p)=-V,\quad
\psi(q)=-(VU)U,\quad
\psi(C)=UV-(VU)\Id_3.
\end{equation*}

Its inverse is given by the same coordinate swap:
\begin{equation*}
\Phi^{-1}:\A^6\longrightarrow\A^6,\quad
(p_1,p_2,p_3,u_1,u_2,u_3)
\longmapsto
(u_1,u_2,u_3,-p_1,-p_2,-p_3).
\end{equation*}
Equivalently, the inverse coordinate-ring map satisfies
\begin{equation*}
\psi^{-1}(U)=u,\quad \psi^{-1}(V)=-p.
\end{equation*}

\subsection{The Quadric Cone and Phantom Cones}\label{quadricconex}

We now return to the three-block parabolics promised in Section~\ref{sectionNonIso}. Let $n\geq 3$, and let $P=P_{1,n-2,1}\subset\SL_n$ be the standard parabolic with block sizes $1+(n-2)+1$. We write the split quadric cone as
\[
C=\{(x_1,\ldots,x_n;y_1,\ldots,y_n)\in\A^{2n}:x_1y_n+x_2y_{n-1}+\cdots+x_ny_1=0\}.
\]

\begin{prop}\label{quadric cone as base affine}
The affine closure $\overline{\SL_n/[P,P]}^{\mathrm{aff}}$ is isomorphic to the quadric cone $C$.
\end{prop}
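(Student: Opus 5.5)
We use Corollary~\ref{generalized base affine quiver} and Theorem~\ref{zariski tangent space}. For block sizes $(1,n-2,1)$ the dimension vector is $\mathbf v=(d_1,d_2)=(1,n-1)$ with framing $n$. By Corollary~\ref{generalized base affine quiver},
\[
\overline{\SL_n/[P,P]}^{\mathrm{aff}}\simeq V/\!/\SL_{\mathbf v},\qquad V=\Hom(\C^1,\C^{n-1})\oplus\Hom(\C^{n-1},\C^n).
\]
Since $\SL_1$ is trivial, only $\SL_{n-1}$ acts. Write $A_1\in\Hom(\C^1,\C^{n-1})$ and $A_2\in\Hom(\C^{n-1},\C^n)$.

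By Theorem~\ref{zariski tangent space}, the invariant ring is generated by two families of functions:
\begin{itemize}
\item the $n$ entries $x=A_2A_1\in\C^n$, which are the $1\times1$ minors of $A_2A_1$;
\item the $n$ maximal minors of $A_2$, which form a vector $\wedge^{n-1}A_2\in\wedge^{n-1}\C^n\simeq(\C^n)^*$.
\end{itemize}
Identify $\wedge^{n-1}\C^n$ with $(\C^n)^*$ by wedging with a vector and pairing against the standard volume form. Call the resulting covector $y$, with coordinates ordered and signed so that the pairing reads $\sum_i x_iy_{n+1-i}$. This gives a morphism
\[
\phi:V/\!/\SL_{n-1}\rightarrow\A^{2n},\qquad \phi \text{ a closed immersion, since the $2n$ functions generate.}
\]

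\textbf{The image lies in $C$.} We have $\langle y,x\rangle=\det(A_2A_1\,|\,A_2)$ up to sign, because the columns of $A_2A_1$ lie in the image of $A_2$, which has dimension at most $n-1$. Hence this determinant vanishes identically. So the image of $\phi$ lies in $C$, and after fixing signs the relation is exactly $x_1y_n+\cdots+x_ny_1=0$.

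\textbf{The image is all of $C$.} Both spaces are irreducible of the same dimension:
\begin{itemize}
\item $C$ is irreducible for $n\ge 2$, being a nondegenerate quadric in at least $4$ variables, and has dimension $2n-1$;
\item $\dim\SL_n/[P,P]=\dim\SL_n-\dim[P,P]$. Here $[P,P]$ has dimension $\dim U_P+\dim \SL_{n-2}=(2n-3)+(n-2)^2-1$, so the difference is $n^2-1-(2n-3)-(n^2-4n+3)=2n-1$.
\end{itemize}
A closed irreducible subvariety of $C$ of the same dimension is all of $C$, so $\phi$ is an isomorphism onto $C$.

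Alternatively, surjectivity can be shown directly. Given $(x,y)\in C$ with $y\ne0$, choose $A_2$ injective with image $\ker y$ and minors $y$; this is possible by scaling, since maximal minors scale by the determinant. Then $x\in\ker y$ lifts uniquely to $A_1$. This gives the dense open set $\{y\neq0\}\subset C$ in the image, and the image is closed.

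The main obstacle is only sign and normalization bookkeeping: the identification $\wedge^{n-1}\C^n\simeq(\C^n)^*$ must produce exactly the anti-diagonal form $\sum x_iy_{n+1-i}$. This is achieved by a linear change of coordinates, which we would make explicit via Laplace expansion of $\det(x\,|\,A_2)$ along its first column.
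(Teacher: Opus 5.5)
Your argument is correct, but it takes a different route from the paper. The paper does not use the quiver here. It takes $x$ to be the first column of $g$ and $y$ the last row of $g^{-1}$, and reads off the quadric relation from $g^{-1}g=\Id_n$. It then checks by hand that $\SL_n/[P,P]\to C$ is an isomorphism onto the open set $C'=\{x\neq0,\ y\neq0\}$. It concludes by Hartogs, using that $C$ is normal and that $C\setminus C'$ has codimension $n-1\geq2$.

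You instead combine Corollary~\ref{generalized base affine quiver} with the generation statement of Theorem~\ref{zariski tangent space}. For $\mathbf v=(1,n-1)$ that statement is just the first fundamental theorem for $\SL_{n-1}$ acting on one vector and $n$ covectors. This gives you a closed immersion $V/\!/\SL_{n-1}\hookrightarrow\A^{2n}$, with the relation coming from $\det(A_2A_1\,|\,A_2)=0$. Surjectivity onto $C$ then follows from irreducibility and dimension, or from your lifting over $\{y\neq0\}$.

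What your route buys: it avoids the normality of $C$ and any codimension estimate on $C$, and it presents $C$ directly as the $\SL_{\mathbf v}$-quotient, in the spirit of Section~\ref{sectionNonIso}. The price is importing classical invariant theory, and a Hartogs step still occurs inside Corollary~\ref{generalized base affine quiver}, just on the smooth space $V$ rather than on $C$.

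When you write it up, make two points explicit:
\begin{enumerate}
\item Both $\C[V]^{\SL_{n-1}}$ (a subring of a domain) and $C$ are reduced. This is what turns ``the image is all of $C$'' into an isomorphism.
\item Justify $\dim V/\!/\SL_{n-1}=2n-1$, e.g.\ because the quasi-affine $\SL_n/[P,P]$ is open in its affinization.
\end{enumerate}

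The paper's proof also gives the explicit identification $\SL_n/[P,P]\cong C'$, which is reused in the proof of Proposition~\ref{quadric cone cotangent minimal orbit}. In your setup this needs a short extra check. $V^\circ$ is exactly the preimage of $C'$. A refinement of your lifting argument (with uniqueness of $A_2$ up to $\SL_{n-1}$) shows that the fibers over $C'$ are single free $\SL_{n-1}$-orbits.
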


\begin{proof}
We explicitly describe the invariant functions on $\SL_n/[P,P]$. In block form,
\[
[P,P]=
\begin{pmatrix}
1 & * & *\\
0 & \SL_{n-2} & *\\
0 & 0 & 1
\end{pmatrix}.
\]
For $g\in\SL_n$, right multiplication by $[P,P]$ preserves the first column of $g$; call it $x=(x_1,\dots,x_n)^t$. Likewise, the last row of $g^{-1}$ is preserved; write it as $(y_n,\dots,y_1)$. Since $g^{-1}g=\Id_n$, the last row of $g^{-1}$ pairs trivially with the first column of $g$, so $x_1y_n+x_2y_{n-1}+\cdots+x_ny_1=0.$ Thus we obtain a morphism $\SL_n/[P,P]\rightarrow C$.

Let $C'\subset C$ be the open subset where both $x\neq0$ and $y\neq 0$. Over $C'$, the above map is an isomorphism: given nonzero $x$ and $y$ with $y(x)=0$, choose a basis of $\ker(y)$ whose first vector is $x$, extend it by a vector $e$ with $y(e)=1$, and normalize the determinant to obtain a matrix in $\SL_n$; changing these choices is exactly right multiplication by $[P,P]$. The complement $C\setminus C'$ is $\{x=0\}\cup\{y=0\}$, which has codimension $n-1\geq2$ in $C$. Since $C$ is normal, Hartogs' Lemma gives $\overline{\SL_n/[P,P]}^{\mathrm{aff}}\cong C$.
\end{proof}

\begin{prop}\label{quadric cone cotangent minimal orbit}
The affinization of $T^*\SL_n/[P,P]$ is isomorphic to the closure of the minimal nilpotent orbit of the orthogonal group $\O(Q^+)$, where $Q^+$ is the split quadratic form on $\C^{2n+2}$:
\[
Q^+(x_0,x_1,\ldots,x_n;y_1,\ldots,y_n,y_{n+1})
=
x_0y_{n+1}+x_1y_n+\cdots+x_ny_1.
\]
\end{prop}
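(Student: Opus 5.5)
The plan is to use the DKS description (Corollary~\ref{DKSAffinizedIso}) with the quiver $Q=(1)-(n-1)-[n]$, so $d_1=1$, $d_2=n-1$. The ring $\C[m_{\mathrm{inj}}^{-1}(0)]^{\SL_{\mathbf v}}$ is most easily accessed through the reflection functor $\Phi_1$, which swaps the first two blocks: the parabolic $P_{n-2,1,1}$ corresponds to $(n-2)-(n-1)-[n]$. A more direct alternative, which I would pursue first, is to write down explicit invariants on $T^*(\SL_n/[P,P])=\SL_n\times_{[P,P]}(\mathfrak u_P\oplus\mathfrak z(\mathfrak l))$ and show that they assemble into a rank-$\le 1$, $Q^+$-skew matrix in $\mathfrak{so}(Q^+)\cong\wedge^2\C^{2n+2}$. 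In other words, I would construct vectors $\xi,\eta\in\C^{2n+2}$ with $\xi\wedge\eta$ representing the point, in the same way that $(\lambda,p)$ and $(1,u)$ produced a rank-one matrix in Proposition~\ref{openquotientprop}.

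\textbf{Step 1: the moment map.} The $\SL_n\times L^{\mathrm{ab}}$-action on $T^*(\SL_n/[P,P])$ is Hamiltonian. Here $L^{\mathrm{ab}}$ is a $2$-torus, and the $\SL_n\times \C^{*2}$ acting on $C$ enlarges to the symmetry group $\O(Q^+_{2n})$ of the cone $C$ of Proposition~\ref{quadric cone as base affine}. I would show that $\SL_n/[P,P]$ is open, with complement of codimension $\ge 2$, in the $\O(2n)$-homogeneous cone $C\setminus\{0\}$. The nonzero isotropic vectors $(x,y)$ form an orbit of $\O(2n)\times\C^*$, so the affinization of $T^*(\SL_n/[P,P])$ agrees with that of $T^*(C^{\mathrm{reg}})$. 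I would then use the known fact, in the style of the Kobak--Swann or Brylinski--Kostant picture, that $T^*$ of the punctured isotropic cone in $\C^{2n}$ maps, via the $\mathfrak{so}(2n+2)$ moment map obtained by adding the two coordinates $x_0,y_{n+1}$, birationally onto $\overline{\mathcal O_{\min}}(\mathfrak{so}_{2n+2})$. Explicitly, for a point $(v,\phi)$ with $v=(x,y)\in C$ and covector $\phi$, set
\[
\xi=(1,\,v,\,-\tfrac12 Q(\phi)),\qquad \eta=(0,\,\phi,\,\langle \phi,v\rangle),
\]
with $\phi$ normalized using the Euler and isotropy directions. Then $\xi\wedge\eta$ is a rank-two nilpotent element of $\mathfrak{so}(Q^+)$ with $Q^+(\xi)=Q^+(\eta)=Q^+(\xi,\eta)=0$, i.e.\ a point of $\mathcal O_{\min}$.

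\textbf{Step 2: birationality and codimension.} I would check that this map is an open immersion from a dense open subset of $T^*(\SL_n/[P,P])$ onto a subset of $\overline{\mathcal O_{\min}}$ whose complement has codimension $\ge2$. Both sides have dimension $2\dim C=4n-2=\dim\mathcal O_{\min}(\mathfrak{so}_{2n+2})$, and the map is injective on the open locus, which is checked as in Proposition~\ref{openquotientprop}.

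\textbf{Step 3: conclusion.} The closure $\overline{\mathcal O_{\min}}$ is normal (Kostant--Brylinski), so Hartogs identifies $\Gamma$ of the open piece with $\C[\overline{\mathcal O_{\min}}]$. It remains to show that the affinization of $T^*(\SL_n/[P,P])$ equals $\Gamma$ of that open piece, i.e.\ that the removed locus in $T^*(\SL_n/[P,P])$ also has codimension $\ge2$; this holds because that locus lies over a codimension-$\ge2$ part of the base, or over $\{\eta\text{ degenerate}\}$.

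\textbf{Main obstacle.} I expect the hardest part to be the codimension bookkeeping on both sides: the earlier examples in the paper show that codimension-one complements do occur, so I would check carefully that the degeneracy locus $\xi\parallel\eta$ or $\eta=0$ has codimension $\ge 2$.
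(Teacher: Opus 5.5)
Your overall architecture matches the paper's. You identify $\SL_n/[P,P]$ with $C'=\{x\neq0,\ y\neq0\}\subset C^o$. You use that $C^o\setminus C'$ has codimension $n-1\geq 2$, together with Hartogs on the smooth variety $T^*C^o$, to replace $T^*C'$ by $T^*C^o$. You then identify $\Gamma(T^*C^o,\mathcal O)$ with $\C[\overline{\mathcal O}_{\min}]$. The paper takes this last identification from \cite[Proposition~3.4]{SlipperSchrodinger}; you try to prove it with an explicit map, and that map is wrong.

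With $Q^+=x_0y_{n+1}+Q$, your vectors satisfy $Q^+(\xi)=-\tfrac12Q(\phi)$ and $Q^+(\eta)=Q(\phi)$, so $\xi\wedge\eta\notin\overline{\mathcal O}_{\min}$ for general $\phi$. This is not a matter of constants. If $v$ sits in the vector with $x_0=1$, isotropy forces that vector to be $(1,v,0)$, and then isotropy of $(0,\phi,\ast)$ forces $Q(\phi)=0$. Also, $\phi$ is only defined modulo $\C v$ (the conormal line of $C$ at $v$ is $\C\,B(v,\cdot)$), and your formula is not invariant under $\phi\mapsto\phi+tv$.

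The normalization you allude to, choosing $t$ with $Q(\phi+tv)=0$, needs $t=-Q(\phi)/2B(v,\phi)$. So it exists only off the divisor $\{B(v,\phi)=0\}$, the zero locus of the Euler function. That divisor surjects onto the base, so the locus you would remove in Step~3 has codimension $1$ and Hartogs fails there. This is exactly the obstacle you flagged.

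The fix is to put the fiber coordinate, not the base point, in the vector with $x_0=1$:
\[ \xi=(0,\,v,\,-2B(v,\phi)),\qquad \eta=(1,\,\phi,\,-Q(\phi)), \]
where $B$ is the polar form of $Q$ and $\phi\in\C^{2n}/\C v\cong T_v^*C$. Then $Q^+(\xi)=Q^+(\eta)=B^+(\xi,\eta)=0$. Moreover $\phi\mapsto\phi+tv$ sends $\eta\mapsto\eta+t\xi$, so $X=\xi\wedge\eta$ is a regular map on all of $T^*C^o$ with no degeneracy locus ($\xi\neq0$ because $v\neq0$, and $x_0(\eta)=1$).

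Write $\iota_{x_0}$ for contraction with the coordinate functional $x_0$. Then $\xi=-\iota_{x_0}X$, and $\phi$ modulo $v$ is recovered from the component $v\wedge\phi\in\wedge^2\C^{2n}$. Hence the map is an isomorphism onto the open set $\{X\in\overline{\mathcal O}_{\min}:\iota_{x_0}X\neq0\}$. These are the bivectors whose isotropic plane is not contained in the degenerate hyperplane $\{x_0=0\}=e_\infty^\perp$, where $e_\infty$ is the $y_{n+1}$-basis vector.

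The codimension check that matters is therefore on the target, not the source. Isotropic planes inside $e_\infty^\perp$ not containing $e_\infty$ project to $OG(2,2n)$, of dimension $4n-7$, with a $2$-dimensional space of lifts. Those containing $e_\infty$ form a family of dimension $2n-2$. Against $\dim OG(2,2n+2)=4n-3$, the complement has codimension $2$ in $\overline{\mathcal O}_{\min}$.

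With normality of $\overline{\mathcal O}_{\min}$ and your codimension-$(n-1)$ step on the source, Hartogs on both sides then gives the proposition. This recovers precisely the input the paper cites.
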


\begin{proof}
By Proposition~\ref{quadric cone as base affine}, $\SL_n/[P,P]\cong C'$, where $C'$ is the locus of the quadric cone on which both $x$ and $y$ are nonzero. On the other hand, the affinization of $T^*C^o$, where $C^o=C\setminus\{0\}$, is the closure $\overline{\mathcal O}_{\min}^{D_{n+1}}$ of the minimal nilpotent orbit of $\O(Q^+)$; this is the quasi-classical $F$-moment-descent calculation of \cite[Proposition~3.4]{SlipperSchrodinger}. The complement $C^o\setminus C'$ has dimension $n$, while $C^o$ has dimension $2n-1$. Therefore $T^*C^o\setminus T^*C'$ has codimension $n-1\geq2$ in $T^*C^o$. Since $\overline{\mathcal O}_{\min}^{D_{n+1}}\setminus T^*C^o$ also has codimension at least $2$, Hartogs' Lemma gives
\[
\Gamma(T^*C',\mathcal O)=\Gamma(T^*C^o,\mathcal O)=\C[\overline{\mathcal O}_{\min}^{D_{n+1}}],
\]
and hence $\overline{T^*\SL_n/[P,P]}^{\mathrm{aff}}\cong\overline{\mathcal O}_{\min}^{D_{n+1}}$.
\end{proof}

\begin{remark}\label{phantom cone remark}
The same minimal nilpotent orbit closure also appears for the parabolics whose block sizes are the other permutations of $(1,1,n-2)$. Namely, our reflection functors give isomorphisms
\[
\overline{T^*(\SL_n/[P^{\sigma}_{1,1,n-2},P^{\sigma}_{1,1,n-2}])}^{\mathrm{aff}}
\cong
\overline{\mathcal O}_{\min}^{D_{n+1}},
\qquad \sigma\in S_3.
\]
Thus these spaces give six, not necessarily distinct, polarizations of the same minimal nilpotent orbit closure. Since two block sizes are equal, there are only three distinct standard parabolics, but the full $S_3$ still acts by reflection functors. When a permutation preserves the ordered block sizes, the corresponding reflection functor is an automorphism of the same affinized cotangent bundle.

For the middle parabolic $P_{1,n-2,1}$, the transposition exchanging the first and last blocks preserves the ordered block sizes. Under the identification of Proposition~\ref{quadric cone cotangent minimal orbit}, the resulting automorphism of $\overline{\mathcal O}_{\min}^{D_{n+1}}$ is the quasi-classical quadric Fourier transform. Equivalently, it is induced by conjugation by the Weyl element $w_0$ in the conformal group $\O(Q^+)$; this is the quasi-classical counterpart of the quadric Fourier transform on differential operators described in \cite[Section~3.7]{SlipperSchrodinger}.

The two extreme parabolics $P_{n-2,1,1}$ and $P_{1,1,n-2}$ induce isomorphic varieties $\SL_n/[P,P]$ via the inverse-transpose automorphism of $\SL_n$, followed by conjugation by the anti-diagonal permutation matrix. We call these two varieties the \textit{phantom cones}. Unlike the middle variety $\SL_n/[P_{1,n-2,1},P_{1,n-2,1}]$, which is a quadric hypersurface $C\subset\A^{2n}$, the phantom cones embed in
\[
\A^n\oplus\wedge^2\A^n\simeq\A^{n+\binom{n}{2}},
\]
with coordinates $(x,\omega)$, and their defining ideal is generated by the incidence quadrics $x\wedge\omega=0$, giving $\binom{n}{3}$ equations, together with the Pl\"ucker quadrics $\omega\wedge\omega=0$, giving $\binom{n}{4}$ equations. For $n\geq4$, these varieties are not complete intersections: they have dimension $2n-1$, hence codimension $\binom{n}{2}-n+1$ in $\A^{n+\binom{n}{2}}$, while the defining ideal already has $\binom{n}{3}+\binom{n}{4}$ independent minimal quadratic generators. In particular, (cf. Corollary~\ref{3 blocks}) for $n\geq4$, the phantom cones are not isomorphic to the middle quadric cone. Thus the middle quadric cone and the two phantom cones are distinct singular Lagrangian subvarieties which all live inside the same minimal nilpotent orbit closure.
\end{remark}

\begin{remark}\label{basic affine triality remark}
For $n=3$, the block sizes are $(1,1,1)$, so all six reflection functors are automorphisms of $\overline{T^*(\SL_3/U)}^{\mathrm{aff}}.$
This is precisely the quasi-classical Gelfand--Graev action in type $A_2$. Jia proves that $\overline{T^*(\SL_3/U)}^{\mathrm{aff}}$ is the closure of the minimal nilpotent orbit in $\mathfrak{so}_8$, and that the quasi-classical Gelfand--Graev $S_3$-action is the restriction of Cartan's triality action on $\mathfrak{so}_8$ \cite{JiaAffineClosureSLnU}. In this sense, the $n=3$ case, and the related triality symmetry, is a highly special case of the quadric-and-phantom-cone picture above.
\end{remark}

\begin{remark}\label{LSS quasiclassical remark}
This example is also the quasi-classical analogue of the phenomenon observed by Levasseur--Smith--Stafford. In Section~6.1 of \cite{LevasseurSmithStafford1989Joseph}, they exhibit two nonisomorphic singular affine varieties arising from components of $\overline{\mathcal O}_{\min}\cap\mathfrak n^+$ for $\mathfrak{so}(2n)$ whose rings of differential operators are nevertheless isomorphic. Their common differential-operator algebra is a Joseph quotient of $U(\mathfrak{so}(2n))$. In the present subsection, the cotangent affinizations of the corresponding Braverman--Kazhdan spaces are already isomorphic before quantization. Thus the reflection functors provide a quasi-classical explanation for why the corresponding rings of differential operators should fail to distinguish the underlying varieties.
\end{remark}

\subsection{Non-normal $\SL$-quiver varieties}\label{NonnormalQuiverSection}

We are grateful to Travis Schedler for communicating the following simple example. Consider the quiver $Q=(2)-[2]$. Let
\begin{equation*}
A=
\begin{pmatrix}
a & b\\
c & d
\end{pmatrix},
\qquad
B=
\begin{pmatrix}
e & f\\
g & h
\end{pmatrix}.
\end{equation*}
The moment map $m:T^*(\Hom(\C^2,\C^2))\rightarrow\mathfrak{sl}_2$ is the traceless part of $BA$. Hence $m^{-1}(0)=\{(A,B):BA=\lambda\Id_2\text{ for some }\lambda\in\C\}$. Set
\begin{equation*}
C=AB=
\begin{pmatrix}
ae+bg & af+bh\\
ce+dg & cf+dh
\end{pmatrix}
=:
\begin{pmatrix}
x&y\\
z&w
\end{pmatrix},
\qquad
\alpha=\det(A),
\qquad
\beta=\det(B).
\end{equation*}
By the first fundamental theorem for $\SL_2$, in the form stated in Theorem~\ref{k=2 generators} and recalled from \cite[Theorem~4.4.4]{DerksenKemperCIT2}, the invariant ring is generated by $x,y,z,w,\alpha,\beta$. Write $s=x+w$ and $u=x-w$. On $m^{-1}(0)$ we have $\lambda=\frac{s}{2}$ and
\begin{equation*}
C^2=\lambda C,
\qquad
\alpha\beta=\det(C)=\lambda^2.
\end{equation*}
Moreover, if $C_0:=C-\lambda\Id_2$, then $\alpha C_0=0$ and $\beta C_0=0$. Indeed, multiplying $BA=\lambda\Id_2$ by adjugate matrices gives $\alpha C=\lambda\alpha\Id_2$ and $\beta C=\lambda\beta\Id_2$. Therefore a straightforward elimination gives
\begin{equation*}
\C[m^{-1}(0)]^{\SL_2}
\simeq
\C[s,u,y,z,\alpha,\beta]/I,
\end{equation*}
where
\begin{equation*}
I=
(su,sy,sz,u^2+4yz,\alpha u,\alpha y,\alpha z,\beta u,\beta y,\beta z,4\alpha\beta-s^2)=(u,y,z,4\alpha\beta-s^2)\cap(s,\alpha,\beta,u^2+4yz).
\end{equation*}
Thus $m^{-1}(0)/\!/\SL_2$ is the union of the two quadric surface singularities
\begin{equation*}
V(u,y,z,4\alpha\beta-s^2)
\qquad\text{and}\qquad
V(s,\alpha,\beta,u^2+4yz),
\end{equation*}
which meet only at the origin. In particular, the local ring at the origin has zero divisors: the classes of $s$ and $u$ are nonzero, but $su=0$. Hence $m^{-1}(0)/\!/\SL_2$ is not normal.

Unfortunately, we cannot avoid this issue by restricting our attention to Type-A quivers with strictly increasing dimension vector either:

\begin{prop}\label{six-seven nonnormal}
Let $Q=(6)-[7]$. Then the affine Hamiltonian reduction $m^{-1}(0)/\!/\SL_6$ is not normal. In fact, its coordinate ring is not a domain.
\end{prop}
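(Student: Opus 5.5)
The plan is to exhibit two nonzero $\SL_6$-invariant functions on the shell whose product vanishes identically. Write $A\in\Hom(\C^6,\C^7)$ and $B\in\Hom(\C^7,\C^6)$, and let
\[
S:=\{(A,B):BA=\lambda\Id_6\text{ for some }\lambda\in\C\}=m^{-1}(0).
\]
Its invariant functions include $\lambda=\frac16\tr(AB)$, the entries of $C:=AB$, and the $6\times 6$ minors $p$ of $A$. A nonzero invariant polynomial restricted to $S$ is a nonzero element of $\C[S]^{\SL_6}=\C[m^{-1}(0)/\!/\SL_6]$. So it suffices to find invariants $f,g$ with $f|_S\neq0$, $g|_S\neq0$ and $fg|_S=0$. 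This shows the coordinate ring is not a domain, and hence the reduction is not normal, since a normal affine variety that is connected (it is a cone) is irreducible.

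\textbf{Step 1: split $S$ set-theoretically.} I would write $S=S_1\cup S_2$ with
\[
S_1:=\{(A,B)\in S:\operatorname{rk}A=6\},\qquad S_2:=\{(A,B):\operatorname{rk}A\leq5,\ BA=0\}.
\]
If $\operatorname{rk}A\le5$, choose $0\ne v\in\ker A$; then $\lambda v=BAv=0$, so $\lambda=0$ and $BA=0$. Hence every point of $S$ lies in $S_1$ or in $S_2$, and any function vanishing on $\overline{S_1}$ and on $S_2$ vanishes on $S$.

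A dimension count also shows $S_2$ is not contained in $\overline{S_1}$. We have $\dim S_1=42+(42-35)=49$. For $S_2$, rank-$5$ matrices $A$ form a $40$-dimensional family, and for each such $A$ the matrices $B$ with $BA=0$ form a $12$-dimensional space, so $\dim S_2\ge 52$. This is only motivation; the argument below uses explicit functions.

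\textbf{Step 2: an invariant vanishing on $S_1$ but not on $S_2$.} On $S_1$ we have $(C-\lambda\Id_7)A=A(BA-\lambda\Id_6)=0$, so $C-\lambda\Id_7$ kills $\operatorname{Im}A=\ker p$. Thus $C-\lambda\Id_7=up$ has rank $\le1$, exactly as in the proof of Proposition~\ref{nilporboprop}. Let $f$ be a suitable $2\times2$ minor of $C-\tfrac17\tr(C)\cdot\tfrac76\cdot\tfrac17\,$—more simply, of the invariant matrix $C-\frac{\tr(C)}{6}\Id_7$. Then $f$ vanishes on $S_1$, hence on $\overline{S_1}$.

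On $S_2$ we have $\tr C=\tr BA=0$, so $f$ is just a $2\times2$ minor of $C=AB$. I would take the explicit point $A=\begin{psmatrix}\Id_5&0\\0&0\end{psmatrix}$, meaning $A$ kills $e_6$ and is the identity onto $e_1,\dots,e_5$, and choose $B$ to vanish on $e_1,\dots,e_5$ with $Be_6=e_1$ and $Be_7=e_2$. Then $BA=0$, and $AB$ sends $e_6\mapsto e_1$ and $e_7\mapsto e_2$, so $AB$ has rank $2$. Choosing the minor $f$ in rows $\{1,2\}$ and columns $\{6,7\}$ gives $f\ne0$ at this point of $S\cap S_2$.

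\textbf{Step 3: an invariant vanishing on $S_2$ but not on $S_1$.} Take $g$ to be any $6\times6$ minor of $A$. Then $g=0$ on $S_2$, while $g\ne0$ at, say, $A=\begin{psmatrix}\Id_6\\0\end{psmatrix}$, $B=0$, which lies in $S_1$. By Step 1, $fg$ vanishes on $S=\overline{S_1}\cup S_2$, while $f$ and $g$ are individually nonzero on $S$. Therefore $\C[S]^{\SL_6}$ has zero divisors.

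The only step needing genuine care is the rank-one identity $C-\lambda\Id_7=up$ on $S_1$, which is the same argument as in the $(2)-[3]$ case. The explicit witness point in Step 2 is routine to verify.
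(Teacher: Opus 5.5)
Your proof is correct and is essentially the paper's argument: the paper pairs the same two invariants, the maximal minor $p_7$ of $A$ and a $2\times 2$ minor $h$ of $D=C-\lambda\Id_7$; the only difference is that it proves the vanishing algebraically, via the cofactor identity $p_7D_{ij}=D_{i7}p_j$ giving $p_7^2h=0$, whereas you use the pointwise split $\operatorname{rk}A=6$ versus $\operatorname{rk}A\le 5$. Two small fixes: clean up the garbled definition of $f$, and take $g$ to be the minor deleting the seventh row, since that is the minor your witness point $A=\begin{psmatrix}\Id_6\\0\end{psmatrix}$ actually makes nonzero.
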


\begin{proof}
Let $A:\C^6\rightarrow\C^7$ and $B:\C^7\rightarrow\C^6$. Then $m^{-1}(0)=\{(A,B):BA=\lambda\Id_6\text{ for some }\lambda\in\C\}.$
Set $C:=AB$ and $D:=C-\lambda\Id_7$. We have
\[
DA=(AB-\lambda\Id_7)A=A(BA-\lambda\Id_6)=0.
\]
For $1\leq r\leq7$, let $p_r$ be $(-1)^{r+1}$ times the $6\times6$ minor of $A$ obtained by deleting its $r$th row, and set $p=(p_1,\ldots,p_7)$. The cofactor identities give $pA=0$ and, together with $DA=0$,
\[
p_7D_{ij}=D_{i7}p_j,\quad 1\leq i,j\leq7.
\]
Set $h:=D_{13}D_{24}-D_{14}D_{23}$. It follows that
\[
p_7^2h=(D_{17}p_3)(D_{27}p_4)-(D_{17}p_4)(D_{27}p_3)=0\in \C[m^{-1}(0)]^{\SL_6}.
\]

Both $p_7$ and $h$ are $\SL_6$-invariant. If $A$ is the inclusion of the first six coordinate vectors into $\C^7$ and $B$ is the projection onto those coordinates, then $BA=\Id_6$ and $p_7=1$. On the other hand, take $\lambda=0$ and
\[
A(e_1)=e_1,\quad A(e_2)=e_2,\quad A(e_i)=0\text{ for }i\geq3,
\]
with $B(e_3)=e_1$, $B(e_4)=e_2$, and $B(e_i)=0$ otherwise. Then $BA=0$, while $D_{13}=D_{24}=1$ and $D_{14}=D_{23}=0$, so $h=1$. Thus $p_7^2$ and $h$ are nonzero invariant functions whose product is zero.

The invariant ring $\C[m^{-1}(0)]^{\SL_6}$ is a connected graded $\C$-algebra, so if it were normal, it would be a domain. Hence it is not normal.
\end{proof}

This contrasts with the corresponding Hamiltonian reductions for $\GL_{\mathbf{v}}$ gauge groups, which are normal by \cite{CrawleyBoevey2003Normality}.

\bibliography{refs}{}

@misc{SlipperSchrodinger,
  author        = {Slipper, Aaron},
  title         = {Geometrization of the {Schr\"odinger} model for the minimal representation of an even orthogonal group: The de {Rham} setting},
  year          = {2026},
  eprint        = {2604.11023},
  archivePrefix = {arXiv},
  primaryClass  = {math.RT},
  doi           = {10.48550/arXiv.2604.11023}
}

@incollection{LaumonEisenstein,
  author    = {Laumon, G{\'e}rard},
  title     = {Faisceaux automorphes li{\'e}s aux s{\'e}ries d'{Eisenstein}},
  booktitle = {Automorphic Forms, Shimura Varieties, and {$L$}-Functions, Vol. {I} ({Ann Arbor}, {MI}, 1988)},
  series    = {Perspectives in Mathematics},
  volume    = {10},
  pages     = {227--281},
  publisher = {Academic Press},
  address   = {Boston, MA},
  year      = {1990}
}

@article{BravermanGaitsgoryGeometricEisenstein,
  author  = {Braverman, Alexander and Gaitsgory, Dennis},
  title   = {Geometric {Eisenstein} series},
  journal = {Inventiones Mathematicae},
  volume  = {150},
  number  = {2},
  pages   = {287--384},
  year    = {2002},
  doi     = {10.1007/s00222-002-0237-8}
}

@misc{GannonWebsterFunctorialityCoulomb,
  author        = {Gannon, Tom and Webster, Ben},
  title         = {Functoriality of {Coulomb} branches},
  year          = {2025},
  eprint        = {2501.09962},
  archivePrefix = {arXiv},
  primaryClass  = {math.AG}
}

@article{BFGMDrinfeldCompactificationsIC,
  author  = {Braverman, A. and Finkelberg, M. and Gaitsgory, D. and Mirkovi{\'c}, I.},
  title   = {Intersection cohomology of {Drinfeld}'s compactifications},
  journal = {Selecta Mathematica. New Series},
  volume  = {8},
  number  = {3},
  pages   = {381--418},
  year    = {2002},
  doi     = {10.1007/s00029-002-8111-5}
}

@article{BenZviNevinsCuspsDModules,
  author  = {Ben-Zvi, David and Nevins, Thomas},
  title   = {Cusps and {D}-modules},
  journal = {Journal of the American Mathematical Society},
  volume  = {17},
  number  = {1},
  pages   = {155--179},
  year    = {2004},
  doi     = {10.1090/S0894-0347-03-00439-9}
}

@article{BFGMDrinfeldCompactificationsICErratum,
  author  = {Braverman, A. and Finkelberg, M. and Gaitsgory, D. and Mirkovi{\'c}, I.},
  title   = {Erratum to: Intersection cohomology of {Drinfeld}'s compactifications},
  journal = {Selecta Mathematica. New Series},
  volume  = {10},
  pages   = {429--430},
  year    = {2004},
  doi     = {10.1007/s00029-004-0383-5}
}

@incollection{PopovVinbergInvariantTheory,
  author    = {Popov, V. L. and Vinberg, E. B.},
  title     = {Invariant theory},
  booktitle = {Algebraic Geometry {IV}: Linear Algebraic Groups, Invariant Theory},
  editor    = {Parshin, A. N. and Shafarevich, I. R.},
  series    = {Encyclopaedia of Mathematical Sciences},
  volume    = {55},
  pages     = {123--278},
  publisher = {Springer-Verlag},
  address   = {Berlin},
  year      = {1994},
  doi       = {10.1007/978-3-662-03073-8}
}

@misc{GinzburgDerivedSatakeSymplecticDuality,
  author        = {Ginzburg, Victor},
  title         = {Pointwise purity, derived {Satake}, and {Symplectic} duality},
  year          = {2025},
  eprint        = {2508.15958},
  archivePrefix = {arXiv},
  primaryClass  = {math.RT}
}

@article{GannonWilliamsCoulomb,
  author        = {Gannon, Tom and Williams, Harold},
  title         = {Differential operators on the base affine space of {$\mathrm{SL}_{n}$} and quantized {Coulomb} branches},
  journal       = {International Mathematics Research Notices},
  volume        = {2026},
  number        = {2},
  pages         = {rnaf384},
  year          = {2026},
  doi           = {10.1093/imrn/rnaf384},
  eprint        = {2312.10278},
  archivePrefix = {arXiv},
  primaryClass  = {math.RT}
}

@article{GannonCotangentGUP,
  author        = {Gannon, Tom},
  title         = {The cotangent bundle of {$G/U_P$} and {Kostant--Whittaker} descent},
  journal       = {International Mathematics Research Notices},
  volume        = {2025},
  number        = {2},
  pages         = {rnae285},
  year          = {2025},
  doi           = {10.1093/imrn/rnae285},
  eprint        = {2407.16844},
  archivePrefix = {arXiv},
  primaryClass  = {math.RT}
}

@book{DerksenKemperCIT2,
  author    = {Derksen, Harm and Kemper, Gregor},
  title     = {Computational Invariant Theory},
  edition   = {Second enlarged edition},
  series    = {Encyclopaedia of Mathematical Sciences},
  volume    = {130},
  publisher = {Springer},
  address   = {Berlin},
  year      = {2015},
  doi       = {10.1007/978-3-662-48422-7},
  isbn      = {978-3-662-48420-3}
}

@misc{Hsu2026WeylAlgebrasBKSpaces,
  author        = {Hsu, Chun-Hsien},
  title         = {{Weyl} algebras on {Braverman--Kazhdan} spaces},
  year          = {2026},
  eprint        = {2606.01206},
  archivePrefix = {arXiv},
  primaryClass  = {math.RT}
}

@article{DancerKirwanSwann2013,
  author  = {Dancer, Andrew and Kirwan, Frances and Swann, Andrew Francis},
  title   = {Implosion for hyperk{\"a}hler manifolds},
  journal = {Compositio Mathematica},
  volume  = {149},
  number  = {9},
  pages   = {1592--1630},
  year    = {2013},
  doi     = {10.1112/S0010437X13007203}
}

@book{Turaev2002Torsions,
  author    = {Turaev, Vladimir},
  title     = {Torsions of 3-Dimensional Manifolds},
  series    = {Progress in Mathematics},
  volume    = {208},
  publisher = {Birkh{\"a}user},
  address   = {Basel},
  year      = {2002},
  isbn      = {978-3-7643-6918-6}
}

@article{Wang2021GelfandGraev,
  author  = {Wang, Xiangsheng},
  title   = {A new {Weyl} group action related to the quasi-classical {Gelfand--Graev} action},
  journal = {Selecta Mathematica. New Series},
  volume  = {27},
  number  = {3},
  pages   = {38},
  year    = {2021},
  doi     = {10.1007/s00029-021-00655-0}
}

@article{HerbigSchwarzSeaton2024,
  author  = {Herbig, Hans-Christian and Schwarz, Gerald W. and Seaton, Christopher},
  title   = {When does the zero fiber of the moment map have rational singularities?},
  journal = {Geometry \& Topology},
  volume  = {28},
  number  = {7},
  pages   = {3475--3510},
  year    = {2024},
  doi     = {10.2140/gt.2024.28.3475}
}

@article{Kuznetsov1997LaumonResolution,
  author  = {Kuznetsov, Alexander},
  title   = {Laumon's resolution of {Drinfeld}'s compactification is small},
  journal = {Mathematical Research Letters},
  volume  = {4},
  number  = {3},
  pages   = {349--364},
  year    = {1997},
  doi     = {10.4310/MRL.1997.v4.n3.a4}
}

@article{CrawleyBoevey2003Normality,
  author  = {Crawley-Boevey, William},
  title   = {Normality of {Marsden--Weinstein} reductions for representations of quivers},
  journal = {Mathematische Annalen},
  volume  = {325},
  number  = {1},
  pages   = {55--79},
  year    = {2003},
  doi     = {10.1007/s00208-002-0367-8}
}

@article{Lusztig2000QuiverWeyl,
  author  = {Lusztig, G.},
  title   = {Quiver varieties and {Weyl} group actions},
  journal = {Annales de l'Institut Fourier},
  volume  = {50},
  number  = {2},
  pages   = {461--489},
  year    = {2000},
  doi     = {10.5802/aif.1762}
}

@article{Maffei2002RemarkQuiverWeyl,
  author  = {Maffei, Andrea},
  title   = {A remark on quiver varieties and {Weyl} groups},
  journal = {Annali della Scuola Normale Superiore di Pisa - Classe di Scienze},
  series  = {5},
  volume  = {1},
  number  = {3},
  pages   = {649--686},
  year    = {2002},
  url     = {https://www.numdam.org/item/ASNSP_2002_5_1_3_649_0/}
}

@article{Nakajima2003ReflectionFunctors,
  author  = {Nakajima, Hiraku},
  title   = {Reflection functors for quiver varieties and {Weyl} group actions},
  journal = {Mathematische Annalen},
  volume  = {327},
  pages   = {671--721},
  year    = {2003},
  doi     = {10.1007/s00208-003-0467-0}
}

@article{GinzburgKazhdan2022,
  author  = {Ginzburg, Victor and Kazhdan, David},
  title   = {Differential operators on {G/U} and the {Gelfand--Graev} action},
  journal = {Advances in Mathematics},
  volume  = {403},
  pages   = {108368},
  year    = {2022},
  doi     = {10.1016/j.aim.2022.108368}
}

@article{GinzburgRiche2015,
  author  = {Ginzburg, Victor and Riche, Simon},
  title   = {Differential operators on {G/U} and the affine {Grassmannian}},
  journal = {Journal of the Institute of Mathematics of Jussieu},
  volume  = {14},
  number  = {3},
  pages   = {493--575},
  year    = {2015},
  doi     = {10.1017/S1474748014000085}
}

@book{GelfandGraevPiatetskiiShapiro1969,
  author    = {Gel'fand, I. M. and Graev, M. I. and Piatetskii-Shapiro, I. I.},
  title     = {Representation Theory and Automorphic Functions},
  series    = {Generalized Functions},
  volume    = {6},
  publisher = {W. B. Saunders Company},
  year      = {1969},
  note      = {English translation of the 1966 Russian original}
}

@article{JiaAffineClosureSLnU,
  author        = {Jia, Boming},
  title         = {The affine closure of {$T^*(\mathrm{SL}_n/U)$}},
  journal       = {Journal of Lie Theory},
  volume        = {35},
  number        = {1},
  pages         = {83--100},
  year          = {2025},
  eprint        = {2112.08649},
  archivePrefix = {arXiv},
  primaryClass  = {math.RT}
}

@incollection{BerestWilson2004Differential,
  author    = {Berest, Yuri and Wilson, George},
  title     = {Differential isomorphism and equivalence of algebraic varieties},
  booktitle = {Topology, Geometry and Quantum Field Theory},
  editor    = {Tillmann, Ulrike},
  series    = {London Mathematical Society Lecture Note Series},
  volume    = {308},
  pages     = {98--126},
  publisher = {Cambridge University Press},
  address   = {Cambridge},
  year      = {2004},
  doi       = {10.1017/CBO9780511526398.007}
}

@misc{BenZviSakellaridisVenkatesh2024,
  author        = {Ben-Zvi, David and Sakellaridis, Yiannis and Venkatesh, Akshay},
  title         = {Relative {Langlands} duality},
  year          = {2024},
  eprint        = {2409.04677},
  archivePrefix = {arXiv},
  primaryClass  = {math.RT},
  url           = {https://arxiv.org/abs/2409.04677}
}

@misc{GetzEtAl_ModulationGroups_2025,
  author        = {Getz, Jayce R. and Guti{\'e}rrez Terradillos, Armando and Hosseinijafari, Farid and Hu, Bryan and Lee, Seewoo and Slipper, Aaron and Tom{\'e}, Marie-H{\'e}l{\`e}ne and Yao, HaoYun and Zhao, Alan},
  title         = {Modulation groups},
  year          = {2025},
  month         = oct,
  eprint        = {2510.23932},
  archivePrefix = {arXiv},
  primaryClass  = {math.NT},
  doi           = {10.48550/arXiv.2510.23932},
  url           = {https://arxiv.org/abs/2510.23932},
  note          = {arXiv:2510.23932v3 (revised 9 Dec 2025)}
}

@article{LevasseurSmithStafford1989Joseph,
  author  = {Levasseur, Thierry and Smith, S. Paul and Stafford, J. Toby},
  title   = {The minimal nilpotent orbit, the {Joseph} ideal and differential operators},
  journal = {Journal of Algebra},
  volume  = {121},
  number  = {2},
  pages   = {338--358},
  year    = {1989}
}

@misc{TaoLocalBernsteinLebesgue,
  author        = {Tao, Terence},
  title         = {Local {Bernstein} theory, and lower bounds for {Lebesgue} constants},
  year          = {2026},
  eprint        = {2603.21453},
  archivePrefix = {arXiv},
  primaryClass  = {math.CA},
  doi           = {10.48550/arXiv.2603.21453}
}

@article{BK:basic:affine,
  author  = {Braverman, A. and Kazhdan, D.},
  title   = {On the {Schwartz} space of the basic affine space},
  journal = {Selecta Math. (N.S.)},
  volume  = {5},
  number  = {1},
  pages   = {1--28},
  year    = {1999},
  doi     = {10.1007/s000290050041}
}

@article{BK:normalized,
  author  = {Braverman, A. and Kazhdan, D.},
  title   = {Normalized intertwining operators and nilpotent elements in the {Langlands} dual group},
  journal = {Mosc. Math. J.},
  volume  = {2},
  number  = {3},
  pages   = {533--553},
  year    = {2002},
  note    = {Dedicated to Yuri I. Manin on the occasion of his 65th birthday},
  doi     = {10.17323/1609-4514-2002-2-3-533-553}
}

@preamble{
   "\def\polhk#1{\setbox0=\hbox{#1}{\ooalign{\hidewidth
    \lower1.5ex\hbox{`}\hidewidth\crcr\unhbox0}}} "
}

@article{Getz:Liu:BK,
  author        = {Getz, Jayce R. and Liu, Baiying},
  title         = {A refined {Poisson} summation formula for certain {Braverman--Kazhdan} spaces},
  journal       = {Science China Mathematics},
  volume        = {64},
  number        = {6},
  pages         = {1127--1156},
  year          = {2021},
  doi           = {10.1007/s11425-018-1616-0},
  eprint        = {1707.06091},
  archivePrefix = {arXiv},
  primaryClass  = {math.NT}
}

@article{Ngo:Hankel,
  author  = {Ng{\^o}, B. C.},
  title   = {Hankel transform, {Langlands} functoriality and functional equation of automorphic {$L$}-functions},
  journal = {Jpn. J. Math.},
  volume  = {15},
  number  = {1},
  pages   = {121--167},
  year    = {2020},
  doi     = {10.1007/s11537-019-1650-8}
}
\bibliographystyle{alpha}

\end{document}